\numberwithin{equation}{section}
\newtheorem{theorem}{Theorem}[section]
\newtheorem{lemma}[theorem]{Lemma}
\newtheorem{proposition}[theorem]{Proposition}
\newtheorem{corollary}[theorem]{Corollary}
\theoremstyle{definition}
\newtheorem{definition}[theorem]{Definition}
\newtheorem{assumption}[theorem]{Assumption}
\theoremstyle{remark}
\newtheorem{remark}[theorem]{Remark}
\newtheorem{remarks}[theorem]{Remarks}
\newtheorem{notation}[theorem]{Notation}
\titleformat{\section}{\large\bfseries\filcenter}{\thesection}{1em}{}
\titleformat{\subsection}{\bfseries}{\thesubsection}{1em}{}
\renewcommand{\epsilon}{{\varepsilon}}
\newcommand{\resp}{\textit{resp. }}
\newcommand{\ie}{\textit{i.e.~}}
\newcommand{\eg}{\textit{e.g.~}}
\newcommand{\cf}{\textit{cf. }}
\newcommand{\RR}{{\mathbb{R}}}
\newcommand{\CC}{{\mathbb{C}}}
\renewcommand{\hat}{\widehat}
\newcommand{\Dir}{{\mathsf{D}}}
\newcommand{\G}{\mathsf{G}}
\newcommand{\Id}{{\operatorname{Id}}}
\newcommand{\sol}{\mathsf{Sol}\,}
\newcommand{\E}{\mathsf{E}}
\newcommand{\M}{\mathsf{M}}
\renewcommand{\P}{\mathsf{P}}
\renewcommand{\S}{\mathbb{S}}
\newcommand{\T}{\mathsf{T}}
\newcommand{\f}{{\mathfrak{f}}}
\renewcommand{\aa}{{\mathfrak{a}}}
\newcommand{\scrH}{\mathscr{H}}
\newcommand{\vol}{{\textnormal{vol}\,}}
\newcommand{\supp}{{\textnormal{supp\,}}}
\newcommand{\Spin}{\textnormal{Spin}}
\newcommand{\define}{:=}
\newcommand{\rquot}[2]{\raisebox{0.5ex}{$#1$}\!/\!\raisebox{-0.5ex}{$#2$}}
\newcommand{\fiber}[2]{\prec  #1\,|\, #2  \succ}
\newcommand{\scalar}[2]{(#1\, |\, #2)}
\renewcommand{\tilde}{\widetilde}
\begin{document}
%
%
%
%
\thispagestyle{empty}
\begin{center}

\vspace{5mm}

{\LARGE\bf  Global and microlocal aspects  of Dirac operators:\\[4mm] propagators and Hadamard states} 

\vspace{5mm}

{\bf by}

\vspace{5mm}
 { \bf Matteo Capoferri}\\[1mm]
\noindent  
{\it Maxwell Institute for Mathematical Sciences, Edinburgh}\\[1mm]
{\it Department of Mathematics, Heriot-Watt University, Edinburgh EH14, UKK}\\[1mm]
email: \ {\tt M.Capoferri@hw.ac.uk}\;\;
webpage:\ \url{https://mcapoferri.com}
\\[6mm]
{  \bf Simone Murro}\\[1mm]
\noindent {\it Dipartimento di Matematica, Università di Genova and INFN, sezione di Genova, Italy}\\[1mm]
email: \ {\tt  murro@dima.unige.it} \;\; webpage:\ \url{https://www.simonemurro.eu}
\\[10mm]
\end{center}

\begin{abstract}
We propose a geometric approach to construct 
the Cauchy evolution operator for the Lorentzian 
Dirac operator on Cauchy-compact globally 
hyperbolic 4-manifolds. 
We realise the Cauchy evolution operator as 
the sum of two invariantly defined oscillatory 
integrals, the positive and negative Dirac propagators,
 global in space and in time, 
 with distinguished complex-valued geometric 
 phase functions. As applications, 
 we relate the Cauchy evolution operators 
 with the Feynman propagator and construct 
 Cauchy surfaces covariances of quasifree Hadamard states.
\end{abstract}

\paragraph*{Keywords:} Lorentzian Dirac operator, Fourier integral operators,  pseudodifferential projections, global propagators, Cauchy evolution operator, Feynman propagator, Hadamard states, globally hyperbolic manifolds.
\paragraph*{MSC 2020: } Primary 35L45, 35Q41, 58J40; Secondary 53C50, 58J45,81T05.
\\[0.5mm]

\tableofcontents

\section{Introduction and main results}

Since the appearance of the seminal paper of Duistermaat and H\"ormander \cite{DuHo},  it became increasingly clear that microlocal analysis is an indispensable tool to formulate quantum field theories (QFTs) in a mathematically rigorous fashion. 
On the one hand,  it supplies powerful techniques for solving the partial differential equations that rule the dynamics of quantum fields; on the other hand,  it provides workable criteria for the existence of product of quantum observables,  viewed as a Hilbert-space-valued distributions, thus paving the way for a rigorous formulation of Wick polynomials.

Radzikowski's reformulation \cite{Radzikowski-96} of the Hadamard condition --- which essentially selects as physical states in curved spacetime those states whose ultraviolet behaviour mimics the short-scale divergence of the Poincar\'e vacuum --- in terms of the wave-front set brought microlocal analysis into a centre stage position in the algebraic formulation of quantum field theory in curved spacetime (AQFT).
Since then, microlocal analysis has been a key ingredient in numerous advancements in the field.  Remarkably,  G\'erard and Wrochna \cite{gerard0} (see also the subsequence papers~\cite{gerardHaw,gerardUnruh,gerard1,
gerardDirac,gerardYM,gerard2,
gerard3,gerard4,gerard5,wrochna0,wrochna1,wrochna2}) showed that Hadamard states can be constructed via pseudodifferential techniques on generic curved spacetimes for scalar, Dirac and even (linear) Yang-Mills fields. 

For a detailed self-contained overview of the interplay between microlocal analysis and  quantum field theory we refer the reader to the monograph~\cite{GerardBook},  which is also an excellent introduction to linear scalar QFT in curved spacetime.

The goal of this paper is to construct Cauchy evolution operators,  Feynman propagator and Hadamard states for the Dirac equation on Lorentzian manifolds using Fourier integral --- as opposed to pseudodifferential --- operator techniques. 
 This will be done in an explicit invariant fashion,  in terms of oscillatory integrals with distinguished geometric complex-valued phase functions,  global in space and time. Our approach is ``direct'', in that it does not require one to manipulate resolvents (and, hence,  parameter-dependent pseudodifferential calculi) or resort to complex analysis.

Lorentzian propagators,  on top of being of interest in their own right,  can also be viewed as an important step towards developing interacting quantum field theories in curved spacetime.  Indeed, they play a crucial role in the construction of locally covariant
Wick powers~\cite{HW} as well as in the definition of star products in formal deformation quantisation of classical field theory.

\subsection{Cauchy evolution operator}

Numerous techniques are available in the literature to construct Cauchy evolution operators for hyperbolic problems, \ie operators mapping initial data to propagating solutions.

In the relatively simple case of a scalar problem of the form
\[
(-i\partial_t+A)f=0,  \qquad f|_{t=0}=0,
\]
where $A$ is a self-adjoint elliptic pseudodifferential operator acting on a closed Riemannian manifold $M$ and time $t$ is an external parameter, the \emph{propagator}
\begin{equation}
\label{propagator riemannian}
U(t):=e^{-itA}
\end{equation}
can be written down \emph{exactly} in terms of the eigenvalues and the eigenfunctions of $A$.  Microlocal analysis offers a way of writing down $U(t)$ \emph{approximately}, modulo an integral operator with infinitely smooth kernel,  in the form of a Fourier Integral operator. 
 The classical parametrix construction,  originally proposed by Lax~\cite{lax} and Ludwig~\cite{lud},  and which can be found for example in H\"ormander's four-volume monograph \cite{hormander},  has a number of issues: (i) it is not invariant under changes of local coordinates, (ii) it is local in space and (iii) it is local in time.  The latter issue, locality in time, is especially serious, in that it is related with obstructions due to caustics. 

Laptev, Safarov and Vassiliev \cite{LSV} showed that one can achieve globality in time by writing the evolution operator as an oscillatory integral with \emph{complex-valued} phase function.  For a special class of operators,  which include the wave and Dirac equations,  one can construct the evolution operator in an explicit,  global (in space and time) and invariant fashion by identifying \emph{distinguished} complex-valued phase functions,  dictated by the geometry of the underlying space \cite{wave, dirac, part2}.

Extending Riemannian results to the Lorentzian setting, when such results continue to be true upon appropriate adaptations,  is often a non-trivial enterprise, see \eg~ \cite{strohmaier_zelditch}. This applies to evolution operators as well: one does no longer have propagators of the form \eqref{propagator riemannian},  because time $t$ is not an external parameter and $A$ will in general be time-dependent.
In~ \cite{lorentzian} the authors constructed global parametrices for the scalar wave equation on globally hyperbolic spacetimes.  However,  it is well known that switching from scalar equations to systems entails, from a spectral-theoretic point of view,  a step change both in conceptual and technical difficulty. Systems exhibit properties that are totally different from those of scalar equations, and this is reflected in highly nontrivial challenges in the matters of propagator constructions and spectral asymptotics, see, \eg\cite[Section~11]{CDV}. In this paper we address some of these challenges for Lorentzian Dirac operators, through the prism of their Riemannian counterparts.  Our first main result is the construction of evolution operators for the massless Dirac equation --- a \emph{system} of PDEs --- on globally hyperbolic Lorentzian manifolds with compact Cauchy surface (precise definitions will be given in subsequent sections).  

\begin{theorem}\label{thm:Cauchy}
The Cauchy evolution operator for the reduced Dirac equation (see Subsection~\ref{sec:ident}) on a globally hyperbolic Lorentzian manifold $\M$ of dimension 4 and  with compact Cauchy hypersurface can be written as
\begin{equation*}
\label{main theorem 1 equation 1}
U(t;s)= U^{(+)}(t;s)+U^{(-)}(t;s)
\end{equation*}
modulo an integral operator with infinitely smooth kernel,
where
\begin{multline}
\label{main theorem 1 equation 2}
U^{(\pm)}(t;s)= \frac{1}{(2\pi)^{3}}\int_{\T^*\M} \hspace{-3mm} e^{i\varphi^\pm(t,x;s,y,\eta)}\mathfrak{a}^\pm(t;s,y,\eta)
\\
\times
\chi^\pm(t,x;s,y,\eta)\,w^\pm(t,x;s,y,\eta)\,(\cdot)\,\rho(y)dy\,d\eta.
\end{multline}
Here $\varphi^\pm$ are the positive and negative Levi-Civita phase functions given by Definition~\ref{definition levi civita phase functions}, $\chi^\pm$ are cut-off functions,  $w^\pm$ are related to $\varphi^\pm$ via \eqref{definition weight w pm},  and $\mathfrak{a}^\pm$ are invariantly defined matrix-functions determined via an explicit algorithm.
\end{theorem}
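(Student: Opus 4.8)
The plan is to transplant to the Lorentzian Dirac system the global Fourier integral operator construction developed for Riemannian wave-type operators in \cite{LSV,wave,dirac,part2} and for the scalar Lorentzian wave equation in \cite{lorentzian}. First I would invoke the reduction of Subsection~\ref{sec:ident}: fixing a temporal function and the attendant foliation of $\M$ by Cauchy hypersurfaces, each identified with a fixed compact manifold $\Sigma$, the massless Dirac equation is rewritten as a first-order symmetric hyperbolic system $(-i\partial_t + A(t))\psi = 0$ on $\Sigma$, where $A(t)$ is, for every $t$, an elliptic self-adjoint first-order differential operator depending smoothly on $t$, and $U(t;s)$ is its Cauchy evolution operator. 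The principal symbol $A_1(t,x,\eta)$ is Hermitian with two eigenvalue branches $\pm h(t,x,\eta)$, with $h(t,x,\eta)=|\eta|_{g_t}$ and $g_t$ the Riemannian metric induced on the Cauchy slice.

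\textbf{Decoupling the two branches.} I would introduce the zeroth-order pseudodifferential projections $P^\pm(t)$ onto the eigenspaces of $A_1(t)$ associated with $\pm h$ --- the pseudodifferential projections of the title --- and seek $U^{(\pm)}(t;s)$ as the operator propagating $P^\pm$-filtered initial data along the Hamiltonian flow generated by $\pm h$. Since the two characteristic manifolds $\{\tau = h\}$ and $\{\tau = -h\}$ are disjoint in $\T^*\M$ away from the zero section, the positive and negative constituents do not interact microlocally, so it is enough to build each $U^{(\pm)}$ as an FIO modulo a smoothing remainder; their sum will then coincide with $U(t;s)$ up to smoothing by uniqueness for the Cauchy problem.

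\textbf{Phase, amplitude and transport.} For each sign I would take $\varphi^\pm$ to be the positive/negative Levi-Civita phase function of Definition~\ref{definition levi civita phase functions}; by construction it solves the eikonal equation for $\pm h$ along the corresponding bicharacteristic flow, with $\operatorname{Im}\varphi^\pm\ge 0$ vanishing exactly on the flowout of the diagonal --- the feature responsible for globality in time in the presence of caustics. Substituting the ansatz \eqref{main theorem 1 equation 2} into $-i\partial_t + A(t)$ and grouping terms by degree of homogeneity in $\eta$ produces a hierarchy of transport equations for the positively homogeneous components $\mathfrak{a}^\pm_{-k}$ of the matrix amplitude. Each equation has the schematic shape $\mathcal{L}^\pm\mathfrak{a}^\pm_{-k} = (\text{source built from }\mathfrak{a}^\pm_{-k+1},\dots)$, where $\mathcal{L}^\pm$ is a first-order transport operator along the flow with matrix zeroth-order term $A_1\mp h$; splitting with $P^\pm$ and $\Id - P^\pm$ turns this into an ordinary differential equation along the flow for the $P^\pm$-component (solved with initial value fixed at $t=s$) together with a pointwise-invertible algebraic equation for the complementary component. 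Solving recursively and Borel-summing the tail yields $\mathfrak{a}^\pm$ via the explicit algorithm claimed in the statement; the cut-offs $\chi^\pm$ confine the integral to the region where $\varphi^\pm$ is a genuine (non-degenerate, complex) phase, and the weight $w^\pm$ of \eqref{definition weight w pm} is the density factor attached to $\varphi^\pm$ that makes the oscillatory integral invariantly defined and correctly normalised at $t=s$.

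\textbf{Conclusion and main difficulty.} It then remains to verify (i) that $(-i\partial_t + A(t))U^{(\pm)}(t;s)$ has a smooth Schwartz kernel, which is immediate once every transport equation is satisfied, and (ii) that $U^{(+)}(s;s)+U^{(-)}(s;s)=\Id$ modulo smoothing, which holds because at $t=s$ the Fourier integral operator degenerates to a pseudodifferential operator whose full symbol is $P^++P^-=\Id$, the lower-order contributions being arranged to cancel. I expect the principal obstacle to be the matrix nature of the transport hierarchy: in contrast with the scalar case of \cite{lorentzian}, the source terms mix the $P^\pm$-components through the subprincipal symbol of the Dirac operator and the spin connection, and one must check that the algebraic part of each transport equation is genuinely solvable --- that the resolvent $(A_1\mp h)^{-1}$ restricted to the range of $\Id - P^\pm$ stays smooth and bounded along the Hamiltonian flow --- and that the whole recursion is independent of the auxiliary choices (temporal function, identification of the Cauchy slices, square root defining $w^\pm$). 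A secondary, more geometric point is the global-in-time control of $\operatorname{Im}\varphi^\pm$ and of the bicharacteristic flow over all of $\Sigma\times\Sigma$, which is where the Cauchy-compactness of $\M$ and the specific Lorentzian geometry enter decisively.
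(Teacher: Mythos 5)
Your proposal follows the same overall architecture as the paper's proof: reduce via Subsection~\ref{sec:ident} to a one-parameter family of Riemannian Dirac-type operators on a fixed compact $\Sigma$, take the Levi-Civita phase functions $\varphi^\pm$ as oscillatory phases (complex-valued to beat caustics), reduce the amplitude to kill the $x$-dependence, and solve a hierarchy of matrix transport equations with initial data fixed at $t=s$ via pseudodifferential projections. That part of the strategy is sound and matches Subsections~\ref{sec:Cauchy ev op}--\ref{Construction of the propagators}.

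There is, however, a genuine gap at the one step you treat as routine: the specification of the projections $P_\pm(t)$. You describe them as ``the zeroth-order pseudodifferential projections onto the eigenspaces of $A_1(t)$'' whose ``lower-order contributions are arranged to cancel'' so that $P_++P_-=\Id$. But having the right principal symbol, being idempotent, and summing to $\Id$ modulo $\Psi^{-\infty}$ does \emph{not} single out a projection: there is an infinite-dimensional family of such $P_\pm(t)$ differing at subleading orders, and the claim that $\mathfrak{a}^\pm$ are ``invariantly defined'' would fail. The paper pins the projections down by imposing, in addition to \eqref{pseudodifferential projections equation 1}--\eqref{pseudodifferential projections equation 2}, the evolution (``Heisenberg'') equation $[A_t,P_\pm(t)]=i\,\partial_t P_\pm(t)\bmod\Psi^{-\infty}$ of \eqref{pseudodifferential projections equation 3}; Theorem~\ref{theorem pseudodifferential projections} shows that these three conditions together determine $P_\pm(t)$ uniquely modulo smoothing and \emph{automatically} imply self-adjointness, mutual annihilation \eqref{pseudodifferential projections equation 5} and completeness \eqref{pseudodifferential projections equation 6} --- so the relation $P_++P_-=\Id$ is a theorem, not something to be arranged by hand. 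Moreover, as Remark~\ref{remark spectral projections} stresses, these are \emph{not} the spectral projections $\theta(\pm\overline\Dir_t)$ (which satisfy $[A_t,P_\pm]=0$) except in the ultrastatic case; using the spectral projections would violate the compatibility conditions \eqref{initial condition equation 1}--\eqref{initial condition equation 2} and with them the identity $U_L^{(\pm)}(t;t')U_L^{(\pm)}(t';s)=U_L^{(\pm)}(t;s)\bmod\Psi^{-\infty}$ that the positive/negative propagators must satisfy. Your argument would still produce \emph{some} decomposition $U=U^{(+)}+U^{(-)}$, but neither the invariance of the amplitudes nor the structural properties needed downstream (Feynman propagator, Hadamard states) would follow without identifying and constructing these Lorentzian pseudodifferential projections --- which is precisely the content of Subsection~\ref{sec:Lor pdo proj} and the main novel ingredient you omitted.
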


The adoption of distinguished complex-valued phase functions allows us to define the functions $\mathfrak{a}^\pm$ uniquely in an invariant fashion, and view them as the ``full symbols'' of the Cauchy evolution operator.  Note that, in general, there is no invariant notion of full (or subprincipal) symbol for a Fourier integral operator.

\subsection{Feynman propagator}

The decomposition into `positive' and `negative' propagators achieved in Theorem~\ref{thm:Cauchy},  resulting from the construction of suitably devised one-parameter families of `Lorentzian' pseudodifferential projections,  can be further exploited to construct Feynman parametrices explicitly, in the spirit of the above discussion.

\medskip

The Feynman propagator is a fundamental solution
of the Dirac operator propagating positive energy solutions to the future and negative energy solutions to the past, introduced by Richard Feynman in flat space to describe the physics of
the electron in Quantum Electrodynamics.   From a microlocal perspective, the Feynman propagator can be viewed, modulo smoothing, in terms of the so-called Feynman parametrices,  one type of \emph{distinguished parametrices} defined by Duistermaat and H\"ormander in~\cite[Section~6.6]{DuHo} for general pseudodifferential operators of real principal type.
\begin{definition}\label{def:Feynman para}
Let $\G_F$ be a parametrix for the reduced Dirac operator $\overline{\Dir}_\M$ (see Definition~\ref{def:reduced Dirac}), \textit{i.e.}, a continuous linear map $\G_F:C^\infty_c\big(\RR,\Gamma_c(\S\M|_{\Sigma_0})\big)\to C^\infty\big(\RR,\Gamma_c(\S\M|_{\Sigma_0})\big)$ satisfying
$$\overline{\Dir}_\M \G_F = \mathrm{Id} \mod \Psi^{-\infty}\,.$$
We say that $\G_F$ is a \textit{Feynman parametrix} (or {\em Feynman inverse}) if
\begin{multline*}
\operatorname{WF}(\G_F)
\\
= \Delta^*\cup \{(X,Y,k_X,k_Y)\in \T^*(\M\times\M)\backslash\{0\}|\ (X,k_X) \sim (Y,k_Y) \,,\; X \prec Y \}\,,
\end{multline*}
where: $\Delta^*$ is the diagonal in $\T^*(\M\times\M)\backslash\{0\}$; $(X, k_X ) \sim (Y, k_Y )$ means that $X$ and $Y$ are connected by a lightlike geodesic and $k_Y$ is
the parallel transport of $k_X$ from $X$ to $Y$ along said geodesic; $X\prec Y$ means that $X$ comes strictly before $Y$ with respect to the natural parameterisation of the geodesic.
\end{definition}

Feynman parametrices are important in many respects, within and beyond QFT.  Remarkably, they play a central role in the index theory of Lorentzian Dirac operators on globally hyperbolic manifolds with compact spacelike Cauchy boundary.  Indeed,  the Lorentzian Dirac operator is Fredholm if and only if it admits a Feynman inverse that is Fredholm and their indices are related as ${\rm ind}(\G_F)=-{\rm ind}(\overline{\Dir}_\M)$. Of course,  not any Feynman inverse can be expected to be Fredholm and appropriate boundary conditions have to be imposed, see \eg\cite{BaStr,BaStr2,ShWro}.  B\"ar and Strohmaier have shown \cite{BaStr} that the index of the Lorentzian Dirac operator can be expressed purely in terms of topological quantities associated with the underlying manifold --- very much like in the celebrated Atiyah–Patodi–Singer's theorem~\cite{APS}, its Riemannian counterpart. The resulting Lorentzian index theorem is a powerful analytical tool, which has, for example,  been applied to rigorously describe the gravitational chiral anomaly in QFT~\cite{BaStrCh}.  

\medskip

Our second main result is expressed by the following theorem.
\begin{theorem}\label{thm:Feynman}
The Feynman propagator can be represented, modulo an integral operator with infinitely smooth kernel,  as
\[
\G_F(t,s) = \theta(t-s)U^{(+)}(t;s)-\theta(s-t)U^{(-)}(t;s),
\]
where $U^{(\pm)}$ are defined in accordance with \eqref{main theorem 1 equation 2} and $\theta$ is the Heaviside theta function.
\end{theorem}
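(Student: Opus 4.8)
The plan is to show that the operator appearing on the right-hand side,
\[
\E_F(t,s):=\theta(t-s)\,U^{(+)}(t;s)-\theta(s-t)\,U^{(-)}(t;s),
\]
is a Feynman parametrix for $\overline{\Dir}_\M$ in the sense of Definition~\ref{def:Feynman para}. Since $\overline{\Dir}_\M$ is of real principal type --- its square being, up to lower order terms, the reduced d'Alembertian on spinors, so that its characteristic set is the null cone and the Duistermaat--H\"ormander theory of distinguished parametrices~\cite[Section~6.6]{DuHo} applies --- any parametrix whose wavefront set is contained in $\Delta^{*}\cup C_F$, with $C_F=\{(X,Y,k_X,k_Y):(X,k_X)\sim(Y,k_Y),\,X\prec Y\}$ the Feynman relation of Definition~\ref{def:Feynman para}, must coincide with $\G_F$ modulo an operator with smooth Schwartz kernel (two such parametrices differ by a smooth-source solution operator of $\overline{\Dir}_\M$, which is smooth by propagation of singularities). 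Hence it suffices to establish: (I) $\overline{\Dir}_\M\,\E_F=\mathrm{Id}\bmod\Psi^{-\infty}$; and (II) $\operatorname{WF}'(\E_F)\subseteq\Delta^{*}\cup C_F$.

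For (I), I would apply $\overline{\Dir}_\M$ to $\E_F$ directly. Only the $\partial_t$-part of $\overline{\Dir}_\M$ sees the Heaviside factors, and differentiating them produces a Dirac delta along $\{t=s\}$; collecting the two contributions gives $\delta(t-s)\bigl(U^{(+)}(s;s)+U^{(-)}(s;s)\bigr)$, which by the coincidence identity $U^{(+)}(s;s)+U^{(-)}(s;s)=U(s;s)=\mathrm{Id}$ modulo smoothing (Theorem~\ref{thm:Cauchy}, together with the normalisations of Subsection~\ref{sec:ident}) equals $\mathrm{Id}$ as an operator. The remaining term is $\theta(t-s)\,\overline{\Dir}_\M U^{(+)}-\theta(s-t)\,\overline{\Dir}_\M U^{(-)}$, which is smoothing because the ``Lorentzian'' pseudodifferential projections have been built so that $\overline{\Dir}_\M U^{(\pm)}=0$ modulo smoothing; here one uses that this remainder vanishes to infinite order at $t=s$ --- guaranteed by the explicit algorithm defining $\mathfrak{a}^{\pm}$ --- so that multiplication by a Heaviside preserves smoothness. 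This yields (I). For later use I also record the two elementary identities, valid modulo $\Psi^{-\infty}$,
\[
\E_F\;\equiv\;\G_{\mathrm{ret}}-U^{(-)}\;\equiv\;\G_{\mathrm{adv}}+U^{(+)},\qquad \G_{\mathrm{ret}}(t,s):=\theta(t-s)U(t;s),\quad \G_{\mathrm{adv}}(t,s):=-\theta(s-t)U(t;s),
\]
which follow from $U=U^{(+)}+U^{(-)}$ (Theorem~\ref{thm:Cauchy}) and $\theta(t-s)+\theta(s-t)=1$, and in which $\G_{\mathrm{ret}},\G_{\mathrm{adv}}$ are the retarded and advanced Green's operators of $\overline{\Dir}_\M$ modulo smoothing (by the computation just performed).

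For (II) I would exploit these two identities. One has $\operatorname{WF}'(\G_{\mathrm{ret}})=\Delta^{*}\cup C_{\mathrm{ret}}$ and $\operatorname{WF}'(\G_{\mathrm{adv}})=\Delta^{*}\cup C_{\mathrm{adv}}$, with $C_{\mathrm{ret}}$ (resp.\ $C_{\mathrm{adv}}$) the full null-geodesic relation restricted to pairs with $X$ in the causal future (resp.\ past) of $Y$; and, by the construction of $U^{(\pm)}$ --- specifically by the properties of the positive and negative Levi--Civita phase functions $\varphi^{\pm}$ of Definition~\ref{definition levi civita phase functions}, whose stationary sets cut out the two covector branches of the null bicharacteristic relation --- one has that $\operatorname{WF}'(U^{(\pm)})$ is that relation restricted to the positive, respectively negative, branch (with both time-orderings $X\prec Y$, $X\succ Y$ occurring). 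Therefore
\[
\operatorname{WF}'(\E_F)\;\subseteq\;\bigl(\Delta^{*}\cup C_{\mathrm{ret}}\cup\operatorname{WF}'(U^{(-)})\bigr)\cap\bigl(\Delta^{*}\cup C_{\mathrm{adv}}\cup\operatorname{WF}'(U^{(+)})\bigr),
\]
and a short computation with these four relations --- keeping track of the fact that, on a null geodesic, the covector branch determines the direction of the natural parameterisation relative to the time-orientation --- shows this intersection to be exactly $\Delta^{*}\cup C_F$. This gives (II), and with (I) identifies $\E_F$ with $\G_F$ modulo $\Psi^{-\infty}$, which is the assertion.

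The main obstacle is the last computation in (II): one must match the time-orientation imposed by the Heaviside factors $\theta(\pm(t-s))$ against the covector branch carried by the phases $\varphi^{\pm}$, in such a way that the intersection of the two wavefront bounds reproduces the Feynman prescription $X\prec Y$ rather than the retarded, advanced or anti-Feynman one; one should also check that no shrinkage occurs, i.e.\ that on the Feynman relation the leading singularities of $\G_{\mathrm{ret}/\mathrm{adv}}$ and $U^{(\mp)}$ do not cancel (which follows from the ellipticity of $\mathfrak{a}^{\pm}$ and from the full wavefront set of the retarded/advanced parametrices), although this last point is strictly speaking not needed to conclude, given the uniqueness argument of the first paragraph. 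A secondary, more technical issue is the one flagged in (I), namely the control of the parametrix remainder at coincidence $t=s$.
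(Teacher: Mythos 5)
Your argument is correct and follows essentially the same route as the paper's proof, which observes that $\G_F = U^{(+)} + \G_{\mathrm{adv}}$ (exactly your second identity), uses the Hadamard-form wavefront set of $U^{(\pm)}$ via Proposition~\ref{proposition stationary points of the Levi-Civita phase functions} and~\cite[Proposition 3.8]{gerardDirac}, and then declares the Feynman property ``not hard to check.'' Your write-up merely supplies the details behind that ``not hard to check'' --- the verification that $\overline{\Dir}_\M\E_F=\mathrm{Id}\bmod\Psi^{-\infty}$ via the delta at coincidence and the identity $U^{(+)}(s;s)+U^{(-)}(s;s)=\mathrm{Id}$, plus the wavefront intersection computed from the two symmetric decompositions $\E_F=\G_{\mathrm{ret}}-U^{(-)}=\G_{\mathrm{adv}}+U^{(+)}$ --- and invokes Duistermaat--H\"ormander uniqueness to finish, which is an allowable but not strictly necessary extra step.
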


Theorem~\ref{thm:Feynman} takes earlier constructions of Feynman propagators on globally hyperbolic spacetimes via microlocal techniques --- see,  in particular, \cite{gerardDirac} and \cite{FeyStro}  --- further, by making them, in a sense, more explicit.  The adoption of distinguished geometric phase functions is particularly convenient in carrying out calculations, which result, in turn, in geometrically meaningful invariant quantities.

\subsection{Hadamard states}

As an application, we discuss the relation between our results and the construction of \emph{Hadamard states}.  

Within the quantisation scheme of algebraic quantum field theory in curved spacetimes, a key role is played by \emph{quantum states}, positive normalised linear functionals on the algebra of observables.  The issue one encounters is that the singular structure of an arbitrary state, albeit partially constrained by the equations ruling the dynamics of the quantum field, can in general be quite wild, and this quickly becomes a serious technical limitation in the manipulation of states for the purpose of describing the underlying physics.  For this reason, it was suggested to identify a physically meaningful subset of states --- the \emph{Hadamard states} --- by prescribing that their singularity structure mimic the ultraviolet behaviour of the Poincar\'e vacuum. This translates into a condition on the wavefront set of the 2-point distribution of the quantum state.  

We state here the Hadamard condition for the Dirac field, postponing until Section~\ref{sec:Hadam} precise definitions and a more complete discussion.  We refer the reader to~\cite{GerardBook, aqft2} for textbooks on the subject, to~\cite{CQF1,review} for recent reviews and~\cite{cap2,simogravity,DHP,DMP3, CSth,Junker:2001gx} for some applications. 
\begin{definition}\label{def: Hadamard state orig}
A quasifree state  $\omega$ on the algebra of Dirac field $\mathcal A$ is \textit{Hadamard} if its 2-point distribution $\omega_2$ defined in accordance with \eqref{n point function} satisfies
\begin{equation}
\label{hadamard condition omega 2}
\operatorname{WF}(\omega_2)=\{(X,Y,k_X,-k_Y)\in \T^*(\M\times\M)\backslash\{0\}|\ (X,k_X)\sim(Y,k_Y),\  k_X\rhd 0\},
\end{equation}
where $(X,k_X)\sim(Y,k_Y)$ means that $X$ and $Y$ are connected by a lightlike geodesic and $k_Y$ is the parallel transport of $k_X$ from $X$ to $Y$ along said geodesic, whereas $k_X\rhd 0$ means that the covector $ k_X$ is future pointing.
\end{definition}

Condition \eqref{hadamard condition omega 2} allows for the 
construction of Wick polynomials via a local and covariant scheme 
\cite{HW,IgorValter},  ensuring,  among other things,  the finiteness of the quantum fluctuations \cite{FV}.

Despite being structurally so important, Hadamard states are known to be rather elusive
to pin down,  especially when the spacetime is not static. 
Although their existence is guaranteed quite generally by \emph{non-constructive} deformation arguments~\cite{defarg,defargNorm,defargProc}, for practical purposes a constructive scheme is often needed.

In the last decade G\'erard and Wrochna exploited the pseudodifferential calculus to construct Hadamard states in many different contexts, see \eg\cite{gerard0,gerard1,gerard2,gerard4,gerard5} for the scalar wave equation, \cite{gerard3,wrochna1} for analytic Hadamard states, \cite{gerardHaw,gerardUnruh} for the Hawking and Unruh radiation, \cite{gerardDirac,gerardUnruh} for Dirac fields,  and \cite{gerardYM} for linearised Yang-Mills fields.  Their approach, which is essentially pseudodifferential in nature,  consists in constructing {\em Cauchy surface covariances} associated with a (quasifree) state. 

As a by-product,  our propagator construction provides us with a global, invariant representation of pseudodifferential operators in terms of oscillatory integrals with geometric phase functions, which we can use to explicitly construct Cauchy surface covariances of Hadamard states globally and invariantly.
Our result is summarised by the following theorem.  Once again, we refer to future sections for detailed definitions (see, in particular, Section~\ref{sec:appl}).

\begin{theorem}\label{thm:Hadamard}
Let $\tilde \lambda^\pm\in\Gamma'_c\big(\S\M|_\Sigma\otimes \S\M|_\Sigma \big)$ be defined as
\begin{align*}
\tilde\lambda^\pm(\Phi_1, \Phi_2)&:=\int_{\Sigma}
\fiber{{\rm tr}_{\Sigma_t}\Phi_1}{\gamma_\M(\partial_t)\Pi_\pm(t){\rm tr}_{\Sigma_t}\Phi_1} d\vol_{\Sigma_t}
\end{align*}
where $\Pi_\pm(t)$ are defined in accordance with Definition~\ref{definition left and right projections},  ${\rm tr}_{\Sigma_t}$ denotes the restriction to $\Sigma_t$, and $\gamma_M$ is the Clifford multiplication.
Then, 
\begin{equation}\label{eq:Cauchy surf cov}
\lambda^{\pm}(\Phi_1\oplus \Phi_1',\Phi_2\oplus \Phi_2'):=\tilde\lambda^\pm(\Phi_1, \Phi_2)
+\scalar{\Phi_1'}{\Phi_2'}
-\tilde\lambda^\pm(\Upsilon^{-1}\Phi_1', \Upsilon^{-1}\Phi_2')
\end{equation}
 define Cauchy surface covariances of quasifree Hadamard states for the reduced Dirac operator $\overline \Dir_\M$.  Here $\Upsilon$ is the adjunction map defined in Section~\ref{sec:Dirac} and $\scalar{\cdot}{\cdot}$ is the scalar product~\eqref{eq:Herm prod}.
\end{theorem}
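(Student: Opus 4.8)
The plan is to reduce the statement to the reduced Dirac picture of Subsection~\ref{sec:ident}, in which the dynamics is governed by a first-order-in-time system of the form $(-i\partial_t+A(t))$ acting on sections of $\S\M|_{\Sigma_0}$, with Cauchy evolution operator $U(t;s)$ and one-parameter families of pseudodifferential projections $\Pi_\pm(t)$ as in Definition~\ref{definition left and right projections}. In this language a pair $\lambda^\pm$ of sesquilinear forms on the Cauchy data space is a pair of Cauchy surface covariances of a quasifree state precisely when (i) $\lambda^\pm\ge 0$, (ii) $\lambda^++\lambda^-$ equals the canonical Hermitian form on Cauchy data induced by $\overline{\Dir}_\M$, and the state obtained by evolving $\lambda^\pm$ off $\Sigma$ is Hadamard, \ie (iii) its two-point distribution has the wavefront set prescribed in \eqref{hadamard condition omega 2}. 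Properties (i) and (ii) are essentially algebraic; property (iii) is where Theorems~\ref{thm:Cauchy} and~\ref{thm:Feynman} enter.

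For (i) I would first note that positivity of $\tilde\lambda^\pm$ reduces to the pointwise inequality $\fiber{\Psi}{\gamma_\M(\partial_t)\Pi_\pm(t)\Psi}\ge 0$ for every spinor field $\Psi$ on $\Sigma_t$. This holds because $\fiber{\,\cdot\,}{\gamma_\M(\partial_t)\,\cdot\,}$ is a positive-definite Hermitian form on $\S\M|_{\Sigma_t}$ --- the Dirac inner product associated with the future timelike vector field $\partial_t$ --- and, with respect to this form, $\Pi_\pm(t)$ are, modulo smoothing, the non-negative and negative spectral projections of the formally self-adjoint operator $A(t)$, by construction of the Lorentzian pseudodifferential projections. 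For the full $\lambda^\pm$ one uses the block form of \eqref{eq:Cauchy surf cov}: since $\Pi_++\Pi_-=\Id$ one has $\tilde\lambda^++\tilde\lambda^-=\scalar{\cdot}{\cdot}$ after the identification via $\Upsilon$, so that $\scalar{\Phi_1'}{\Phi_2'}-\tilde\lambda^\pm(\Upsilon^{-1}\Phi_1',\Upsilon^{-1}\Phi_2')=\tilde\lambda^\mp(\Upsilon^{-1}\Phi_1',\Upsilon^{-1}\Phi_2')\ge 0$; hence $\lambda^\pm\ge 0$, which is (i), and the same identity gives $\lambda^++\lambda^-$ equal to the canonical charge form on the doubled Cauchy data space, which is (ii). This step is pure bookkeeping with the adjunction $\Upsilon$ of Section~\ref{sec:Dirac}.

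The heart of the argument is (iii). I would write the two-point distribution of the candidate state as a composition of the restriction maps ${\rm tr}_{\Sigma_t}$, the fibrewise pairing $\fiber{\cdot}{\gamma_\M(\partial_t)\Pi_\pm(t)\cdot}$ and the Cauchy evolution operator $U(t;s)$; equivalently, by Theorem~\ref{thm:Feynman}, one identifies it modulo smoothing with a combination of the Feynman parametrix $\G_F=\theta(t-s)U^{(+)}(t;s)-\theta(s-t)U^{(-)}(t;s)$ and the advanced fundamental solution of $\overline{\Dir}_\M$, so that the relevant differences are Fourier integral operators concentrated over a single half of the characteristic cone. By Theorem~\ref{thm:Cauchy} the propagators $U^{(\pm)}$ are Fourier integral operators whose canonical relations are the bicharacteristic flows over the positive and negative halves of that cone, while the principal symbols of $\Pi_\pm(t)$ project onto the corresponding eigenbundles. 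Feeding this into H\"ormander's wavefront-set calculus for compositions of Fourier integral operators with the restriction operators then yields that the wavefront set of the two-point distribution consists exactly of the covectors $(X,Y,k_X,-k_Y)$ with $X$ and $Y$ on a common lightlike geodesic, $k_Y$ the parallel transport of $k_X$ along it, and $k_X$ future pointing --- that is, precisely \eqref{hadamard condition omega 2} --- for the $\lambda^+$ state directly and for the $\lambda^-$ state after the charge-conjugation encoded in \eqref{eq:Cauchy surf cov}. The ``wrong'' half of the cone is annihilated modulo smoothing by $\Pi_\pm$, and the non-vanishing of the amplitude $\mathfrak{a}^\pm$ together with the fibrewise positivity shows that the wavefront set is no smaller.

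The main obstacle is this last wavefront-set computation. One must compose the Fourier integral operators $U^{(\pm)}$ --- which carry \emph{complex-valued} geometric phase functions, so that the argument has to pass through the calculus of Fourier integral operators with complex phase --- with the pseudodifferential projections $\Pi_\pm(t)$ and the restriction maps, keeping careful track of the parallel-transport and future-pointing conditions, and in particular checking that no spurious contribution from the wrong half of the characteristic cone survives and that the leading amplitude does not vanish along the bicharacteristics. One also has to get the sign bookkeeping on the doubled bundle through $\Upsilon$ right, so that the covector $-k_Y$ (rather than $k_Y$, as in the Feynman case of Theorem~\ref{thm:Feynman}) and the future-pointing condition on $k_X$ come out correctly. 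By comparison, the algebraic properties (i) and (ii) are routine once the structural properties of the Lorentzian pseudodifferential projections are in hand.
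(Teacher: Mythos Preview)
Your overall strategy matches the paper's: verify the algebraic CAR conditions (positivity and the sum rule) and then check the Hadamard wavefront-set condition. However, there is one genuine error and one notable difference in route.

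The error is in your positivity argument for (i). You claim that $\Pi_\pm(t)$ are, modulo smoothing, the non-negative and negative spectral projections of $A(t)$. This is false in general: Remark~\ref{remark spectral projections} states explicitly that the Lorentzian pseudodifferential projections of Theorem~\ref{theorem pseudodifferential projections} do \emph{not} coincide with $\theta(\pm\overline{\Dir}_t)$, not even modulo $\Psi^{-\infty}$, because they satisfy the dynamical commutation relation \eqref{pseudodifferential projections equation 3} rather than \eqref{remark spectral projections equation 1}. The correct reason positivity holds is the modification carried out just before Definition~\ref{definition left and right projections}: there the projections are corrected by smoothing operators (via Riesz projections of the auxiliary operator $\mathcal{P}(t)=P_+(t)+2P_-(t)$) so that \eqref{pseudodifferential projections equation 2}, \eqref{pseudodifferential projections equation 4}--\eqref{pseudodifferential projections equation 6} hold \emph{exactly}. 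Once $\Pi_\pm(t)$ is an honest self-adjoint idempotent with respect to the positive-definite form $\fiber{\cdot}{\gamma_\M(\partial_t)\cdot}$, one has $\tilde\lambda^\pm(\Phi,\Phi)=\scalar{\Pi_\pm(t)\Phi}{\Pi_\pm(t)\Phi}\ge 0$ directly, with no appeal to spectral projections. The paper packages this, together with your step (ii), as an application of Araki's Theorem~\ref{Araki's Theorem} and Corollary~\ref{Araki's corollary}.

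For (iii) you propose to compute the wavefront set directly by composing the FIOs $U^{(\pm)}$ (with complex phase) with the restriction maps and $\Pi_\pm$, invoking Theorems~\ref{thm:Cauchy} and~\ref{thm:Feynman}. That is a legitimate route, but the paper takes a shorter one: it observes that the principal symbols of $\Pi_\pm(t)$ are the eigenprojections $P^{(\pm)}$ onto the positive/negative parts of the characteristic cone (this is \eqref{pseudodifferential projections equation 1}), notes that Proposition~\ref{proposition stationary points of the Levi-Civita phase functions} identifies the critical sets of the phase functions with the bicharacteristic flow, and then invokes \cite[Proposition~3.8]{gerardDirac}, which says precisely that Cauchy surface covariances built from pseudodifferential projections with this principal-symbol property yield Hadamard states. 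Your direct computation would ultimately reprove that external result in this setting; it is not wrong, just longer, and the delicate points you flag (complex-phase calculus, absence of contributions from the wrong half of the cone) are exactly what that proposition absorbs.
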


\subsection{Structure of the paper}
The paper is structured as follows.  

Section~\ref{sec:preliminaries} contains a brief summary of the relevant notions of Lorentzian spin geometry needed throughout the paper. 

Section~\ref{sec:Geom constr prop} is the core of the paper.  In Subsection~\ref{sec:ident} we reduce the Cauchy problem for the Dirac operator to the Cauchy problem for a one-parameter family of Riemannian Dirac-type operator. In Subsection~\ref{sec:Cauchy ev op} we introduce the notion of positive/negative Cauchy evolution operators and in Subsection~\ref{sec:Lor pdo proj} we discuss the existence of appropriate `Lorentzian' (time-dependent) pseudodifferential projections which will implement the initial conditions of our evolution operators construction. Finally in Subsection~\ref{Construction of the propagators} we provide an algorithm to construct the integral kernel of the positive/negative Cauchy evolution operators. 

We conclude our paper with  Section~\ref{sec:appl}, devoted to the applications in quantum field theory on curved spacetimes. After recalling the notions of CAR algebra of Dirac solutions in Subsection~\ref{sec:Dir alg sol} and the basic properties of Hadamard states in Subsection~\ref{sec:Hadam},  Subsections~\ref{sec:proof1} and~\ref{sec:proof2} discuss the explicit invariant construction of Cauchy surface covariances and Feynman propagators for Dirac fields. 

\subsection*{General notation and conventions}
\begin{itemize}
\item[-] $(\M,g=-\beta^2dt^2+h_t)$ denotes a Cauchy compact, globally hyperbolic manifold of dimension $4$.
\item[-] $t:\M\to\RR$ denotes a  Cauchy temporal function (\cf Definition~\ref{def:Cauchy temporal}) and we denote Cauchy hypersurfaces by $\Sigma_s:=t^{-1}(s)$, for any $s\in\RR$. 
\item[-] Given a Cauchy temporal function $t$, we write $X=(t,x)$ for any $X\in \M$.
\item[-] The set $J^+(p)$ (\textit{resp.} $J^-(p)$) denotes the causal future (\textit{resp}. past) of $p\in \M$.

\item[-] $\T\Sigma$ is the tangent bundle of $\Sigma$ and we adopt the notation $\T'\Sigma:=\T\Sigma\setminus\{0\}$.
\item[-] Given a vector bundle $\E$ over $\M$, we denote
by $\Gamma(\E)$, $\Gamma_c(\E)$ and  $\Gamma_{sc}(\E)$ the linear spaces of smooth, \resp compactly supported, \resp spacially compact smooth 
sections of $\E$. 
\item[-] $\S\M$ denotes the spinor bundle over $(\M,g)$ (see Definition~\ref{def:spinor bundle}), $\gamma_\M$ is the Clifford multiplication,  $\Dir_\M : \Gamma(\S\M)\to\Gamma(\S\M)$ denotes the Lorentzian Dirac operator (see Definition~\ref{def:Dir}) and $\Dir_t$ the Riemannian Dirac operator on the spinor bundle $\S\Sigma_t$ over $(\Sigma,h_t)$.
\item[-] $\sharp:\Gamma(\T^*\M)\to\Gamma(\T\M)$ and its inverse  $\flat:\Gamma(\T\M)\to\Gamma(\T^*\M)$ denote the standard (fiberwise) musical isomorphisms associated with a given metric $g$ on $\M$.
\item[-]
We denote by $\Psi^k(\Sigma;\mathbb{C}^2)$ the space of polyhomogeneous pseudodifferential operators of order $k$ acting on 2-columns of complex-valued scalar functions on $\Sigma$.   Given $A\in\Psi^k(\Sigma;\mathbb{C}^2)$  we denote its principal symbol as $A_{\rm prin}$. Moreover, by $A=B \mod \Psi^{-\infty}$ we mean that $A-B$ is an integral operator with infinitely smooth kernel in all variables involved.
\item[-] For a vector-valued distribution $u\in\Gamma'_c(\E)$, we adopt the standard convention that the wavefront set $\operatorname{WF}(u)$ is the union of the wavefront sets of its components in an arbitrary but fixed local frame.
\item[-] $\overline\G$ denotes the causal propagator for the reduced Dirac operator $\overline{\Dir}_\M$ (see Definition~\ref{def:reduced Dirac}).
\item[-] We adopt Einstein's summation convention over repeated indices. We will normally denote spacetime tensor indices by roman letters and spatial tensor indices (related to quantities on $\Sigma$) by Greek letters.
\end{itemize} 

\subsection*{Acknowledgments}
We are grateful to Alberto Bonicelli and to Christian Gérard for helpful discussions related to the topic of this paper.

\subsection*{Funding}
MC was partially supported by the Leverhulme Trust Research Project Grant RPG-2019-240 and EPSRC grant EP/X01021X/1: both are gratefully acknowledged.
SM was supported by the DFG research grant MU 4559/1-1 ``Hadamard States in Linearized Quantum Gravity'' and is grateful for the support of the INdAM-GNFM funding ``Feynman propagator for Dirac fields: a microlocal analytic approach'' during the final stages of this project.

\section{Preliminaries in Lorentzian spin geometry}
\label{sec:preliminaries}

The aim of this section is to recall some basic results for Lorentzian Dirac operators on globally hyperbolic manifolds. 
We refer the reader to \cite{BaWave} and~\cite{Spin} for a more detailed introduction to Lorentzian geometry, spin geometry and Dirac operators on pseudo-Riemmanian manifolds.

\subsection{Globally hyperbolic manifolds}\label{Geometric setting}

In the following,  $(\M, g)$ will be a $4$-dimensional Lorentzian manifold of metric signature $(-,+	, +, +)$.  Within this class, we will focus on \emph{globally hyperbolic} Lorentzian manifolds: these are a distinguished class of Lorentzian manifolds on which initial value problems for hyperbolic equations of mathematical physics are well-posed.

\begin{definition}\label{def:GH}
	A \emph{globally hyperbolic manifold} is an oriented,
	time-oriented, Lorentzian smooth manifold $(\M,g)$ such that
	\begin{enumerate}[(i)]
		\item $\M$ is causal, \ie there are no closed causal curves;
		\item for every pair of points $p,q\in \M$,  the set $J^+(p)\cap J^-(q)$ is either compact or empty. 
	\end{enumerate}
\end{definition}

 In his seminal paper~\cite{Geroch},  Geroch established that the global hyperbolicity of a Lorentzian manifold is equivalent to the existence of a Cauchy hypersurface. 
\begin{definition}
A subset $\Sigma\subset\M$ of a spacetime $(\M,g)$ is called {\em Cauchy hypersurface} if it intersects exactly once any inextendible future-directed smooth timelike curve.
\end{definition}
We remind the reader that a smooth curve $\gamma : I \to \M$ with $I \subset \mathbb{R}$  open interval is said to be {\em inextendible}  if there is no  {\em continuous} curve $\gamma': J \to \M$ defined on  an open interval  $J \subset \mathbb{R}$ such that $I \subsetneq J$ and $\gamma'|_I =\gamma$.
\begin{theorem}[\protect{\cite[Theorem 11]{Geroch}}]\label{thm:Geroch} A Lorentzian manifold $(\M,g)$ is globally hyperbolic if and only if it contains a Cauchy hypersurface.
\end{theorem}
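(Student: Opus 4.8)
The plan is to establish the two implications separately, the bulk of the work lying in the forward direction, for which I would reproduce Geroch's measure-theoretic construction of a global time function.

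For the easy direction, suppose $\M$ admits a Cauchy hypersurface $\Sigma$. First observe that $\Sigma$ must be achronal: if two of its points were joined by a future-directed timelike curve, that curve could be extended to an inextendible one meeting $\Sigma$ at least twice, contradicting the defining property. Causality of $\M$ follows similarly, since a closed causal curve can be traversed infinitely often and then deformed slightly into the timelike cone to yield an inextendible timelike curve crossing $\Sigma$ more than once. For compactness of $J^+(p)\cap J^-(q)$ I would invoke the standard fact that the interior of the total domain of dependence $D(S)$ of an achronal edgeless set $S$ is globally hyperbolic in the classical sense (strong causality together with compact causal diamonds); since $\Sigma$ being Cauchy forces $D(\Sigma)=\M$, one gets $\operatorname{int} D(\Sigma)=\M$, hence $\M$ is globally hyperbolic. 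Alternatively, a direct limit-curve argument shows that $J^+(p)\cap J^-(q)$ is closed and sequentially compact, the limiting causal curve being trapped between $\Sigma$ and the endpoints $p$, $q$.

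For the hard direction, assume $(\M,g)$ is causal with all causal diamonds compact. I would first upgrade causality to strong causality --- with the modern definition adopted here this is a lemma, ruling out local almost-closed causal curves and imprisonment via the compactness of causal diamonds. Next, fix a smooth measure $\mu$ on $\M$ of total mass $\mu(\M)=1$ (obtained from any volume form rescaled by a positive smooth density through an exhaustion/partition-of-unity argument) and set
\[
t^-(p):=\mu\big(J^-(p)\big),\qquad t^+(p):=\mu\big(J^+(p)\big).
\]
The properties to establish are: (i) $t^\pm$ are continuous on $\M$; (ii) $t^-$ is strictly increasing and $t^+$ strictly decreasing along every future-directed causal curve, the strictness coming from the non-emptiness of the open chronological futures/pasts $I^\pm(p)$ together with strong causality; and (iii) along any inextendible future-directed causal curve $\gamma$ one has $t^-(\gamma(s))\to 0$ at the past end and $t^+(\gamma(s))\to 0$ at the future end --- here compactness of diamonds is essential, as it prevents the causal past of the entire curve from carrying positive mass. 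Then $\tau:=\log(t^-/t^+)$ is a continuous function, strictly increasing along future-directed causal curves and tending to $\mp\infty$ at the two ends of any inextendible causal curve.

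Finally, for each $c\in\RR$ I would check that $\Sigma_c:=\tau^{-1}(c)$ is a Cauchy hypersurface: an inextendible future-directed smooth timelike curve is in particular a causal curve along which $\tau$ increases continuously and strictly from $-\infty$ to $+\infty$, hence attains the value $c$ exactly once and so meets $\Sigma_c$ exactly once. (That $\Sigma_c$ is moreover a locally Lipschitz topological hypersurface, smoothable to a spacelike one, is the refinement due to Bernal and S\'anchez, and is not needed for the statement as phrased.) The main obstacle is the pair (i)--(iii): continuity of $p\mapsto\mu(J^-(p))$ rests on the upper and lower semicontinuity of the causal past, a consequence of the causal-simplicity-type behaviour forced by global hyperbolicity, while the vanishing of $t^\mp$ at the ends of inextendible causal curves is a genuine and essential use of the compact-diamonds hypothesis --- both properties fail in causal spacetimes that are not globally hyperbolic.
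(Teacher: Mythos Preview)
The paper does not prove this theorem: it is stated as a cited result from \cite{Geroch} and is used as background, with no proof environment following it. Your proposal is essentially Geroch's original measure-theoretic construction (volume-of-past and volume-of-future time functions, then $\tau=\log(t^-/t^+)$), and the outline is correct. One small remark: Geroch works with the chronological sets $I^\pm(p)$ rather than $J^\pm(p)$ when defining $t^\pm$, which makes the semicontinuity arguments for (i) slightly cleaner, since $I^\pm(p)$ are open; with $J^\pm(p)$ you need to invoke closedness of causal futures and pasts, which does hold under global hyperbolicity but is an extra step. Otherwise your sketch matches the classical argument and there is no gap.
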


It turns out that Cauchy hypersurfaces of $(\M,g)$ are codimension-1 topological submani\-folds of $\M$  homeomorphic to each other. As a byproduct of Geroch’s theorem, it follows
that a globally hyperbolic manifold $(\M,g)$ admits a continuous foliation in Cauchy hypersurfaces
$\Sigma$, namely $\M$ is homeomorphic to $\RR\times \Sigma$.
This is established by finding a {\em Cauchy time function}, \ie a countinuous function $t:\M\rightarrow\mathbb{R}$ which is strictly increasing on any future-directed timelike curve and whose level sets $t^{-1}(t_0)$, $t_0\in \mathbb{R}$, are Cauchy hypersurfaces  homeomorphic to $\Sigma$.
 Geroch's splitting is topological in nature,  and the fact that it can be done in a smooth way has been part of the mathematical folklore whilst remaining an open problem for many years. Only recently Bernal and S\'anchez \cite{BeSa} ``smoothened'' the result of Geroch by introducing the notion of a \emph{Cauchy temporal function}.

\begin{definition} \label{def:Cauchy temporal}
Given a connected time-oriented smooth
Lorentzian manifold $(\M,g)$,  we say that a smooth function  $t : \M \to \RR$ is a {\em Cauchy temporal function}  if
the embedded codimension-1 submanifolds given by  its level sets are smooth Cauchy hypersurfaces and $dt^\sharp$ is past-directed and timelike. 
\end{definition} 

\begin{theorem}[\protect{\cite[Theorem 1.1, Theorem 1.2]{BeSa}}, \protect{\cite[Theorem 1.2]{BeSa2}}]\label{thm: Sanchez}
Any globally hyperbolic manifold $(\M,\overline g)$  admits a Cauchy temporal function. In particular, it is isometric to the (smooth) product manifold $\RR \times \Sigma$ with metric 
\begin{equation}  
 \label{GHmetric}
g= - \beta^2 d t^2 + h_t,
\end{equation}
where $t:\RR\times \Sigma \to \RR$ is a Cauchy temporal function acting as a projection onto the first factor,	 $\beta : \RR \times \Sigma \to \RR$ is a positive smooth function called {\em lapse function},  and $h_t$ is a one-parameter family of Riemannian metrics on $\Sigma$.\\
Moreover, if $\Sigma \subset \M$ is a spacelike Cauchy hypersurface, then there exists a Cauchy temporal function $t$ such that $\Sigma$ belongs to the foliation $t^{-1}(\RR)$.
\end{theorem}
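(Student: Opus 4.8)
The plan is to deduce the splitting from Geroch's topological result (Theorem~\ref{thm:Geroch}) combined with a careful smoothing argument. First I would establish the existence of a Cauchy \emph{temporal} function (as opposed to merely a Cauchy \emph{time} function). Starting from Geroch's continuous Cauchy time function, the strategy is to construct a smooth function $t:\M\to\RR$ whose level sets are spacelike smooth Cauchy hypersurfaces and whose gradient $dt^\sharp$ is past-directed timelike. The natural device is to look for a \emph{temporal} function, i.e.\ a smooth function with past-directed timelike gradient, and to show that any such function whose level sets are Cauchy hypersurfaces does the job. The existence of such a function is obtained by a careful partition-of-unity patching of local temporal functions (built from small coordinate charts adapted to the causal structure), together with an averaging/mollification argument that preserves the timelike character of the gradient --- crucially, the set of past-directed timelike covectors at each point is an open convex cone, so convex combinations of past-directed timelike covectors remain past-directed timelike. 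This convexity is what makes the patching work and is the technical heart of the Bernal--S\'anchez construction.

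Second, once a Cauchy temporal function $t$ is in hand, I would use its flow to produce the product structure. Since $dt^\sharp$ is timelike and nowhere zero, one can normalise the associated gradient flow and use it to identify $\M$ diffeomorphically with $\RR\times\Sigma$, where $\Sigma=t^{-1}(0)$ and the $\RR$-factor is the flow parameter of (a rescaling of) $\nabla t$. Under this identification $t$ becomes the projection onto the first factor. The key point is that, because $\nabla t$ is orthogonal to every level set $\Sigma_s=t^{-1}(s)$, the pullback metric contains no cross terms $dt\otimes(\text{spatial})$: writing the flow of $\nabla t/|\nabla t|^2$ one sees that transport along the flow maps level sets to level sets, and orthogonality of $\nabla t$ to $T\Sigma_s$ is exactly the vanishing of the shift vector. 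Hence $g$ takes the form $-\beta^2\,dt^2+h_t$ with $\beta^2=-1/g(\nabla t,\nabla t)>0$ smooth and positive, and $h_t$ the induced Riemannian metric on $\Sigma_t$ (each $\Sigma_t$ is spacelike, so $h_t$ is positive definite, and it depends smoothly on $t$). That each $\Sigma_t$ is a smooth \emph{Cauchy} hypersurface --- not just a smooth spacelike hypersurface --- follows from the definition of a Cauchy temporal function and the fact that these are the level sets in question.

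Third, for the last assertion --- that a \emph{given} spacelike Cauchy hypersurface $\Sigma$ can be made to belong to such a foliation --- I would invoke the refinement of Bernal--S\'anchez (this is precisely \cite{BeSa2}, Theorem~1.2, cited in the statement). The idea is to modify the construction above so that the temporal function is built to vanish exactly on $\Sigma$ and to have the prescribed gradient direction along $\Sigma$: one starts the patching from local temporal functions that already agree with a collar coordinate of $\Sigma$ near $\Sigma$, and then extends globally while keeping the timelike-gradient property via the same convex-cone argument. The compatibility of the local pieces along $\Sigma$ is arranged using a tubular neighbourhood of $\Sigma$ and a cutoff that interpolates between the collar coordinate near $\Sigma$ and the globally constructed temporal function away from $\Sigma$.

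The main obstacle is the first step: upgrading Geroch's merely continuous time function to a smooth temporal function whose gradient is everywhere timelike. The naive approach of directly mollifying a Cauchy time function fails because mollification need not preserve monotonicity along \emph{all} timelike curves, nor the timelike character of the gradient; one genuinely needs the local-to-global construction with the convexity of the cone of past-directed timelike covectors doing the work of keeping convex combinations admissible. Since this is exactly the content of \cite{BeSa,BeSa2}, in the paper proper I would simply cite those results; the proposal above indicates the structure of the argument one would reconstruct if needed.
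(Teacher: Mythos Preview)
The paper does not prove this theorem: it is stated with attribution to Bernal--S\'anchez \cite{BeSa,BeSa2} and used as a black box. There is therefore no proof in the paper to compare your proposal against.

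That said, your sketch is a faithful outline of the Bernal--S\'anchez argument: the convexity of the cone of past-directed timelike covectors is indeed the key device that allows the partition-of-unity patching to preserve the temporal character, and the orthogonal splitting follows from flowing along (a normalisation of) $\nabla t$. If you were asked to supply a proof here, the appropriate response --- matching what the paper does --- is simply to cite \cite{BeSa,BeSa2}; your outline would be suitable only if the goal were an expository reconstruction.
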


The class of globally hyperbolic manifolds is non-empty and contains many important examples of spacetimes relevant to
general relativity and cosmology, \eg
the de Sitter spacetime,  a maximally symmetric solution of Einstein's equations with
positive cosmological constant. 
Of course, given any $n$-dimensional compact manifold $\Sigma$,  an open interval $I\subseteq\mathbb{R}$ and a smooth one-parameter family of Riemannian metrics $\{h_t\}_{t\in I}$, the $(n+1)$-dimensional Lorentzian manifold $(I\times\Sigma, g=-dt^2+h_t)$ is globally hyperbolic.

%

\subsection{The Lorentzian Dirac operator}\label{sec:Dirac}

%
As before, let $(\M,g)$ be a globally hyperbolic manifold of dimension $4$. Since any $3$-manifold is parallelisable,  then $\M$ is parallelisable as well on account Theorem~\ref{thm: Sanchez}. This guarantees the existence of a spin structure on $(\M,g)$, \ie a double covering map from the $\Spin_0(1,3)$-principal bundle $\P_{\rm 
Spin_0}$  to the bundle of positively oriented tangent frames $\P_{\rm SO^+}$ of 
$\M$ such that the following diagram is commutative:
\begin{flalign*}
\xymatrix{
\P_{\Spin_0} \times \Spin_0(1,3) \ar[d]_-{} \ar[rr]^-{} && \P_\Spin \ar[d]_-{} 
\ar[drr]^-{}   \\
\P_{\rm SO^+} \times \textnormal{SO}^+(1,3)   \ar[rr]^-{} &&  \P_{\rm SO^+}  
\ar[rr]^-{} &&  \M \,.
}
\end{flalign*}
By \emph{frame} at a point $p\in M$, we mean a positively oriented and positively time-oriented orthonormal (in the Lorentzian sense) collection vectors $e_j$,  $j=0,\ldots, 3$, in the tangent space $\T_p\M$.  By a \emph{framing} of $\M$ we mean a choice of frame $\{e_j(p)\}_{j=0}^3$ at every point $p\in \M$ depending smoothly on the base point $p$. For future convenience, we define
\begin{align}\label{Eq: Sigma unit normal}
e_0 := \beta^{-1}\partial_t\,
\end{align}
to be the global unit normal to the foliation $\Sigma_t := \{t\}\times \Sigma$.  Unless otherwise stated, in what follows a framing $\{e_j\}_{j=0}^n$ will always be such that the vector field $e_0$ is given by \eqref{Eq: Sigma unit normal}.  Of course,  formulae \eqref{GHmetric} and \eqref{Eq: Sigma unit normal} imply $g(e_0,e_0)=-1$.

\begin{definition}\label{def:spinor bundle}
We call {\em (complex) spinor bundle} $\S\M$ the complex vector bundle
$$\S\M:=\Spin_0(1,3)\times_\zeta \CC^4$$
where $\zeta: \Spin_0(1,3) \to \textnormal{Aut}(\CC^4)$ is the complex 
$\Spin_0(1,3)$ representation.
\end{definition}

Notice that, since $\M$ is parallelisable,  the spinor bundle $\S\M$ is trivial. 
As customary, we equip the spinor bundle with the following structures:
\begin{itemize}
\item[-] a natural $\Spin_0(1, 3)$-invariant indefinite inner product
\begin{equation*}\label{eq: spin prod}
\fiber{\cdot}{\cdot}_p: \S_p\M \times \S_p\M \to \CC;
\end{equation*}
\item[-] a \textit{Clifford multiplication}, \ie  a fibre-preserving map 
$$\gamma_\M\colon \T\M\to \text{End}(\S\M)$$ 
which satisfies, for every $p \in \M$, $u, v \in \T_p\M$ and $\psi,\phi\in \S_p\M$, the identities
$$  \gamma_\M(u)\gamma_\M(v) + \gamma_\M(v)\gamma_\M(u) = -2g(u, v)\Id_{\S_p\M} $$ and $$
\fiber{\gamma_\M(u)\psi}{\phi}_p=\fiber{\psi}{\gamma_\M(u)\phi}_p\,.$$
\end{itemize}
Using the spin product~\eqref{eq: spin prod},  one defines the \textit{adjunction map} to be the complex anti-linear vector bundle isomorphism
\begin{equation*}\label{eq:adj map}
\Upsilon_p:\S_p\M_g\to \S^*_p\M_g  \qquad \psi \mapsto \fiber{\psi}{\cdot}\,,
\end{equation*}
where  $\S^*_p\M_g$ is the so-called \textit{cospinor bundle}, \ie  the dual bundle to $\S_p\M_g$.

\begin{definition} \label{def:Dir}
The \textit{(Lorentzian) Dirac operator} $\Dir_\M$ is defined as the 
composition of the metric connection $\nabla^{\S\M}$ on $\S\M$, obtained as lift 
of the Levi-Civita connection on $\T\M$, and the Clifford multiplication:
$$\Dir_\M:=\gamma_\M\circ\nabla^{\S\M} \colon \Gamma(\S\M) \to \Gamma(\S\M)\,.$$
\end{definition}

 Given a framing $\{e_j\}_{j=0}^3$ of $\M$,  the 
Dirac operator reads
\begin{align*}
\Dir_\M  = \sum_{j=0}^{3}  c_j\, \gamma_\M(e_j) \nabla^{\S\M}_{e_j}\, 
\end{align*}
where
$c_j:=g(e_j,e_j)=1-2\delta_{0j}$.


\begin{theorem}[\protect{\cite[Theorem 5.6 and Proposition 5.7]{BaGreen}}]
The Cauchy problem for the Dirac operator $\Dir_\M$ is well-posed, \ie for every $s\in\RR$ and every pair $(f,h)\in\Gamma_c(\S\M)\times\Gamma_{c}(\S\M_{|_{\Sigma_s}})$  there exists a unique $\psi\in\Gamma_{sc}(\S\M)$ satisfying the Cauchy problem
\begin{equation}\label{Cauchysmooth}
\begin{cases}{}
{\Dir_\M}\psi=f  \\
\psi_{|_{\Sigma_s}} = h
\end{cases} 
\end{equation}
and  the solution map $(f,h)\mapsto\psi$ is continuous in the standard topology of smooth sections. 
 \end{theorem}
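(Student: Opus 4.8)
The plan is to convert the first-order equation $\Dir_\M\psi=f$ into a linear \emph{symmetric hyperbolic} evolution equation along the Cauchy foliation $\{\Sigma_t\}$ provided by Theorem~\ref{thm: Sanchez}, and then run the classical energy method: energy identities give uniqueness and a domain-of-dependence estimate, and the latter, combined with global hyperbolicity, promotes local solvability to a global solution with the required support and continuity properties. Concretely, fix a framing $\{e_j\}_{j=0}^3$ with $e_0=\beta^{-1}\partial_t$. Since $c_0=g(e_0,e_0)=-1$ one has $\gamma_\M(e_0)^2=\Id$, so $\gamma_\M(e_0)$ is invertible, and by Definition~\ref{def:Dir}
\[
\Dir_\M=-\gamma_\M(e_0)\nabla^{\S\M}_{e_0}+\sum_{j=1}^3\gamma_\M(e_j)\nabla^{\S\M}_{e_j}.
\]
Applying $-\gamma_\M(e_0)$ to both sides of $\Dir_\M\psi=f$ and using $\nabla^{\S\M}_{e_0}=\beta^{-1}\nabla^{\S\M}_{\partial_t}$ rewrites the equation as $\nabla^{\S\M}_{\partial_t}\psi=A(t)\psi+F$, where $F:=-\beta\,\gamma_\M(e_0)f$ and, for each $t$, $A(t)$ is a first-order differential operator on sections of $\S\M|_{\Sigma_t}$ with principal part $\beta\,\gamma_\M(e_0)\sum_{j\ge1}\gamma_\M(e_j)\nabla^{\S\M}_{e_j}$. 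Endow the fibres of $\S\M|_{\Sigma_t}$ with the Hermitian product $\scalar{\cdot}{\cdot}:=\fiber{\gamma_\M(e_0)\cdot}{\cdot}$, which is \emph{positive definite} because $e_0$ is future-directed timelike; the compatibility of $\gamma_\M$ with $\fiber{\cdot}{\cdot}$ makes each $\gamma_\M(e_0)\gamma_\M(e_j)$ self-adjoint with respect to $\scalar{\cdot}{\cdot}$, so the symbol of $A(t)$ is Hermitian and, after one integration by parts on $\Sigma_t$, $A(t)+A(t)^\ast$ has order zero.

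Next, for a solution $\psi$ with spatially compact support and a compact time interval $[s,s+T]$, differentiate the energy $\|\psi\|_t^2:=\int_{\Sigma_t}\scalar{\psi}{\psi}\,d\vol_{h_t}$: the principal contribution of $A(t)$ drops out by the (skew-)symmetry just noted, leaving $\tfrac{d}{dt}\|\psi\|_t^2\le C_T\big(\|\psi\|_t^2+\|\psi\|_t\|F\|_t\big)$, with $C_T$ absorbing the $t$-dependence of $h_t$ and the zeroth-order terms. Grönwall gives $\|\psi(t)\|_t\le e^{C_T|t-s|}\big(\|h\|_s+\int_s^t\|F\|\big)$, hence uniqueness (set $f=0,h=0$). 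Localising the computation — integrating the divergence identity $\operatorname{div}V_\psi=2\operatorname{Re}\fiber{\Dir_\M\psi}{\psi}$ for the Dirac current $V_\psi$, defined by $g(V_\psi,\cdot)=\fiber{\gamma_\M(\cdot)\psi}{\psi}$, over a truncated causal region $J^+(K)\cap\{s\le t\le t_1\}$ and discarding the flux through its lateral null boundary, which is $\ge0$ since $\fiber{\gamma_\M(\nu)\psi}{\psi}\ge0$ for future-directed causal $\nu$ — yields finite propagation speed: $\psi|_{\Sigma_{t_1}}$ is supported in $J^+\!\big(\supp h\cup(\supp f\cap\{s\le t\le t_1\})\big)$. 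Differentiating the equation and commuting spatial derivatives through $A(t)$ produces the analogous estimates in every $H^k(\Sigma_t)$, uniformly on compact time intervals.

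For existence, local-in-time solvability of $\nabla^{\S\M}_{\partial_t}\psi=A(t)\psi+F$, $\psi|_{\Sigma_s}=h$, is the standard theory of symmetric hyperbolic systems (Friedrichs mollification or Galerkin approximation, passing to the limit with the $H^k$ estimates above), and the equation itself, expressing $\nabla^{\S\M}_{\partial_t}\psi$ through spatial derivatives, bootstraps the solution to $\Gamma(\S\M)$ near $\Sigma_s$. Alternatively — the route of the cited reference — since $\Dir_\M$ is of Dirac type, $\Dir_\M^2$ is normally hyperbolic and its Cauchy problem is globally well-posed; solving $\Dir_\M^2\psi=\Dir_\M f$ with $\psi|_{\Sigma_s}=h$ and with $\nabla^{\S\M}_{e_0}\psi|_{\Sigma_s}$ fixed, using invertibility of $\gamma_\M(e_0)$, so that $(\Dir_\M\psi-f)|_{\Sigma_s}=0$, one sets $\chi:=\Dir_\M\psi-f$, observes $\Dir_\M\chi=0$ with $\chi|_{\Sigma_s}=0$, and concludes $\chi=0$ from first-order uniqueness. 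Covering $\RR$ by short intervals on which the local construction applies and gluing by uniqueness yields a global solution; since $f\in\Gamma_c(\S\M)$ and $h\in\Gamma_c(\S\M|_{\Sigma_s})$, the domain-of-dependence estimate together with global hyperbolicity — $J(\supp f\cup\supp h)$ meets every $\Sigma_t$ in a compact set — forces $\psi\in\Gamma_{sc}(\S\M)$; and the uniform $H^k$ energy bounds together with Sobolev embedding give continuity of $(f,h)\mapsto\psi$ in the smooth topology.

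The main obstacle is not the local Cauchy theory, which is classical, but making the energy method \emph{respect the causal structure}: the plain energy identity only controls $L^2$-norms over entire Cauchy slices, which is sufficient neither to globalise the solution nor to pin down its support. The remedy is to run the divergence argument for the Dirac current over truncated causal diamonds and exploit the sign of the lateral null flux; this is precisely where global hyperbolicity enters essentially — through compactness of $J^+(p)\cap J^-(q)$ and the temporal splitting $\M\cong\RR\times\Sigma$ of Theorem~\ref{thm: Sanchez} — and it is the point demanding the most care.
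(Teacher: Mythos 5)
The statement is quoted in the paper as a citation of \cite[Theorem 5.6 and Proposition 5.7]{BaGreen} and no proof is given there; your task was therefore to supply the argument from scratch. Your route --- rewriting $\Dir_\M\psi=f$ as a \emph{symmetric hyperbolic} evolution equation $\nabla^{\S\M}_{\partial_t}\psi=A(t)\psi+F$ by multiplying with $-\gamma_\M(e_0)$, verifying Hermiticity of the principal symbol of $A(t)$ with respect to the auxiliary positive-definite fibre product $\fiber{\gamma_\M(e_0)\,\cdot\,}{\cdot}$, and then running energy estimates together with the Dirac-current divergence argument for finite propagation speed --- is correct and is precisely the strategy of the cited reference (Section~5 of B\"ar's paper is about symmetric hyperbolic systems; Theorem~5.6 is the well-posedness statement for such systems and Proposition~5.7 checks that $\Dir_\M$ fits the framework). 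The algebra you carry out (e.g.\ $\gamma_\M(e_0)^2=\Id$ from $g(e_0,e_0)=-1$; self-adjointness of $\gamma_\M(e_0)\gamma_\M(e_j)$ with respect to $\scalar{\cdot}{\cdot}$; $\nabla^{\S\M}_{e_0}=\beta^{-1}\nabla^{\S\M}_{\partial_t}$, hence $F=-\beta\gamma_\M(e_0)f$) is accurate, and your observation that the lateral null flux term must be discarded via the sign of $\fiber{\gamma_\M(\nu)\psi}{\psi}$ for future-pointing causal $\nu$ is indeed where the causal structure and global hyperbolicity enter. You also correctly note the alternative second-order route (reducing to $\Dir_\M^2$, which is normally hyperbolic, and recovering the first-order problem via uniqueness), which is the method of the older B\"ar--Ginoux--Pf\"affle treatment.

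Two small points you may wish to sharpen if this were to be written out in full. First, your phrase ``after one integration by parts on $\Sigma_t$, $A(t)+A(t)^\ast$ has order zero'' is the right conclusion, but it is worth saying explicitly that the $L^2(\Sigma_t)$-adjoint is taken with respect to the \emph{time-dependent} inner product using $d\mathrm{vol}_{h_t}$, and that the variation of this volume form contributes an additional zeroth-order term which you correctly absorb into the constant $C_T$. Second, the sign bookkeeping in the causal-diamond divergence argument (which you flag as ``demanding the most care'') deserves one extra sentence: with signature $(-,+,+,+)$ the vector field $V_\psi$ defined by $g(V_\psi,u)=\fiber{\gamma_\M(u)\psi}{\psi}$ is past-directed causal, and the Lorentzian divergence theorem carries a sign flip on timelike boundary pieces; getting these two sign conventions consistent is exactly what makes the lateral flux term droppable. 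Neither point is a gap --- the logic is sound --- they are just places where the details are easy to garble.
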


It is well known that,  as a by-product of the well-posedness of the Cauchy problem, one obtains the existence of the advanced and the retarded Green operators. In particular, their differences, called the {\it causal propagator}, characterises the solution space of $\Dir_\M$.

\begin{proposition}[\protect{\cite[Theorem 5.9 and Theorem 3.8]{BaGreen}}]\label{prop:Green}
\noindent
\begin{enumerate}[(a)]
\item
The Dirac operator $\Dir_\M$ is Green-hyperbolic,  \ie there exist linear maps $\G^\pm\colon \Gamma_{c}(\S\M)  \to \Gamma_{sc}(\S\M)$, called 
\textit{advanced ($-$) and retarded (+) Green operators},
 satisfying
\begin{enumerate}[(i)]
\item
$(\Dir_\M\circ \G^\pm) f=f$ and $(\G^\pm\circ\Dir_\M)f=f$ for all $ f \in 
\Gamma_{c}(\S\M)$;
\item 
$\supp(\G^\pm f ) \subset J^\pm (\supp f )$ 
for all $f\in\Gamma_{c}(\S\M)$.
\end{enumerate}
\item Let $\mathsf{G}:=\mathsf{G}^+-\mathsf{G}^-$ be the causal propagator. Then the sequence
\begin{align*}
	0\to
	\Gamma_{c}(\S\M)
	\stackrel{\Dir_\M}{\to}\Gamma_{c}(\S\M)
	\stackrel{\mathsf{G}}{\to}\Gamma_{sc}(\S\M)
	\stackrel{\Dir_\M}{\to}\Gamma_{sc}(\S\M)\to 0
\end{align*}
is exact. In particular, 
\begin{align*}
	\sol_{sc}(\Dir):=\Gamma_{sc}(\S\M)\cap\ker\Dir_M
	\simeq{\rm ran\,}\G\simeq \rquot{\Gamma_{c}(\S\M)}{\Dir_\M\Gamma_{c}(\S\M)}\,.
\end{align*}
\end{enumerate}
\end{proposition}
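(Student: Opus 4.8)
The plan is to derive both statements from the well-posedness of the Cauchy problem for $\Dir_\M$ just recorded, combined with the causal (finite-propagation-speed) structure of $\M$, along the lines of the standard theory of Green-hyperbolic operators. For part (a): given $f\in\Gamma_c(\S\M)$, use the splitting $\M\simeq\RR\times\Sigma$ to pick a Cauchy hypersurface $\Sigma_{t_-}$ lying strictly to the past of the compact set $\supp f$, and define $\G^+ f\in\Gamma_{sc}(\S\M)$ to be the unique solution of $\Dir_\M\psi=f$ with $\psi|_{\Sigma_{t_-}}=0$ furnished by the well-posedness theorem. Then $\Dir_\M\circ\G^+=\Id$ holds by construction, and the support property $\supp(\G^+ f)\subseteq J^+(\supp f)$ is finite propagation speed, which I would establish by a standard domain-of-dependence/energy argument exploiting the symmetry of $\gamma_\M$ with respect to $\fiber{\cdot}{\cdot}$ and the past-directed timelike character of $dt^\sharp$: outside $J^+(\supp f)$ the field $\G^+ f$ solves a homogeneous equation with zero data on a Cauchy surface, hence vanishes. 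Independence of $\G^+ f$ from the choice of $\Sigma_{t_-}$, the identity $\G^+\circ\Dir_\M=\Id$ on $\Gamma_c$, and uniqueness of $\G^+$ all follow from a single observation: a section solving the homogeneous Dirac equation whose support lies in $J^+$ of a compact set must vanish (restrict to a Cauchy surface far enough in the past, where the data is zero, and invoke uniqueness). The operator $\G^-$ is constructed symmetrically, propagating to the past.

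For part (b) I would check exactness at each spot. (i) $\Dir_\M$ is injective on $\Gamma_c$: a compactly supported solution of the homogeneous equation vanishes, by uniqueness on a Cauchy surface below its support. (ii) $\ker(\G|_{\Gamma_c})=\Dir_\M\Gamma_c$: the inclusion $\supseteq$ is immediate from $\G^\pm\circ\Dir_\M=\Id$; conversely, if $\G f=0$ then $u:=\G^+ f=\G^- f$ has support in $J^+(\supp f)\cap J^-(\supp f)$, compact by global hyperbolicity, so $u\in\Gamma_c$ and $\Dir_\M u=\Dir_\M\G^+ f=f$. (iii) $\ker(\Dir_\M|_{\Gamma_{sc}})=\G\Gamma_c$: the inclusion $\supseteq$ follows from $\Dir_\M\circ\G^\pm=\Id$; for $\subseteq$, given $\psi\in\Gamma_{sc}$ with $\Dir_\M\psi=0$, choose a time cut-off $\chi=\chi(t)$ equal to $0$ for $t\le t_0$ and $1$ for $t\ge t_1$, put $f:=\Dir_\M(\chi\psi)=[\Dir_\M,\chi]\psi$, observe that $\supp f\subseteq\{t_0\le t\le t_1\}\cap\supp\psi$ is compact, and use uniqueness to identify $\G^+ f=\chi\psi$ and $\G^- f=(\chi-1)\psi$, whence $\G f=\psi$. (iv) $\Dir_\M:\Gamma_{sc}\to\Gamma_{sc}$ is surjective: split $g=\chi g+(1-\chi)g$, note that $\chi g$ has past-compact support and $(1-\chi)g$ future-compact support, apply the natural extensions of $\G^+$ and $\G^-$ respectively, and observe that the sum of the resulting solutions is spacelike-compact with image $g$ under $\Dir_\M$. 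The two displayed isomorphisms are then formal consequences: $\sol_{sc}(\Dir)={\rm ran\,}\G$ by (iii), and ${\rm ran\,}\G\simeq\Gamma_c(\S\M)/\Dir_\M\Gamma_c(\S\M)$ by (ii) together with the first isomorphism theorem.

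The only genuinely analytic ingredient is the causal support estimate $\supp(\G^\pm f)\subseteq J^\pm(\supp f)$, together with its companion uniqueness statement for solutions with past- or future-compact support, and I expect this to be the main obstacle; it is handled by the usual energy/domain-of-dependence method exploiting the symmetry of $\Dir_\M$ and the global hyperbolicity of $\M$. Once these are in hand, part (b) is a diagram chase; the only points requiring mild care are the cut-off arguments in (iii) and (iv) — one must verify that the constructed inhomogeneities have compact support confined to the correct causal region, and make precise the extension of $\G^\pm$ to sections with past- or future-compact support used for surjectivity.
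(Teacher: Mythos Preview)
Your proposal is correct and follows the standard route to these results. Note, however, that the paper does not actually supply a proof of this proposition: it is stated with a citation to \cite[Theorem 5.9 and Theorem 3.8]{BaGreen} and treated as background input from the literature. What you have sketched is essentially the argument one finds in that reference (and in the broader theory of Green-hyperbolic operators), so there is no discrepancy to discuss --- your outline would serve perfectly well as a self-contained justification, but the paper simply defers to the cited source.
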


\section{Geometric construction of global propagators}
\label{sec:Geom constr prop}
This section is the core of the paper. As a preliminary step, we will first reduce the Cauchy problem for the Dirac operator to the Cauchy problem for a one-parameter (time-dependent) family of Riemannian Dirac-type operators. Then, we will introduce the notion of positive/negative Cauchy evolution operators and we discuss the existence of `Lorentzian' (time-dependent) pseudodifferential projections that will implement the initial conditions of our evolution operators construction. At that point, we will have at our disposal all the ingredients to formulate an explicit geometric algorithm to construct the integral kernel of the positive/negative Cauchy evolution operators. 

\subsection{Geometric identification}\label{sec:ident}

The aim of this section is to reduce the Cauchy problem for the Lorentzian Dirac operator to the Cauchy problem for a one-parameter family of Riemannian Dirac operators defined on the reference submanifold $\Sigma$ of $\M$. To this end, we will mimic the arguments from~\cite{DiracAPS} (see also~\cite{Bar1,MollerMIT,IBVPg,DiracMIT}).
 Let us denote by
\begin{itemize}
\item $\nabla^{\S\Sigma_t}$ the spin connection on  $\S\Sigma_t$, obtained as lift of the Levi-Civita connection on $\T\Sigma_t$ associated with the Riemannian metric $h_t$ (recall \eqref{GHmetric});
\item $\gamma_{\Sigma_t}$ the Clifford multiplication on $\Sigma_t$;
\item $t \mapsto \Dir_t$  the one-parameter family of Riemannian Dirac operators on $(\Sigma, h_t)$ defined by 
$$\Dir_t\define \gamma_{\Sigma_t}\circ \nabla^{\S\Sigma_t}\,.$$
\end{itemize}
As shown in~\cite[Section 3]{Bar1}, the spin connection $\nabla^{\S\M}$ on $\S\M$ and the spin connection $\nabla^{\S\Sigma_t}$ on $\S\Sigma_t$ are related as
\begin{align*}
	\nabla^{\S\M}_{e_j}\psi
	=\nabla^{\S\Sigma_t}_{e_j}\psi
	-\frac{1}{2}\gamma_\M(e_0)\gamma_\M(\nabla^\M_{e_j}e_0)\psi\,,
\end{align*}
for any $\psi\in\Gamma(\S\M)$ and any ${e_j}\in\Gamma(\T\M)$ tangent to $\Sigma_t$. 
It then follows that
\begin{align}\label{Eq: DM,Dbullet relation}
	\gamma_\M(e_0) \Dir_\M= 
	 \begin{pmatrix} - \nabla^{\S\M}_{e_0} - i \Dir_t +\frac{3}{2}H_t &0\\0& - \nabla^{\S\M}_{e_0} +i \Dir_t +\frac{3}{2}H_t\end{pmatrix},
\end{align}
where $H_t\define -\frac{1}{3}\sum_{j=1}^3h_t(e_j,\nabla^{M}_{e_j}e_0)$ is the mean curvature of $\Sigma_t$. 
Using the above identity, the reduction of the Cauchy problem is achieved in a two step procedure: 

\medskip

{\bf Step 1}.  First, we perform a conformal transformation of the metric 
\[
 g \mapsto \hat{g}:= \beta^{-2}g=-dt^2 + \beta^{-2}h_t =: -dt^2 + \hat{h}_t \,.
\]
As a result, one immediately obtains the following.
 \begin{lemma}\label{lem:1}
 The Cauchy problem for Dirac operator $\Dir_\M$ on $(\M,g)$ is equivalent to the Cauchy problem for the Dirac operator $\hat\Dir_\M$ on $(\M,\hat{g})$, namely there is a one-to-one correspondence $\hat{\psi}=\beta \psi$ between solutions of the Cauchy problems
 \begin{align*}
		\begin{cases}
		 \Dir_\M \psi=0\\
		\psi|_{\Sigma_0}=\psi_0 
		\end{cases}
	 \qquad \Longleftrightarrow \qquad
		\begin{cases}
		\hat \Dir_\M \hat\psi=0\\
		\hat\psi|_{\Sigma_0}=\left. \beta^{\frac{3}{2}}\right|_{t=0}\psi_0 
		\end{cases},
	\end{align*}	
$\psi_0\in  \Gamma_{\mathrm{c}}(\S\M|_{\Sigma_0})$.
 \end{lemma}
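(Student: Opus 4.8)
The plan is to read off Lemma~\ref{lem:1} from the \emph{conformal covariance} of the Dirac operator, which is a purely local statement and therefore holds verbatim in Lorentzian signature. First I would make precise the identification of the two spinor bundles. Since $\M$ is parallelisable, the spinor bundle $\S\M$ of $(\M,g)$ and that of $(\M,\hat g)$ are both trivial $\CC^4$-bundles, and the rescaling $e_j\mapsto\beta\, e_j$ carries a positively oriented and positively time-oriented $g$-orthonormal framing to a $\hat g$-orthonormal one — with $\beta e_0=\partial_t$ now playing the role of the $\hat g$-unit normal, so that the conformal change also normalises the lapse — because $\hat g(\beta e_j,\beta e_k)=\beta^{2}\beta^{-2}\,g(e_j,e_k)=g(e_j,e_k)$. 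This induces a canonical $\CC^4$-linear bundle isomorphism under which the two Clifford multiplications and the two (indefinite) spin inner products correspond, up to the obvious rescaling of vectors; through it the relation $\hat\psi=\beta\psi$ becomes unambiguous — trivialise $\psi$, multiply by the scalar $\beta$, and reinterpret the product as a section of the $\hat g$-spinor bundle.

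Next I would invoke the conformal covariance formula for the Dirac operator — or, equivalently, re-derive it from the conformal change-of-connection formula for the Levi-Civita connection lifted to $\S\M$ — in the form $\hat\Dir_\M\big(\beta^{w}\psi\big)=\beta^{w'}\,\Dir_\M\psi$, with conformal weights $w,w'$ fixed by $\dim\M=4$ and $\hat g=\beta^{-2}g$. The only feature that matters is that the right-hand side is a nowhere-vanishing scalar multiple of $\Dir_\M\psi$, so that $\Dir_\M\psi=0$ precisely when $\hat\Dir_\M(\beta^{w}\psi)=0$. Since $\beta$ is smooth and strictly positive, $\psi\mapsto\beta^{w}\psi$ is a bundle automorphism preserving supports, hence a bijection $\Gamma_{sc}(\S\M)\to\Gamma_{sc}(\S\M)$; restricting a solution to $\Sigma_0$ then produces the matching of initial data $\hat\psi|_{\Sigma_0}=\beta|_{t=0}\,\psi_0$. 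Combining this with well-posedness of the Cauchy problem for $\Dir_\M$ (the theorem recalled from \cite{BaGreen}) and for $\hat\Dir_\M$ — which applies since $(\M,\hat g)$ is again globally hyperbolic with the same compact Cauchy hypersurface — yields the asserted one-to-one correspondence between solutions.

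The main obstacle is not conceptual but a matter of careful bookkeeping: one must fix the spinor-bundle identification so that Clifford multiplication and the spin inner product match exactly, and then propagate the conformal weight through the lift of the Levi-Civita connection to $\S\M$ so that the precise power of $\beta$ appearing in $\hat\psi=\beta\psi$ — rather than any other power — drops out with the correct normalisation.
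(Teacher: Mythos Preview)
Your proposal is correct and follows essentially the same route as the paper: both arguments identify the spinor bundles via the frame rescaling $e_j\mapsto\beta e_j$ and then invoke the conformal covariance of the Dirac operator to conclude. The paper is simply more explicit, writing down the transformation laws $\hat\gamma_\M(v)=\beta^{-1}\gamma_\M(v)$ and the spin-connection formula directly, and deducing the concrete identity $\hat\Dir_\M=\beta^{2}\,\Dir_\M\,\beta^{-1}$ (which is exactly the ``bookkeeping'' you flag as the only obstacle and which fixes your abstract weights $w,w'$ to $1,2$); from there $\hat\Dir_\M(\beta\psi)=\beta^{2}\Dir_\M\psi$ gives the equivalence immediately.
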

 \begin{proof}
It is easy to see that the Clifford multiplication $\hat{\gamma}_\M$ and the spin connection $\hat{\nabla}^{\S\M}$ associated with $(\M,\hat g)$ are related to those associated with $(\M,g)$ as
\begin{align}
\label{final edits eq 1}
	\hat{\gamma}_\M(v)=\beta\gamma_\M(v)\,,\qquad
	\hat{\nabla}^{\S\M}_v=\nabla^{\S\M}_v
	+\frac{\beta}{2}\gamma_\M(X)\gamma_\M(\nabla^\M \beta^{-1})
	-\frac{\beta}{2}v(\beta^{-1})\,,
\end{align}
whereas the inner product $\fiber{\cdot}{\cdot}$ is invariant under conformal transformations.
The Dirac operators $\hat \Dir_\M$ and $\Dir_\M$ are then related as
\begin{align}\label{Eq: conformal Dirac operator}
	\hat{\Dir}_\M=\beta^{\frac{5}{2}}\,\Dir_\M\, \beta^{-\frac{3}{2}}\,.
\end{align}
The latter implies that if $\Dir_\M\psi=0$,  then $\hat{\Dir}_\M(\beta^{\frac{3}{2}}\,\psi)=0$,  which concludes the proof.
 \end{proof}
 
\begin{remark}
Note that $\hat{e}_j:=\beta \,e_j$, $j=0,1,2,3$,  is a framing on $(\M, \hat g)$.  Formulae \eqref{Eq: DM,Dbullet relation},  \eqref{Eq: conformal Dirac operator} and \eqref{final edits eq 1} imply 
\[
\hat{\Dir}_t=\beta^{\frac{5}{2}}\,\Dir_t\, \beta^{-\frac{3}{2}}.
\]
Moreover,  the vector field $\hat e_0=\partial_t$ is geodesic on $(\M,\hat{g})$,  namely, $\hat\nabla_{\hat e_0}\hat e_0=0$.
\end{remark} 
 
\medskip

{\bf Step 2}.  Next,  we consider the Dirac operator $\hat\Dir_\M$ \eqref{Eq: conformal Dirac operator} on the globally hyperbolic manifold $(\M,\hat{g})$. 
Recalling that the Dirac operator $\hat\Dir_\M$ on $(\M, \hat g)$ decomposes as~\eqref{Eq: DM,Dbullet relation},
let us turn $\hat\Dir_\M$ into an operator on $C^\infty(\RR,\S\M|_{\Sigma_0})$. To this end, we
 identify the spinor bundles $\S \Sigma_t$ on $(\Sigma_t,\hat{h}_t$) associated with $\Sigma_t$, $t\in \RR$,  by means of parallel transport $\wp_{0,t}\colon \S\M|_{\Sigma_t}\to \S\M|_{\Sigma_0}$ along the integral curves of the vector field $\partial_t$.
 The map $\wp_{0,t}$ is a linear isometry between spinor bundles which preserves the positive definite form $\fiber{\cdot}{\gamma_\M(\partial_t)\,\cdot}$ because $\partial_t$ is geodesic.  In order to lift the map $\wp_{0,t}$  to an isometry of Hilbert spaces, let $\rho\colon \M \to \mathbb R$ be the nonnegative smooth function defined by 
\begin{equation}
\label{definition funny rho}
 d\vol_{\Sigma_t} = \rho^2 \,d\vol_{\Sigma_0}\,. 
\end{equation} 
Then
\begin{align}\label{Eq: identification isomorphism}
	V_t\colon\Gamma(\S\M|_{\Sigma_t})\to \Gamma(\S\M|_{\Sigma_0})\,,\qquad
	V_t\psi\define \rho\, \wp_{0,t}\psi\,,
\end{align}
defines an isometry between $L^2(\S\M|_{\Sigma_t})$ and $L^2(\S\M|_{\Sigma_0})$. Indeed, 
\begin{align*}
	\scalar{V_t\psi_1}{V_t\psi_2}_{L^2(\S\M|_{\Sigma_0})}
	&=\int_\Sigma \fiber{\wp_{0,t}\psi_1}{\gamma_\M(\partial_t)\wp_{0,t}\psi_2}\rho^2 d\vol_{\Sigma_0}
	\\&=\int_{\Sigma_t}\fiber{\psi_1}{\gamma_\M(\partial_t)\psi_2}d\vol_{\Sigma_t}
	=\scalar{\psi_1}{\psi_2}_{L^2(\S\M|_{\Sigma_t})}\,.
\end{align*}

\begin{remark}
Of course, for ultrastatic globally hyperbolic manifold $(\M,\hat{g}=-dt^2+\hat{h})$ the linear isometry $V_t$ reduces to the identity map for all $t$. 
\end{remark}

One has the following.
\begin{lemma}\label{lem:redDir}
Let  $\hat{\Dir}_t$ be the Riemannian Dirac operator on $(\Sigma_t,\hat{h}_t)$ and let $V_t:L^2(\S\M|_{\Sigma_t}) \to L^2(\S\M|_{\Sigma_0})$ be the isometry defined by~\eqref{Eq: identification isomorphism}. Then, for any $(x,\eta)\in\T^*\Sigma$, the principal symbol of $\overline{\Dir}_t := V_t \hat\Dir_t V_t^{-1}$ satisfies
 $$ (\overline{\Dir}_t)_\mathrm{prin}(x,\eta)= (\hat{\Dir}_0)_\mathrm{prin} (x,\kappa_{0,t} \eta)\,,$$
 where 
 $\kappa_{0,t}:\T^*\M|_{\Sigma_t}\to\T^*\M|_{\Sigma_0}$ is the parallel transport along the integral curves of $\partial_t$ on $(\M,\hat{g})$.
\end{lemma}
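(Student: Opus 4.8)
The plan is to strip the isometry $V_t = \rho\,\wp_{0,t}$ down to its constituent pieces and reduce the statement to a purely algebraic fact about parallel transport and Clifford multiplication. First I would note that, since only the principal symbol is at stake, the conjugation by the positive scalar function $\rho$ (a multiplication operator, hence of order $0$) can be discarded: $\rho\,\hat\Dir_t\,\rho^{-1} - \hat\Dir_t = \rho\,[\hat\Dir_t,\rho^{-1}]$ is of order $0$, so $(\overline{\Dir}_t)_{\mathrm{prin}}(x,\eta) = \wp_{0,t}\circ(\hat\Dir_t)_{\mathrm{prin}}(x,\eta)\circ\wp_{0,t}^{-1}$, where on the right $\wp_{0,t}$ denotes the fibrewise parallel transport $\S\M|_{(t,x)}\to\S\M|_{(0,x)}$ along the integral curve of $\partial_t$. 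Likewise, all the lower-order contributions in \eqref{Eq: DM,Dbullet relation} (the mean-curvature term, the conformal corrections) play no role.

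Next I would recall that the principal symbol of a Riemannian Dirac operator is Clifford multiplication by the metric-dual covector, i.e. $(\hat\Dir_t)_{\mathrm{prin}}(x,\eta) = i\,\gamma_{\Sigma_t}(\eta^{\sharp_{\hat h_t}})$. The key geometric input is that $\gamma_\M$ is parallel for the pair $(\nabla^{\S\M},\nabla^\M)$, so parallel transport along $\partial_t$ intertwines Clifford multiplication: $\wp_{0,t}\,\gamma_\M(v)\,\wp_{0,t}^{-1} = \gamma_\M(\mathsf{P}_{0,t}v)$ for $v\in\T\M|_{(t,x)}$, with $\mathsf{P}_{0,t}$ the parallel transport of tangent vectors along $\partial_t$ on $(\M,\hat g)$. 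Since $\partial_t$ is $\hat g$-geodesic and unit (Remark after Lemma~\ref{lem:1}), $\mathsf{P}_{0,t}$ fixes $\partial_t$ and, being a metric isometry, preserves the orthogonal splitting $\T\M|_{\Sigma_t}=\RR\partial_t\oplus\T\Sigma_t$; thus it restricts to an isometry $\T\Sigma_t\to\T\Sigma_0$ and is compatible with the chirality block decomposition of $\S\M$, which lets me replace $\gamma_\M$ by the induced $\gamma_{\Sigma_t}$. Finally, since $\mathsf{P}_{0,t}$ is a metric isometry it commutes with the musical isomorphisms, so $\mathsf{P}_{0,t}(\eta^{\sharp_{\hat h_t}}) = (\kappa_{0,t}\eta)^{\sharp_{\hat h_0}}$. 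Chaining these identities yields
\[
(\overline{\Dir}_t)_{\mathrm{prin}}(x,\eta) = i\,\gamma_{\Sigma_0}\big((\kappa_{0,t}\eta)^{\sharp_{\hat h_0}}\big) = (\hat\Dir_0)_{\mathrm{prin}}(x,\kappa_{0,t}\eta),
\]
as claimed.

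The routine steps --- discarding the $\rho$-conjugation and writing down the Dirac principal symbol --- are immediate. The hard part will be the bookkeeping that connects the \emph{extrinsic} transport $\wp_{0,t}$ along $\partial_t$ inside $\M$ with the \emph{intrinsic} Clifford data $\gamma_{\Sigma_t}$ on the leaves $\Sigma_t$: one must verify that the splitting of $\T\M$ (and the corresponding chirality splitting of $\S\M$) used in \eqref{Eq: DM,Dbullet relation} is indeed carried by $\mathsf{P}_{0,t}$, which is where the geodesic and normal character of $\partial_t$ enters decisively, and one must keep track of the orientation and time-orientation conventions so that the factor of $i$ and the two chirality blocks are matched correctly on $\Sigma_0$ and $\Sigma_t$.
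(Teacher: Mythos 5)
Your proof is correct and takes essentially the same route as the paper's: both reduce the claim to the fact that the spin-parallel transport $\wp_{0,t}$ along the geodesic field $\partial_t$ intertwines the intrinsic Clifford multiplications on $\Sigma_t$ and $\Sigma_0$, followed by the observation that parallel transport commutes with the musical isomorphisms. The paper invokes \cite[Lemma~3.7]{defarg} for the Clifford-intertwining step, whereas you derive it from the ambient $\gamma_\M$-compatibility together with the geodesic character of $\partial_t$, and you also make explicit the (order-zero, hence negligible) $\rho$-conjugation that the paper leaves implicit.
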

The lemma above motivates the following definition.
\begin{definition}\label{def:reduced Dirac}
We call {\em reduced Dirac operator $\overline{\Dir}_\M$ for the metric $g$}  the operator defined by
$$\overline{\Dir}_\M:= \begin{pmatrix}- i\partial_t + \overline{\Dir}_t &0\\0& - i\partial_t - \overline{\Dir}_t\end{pmatrix} :C^\infty\big(\RR,\Gamma(\S\M|_{\Sigma_0})\big)\to C^\infty\big(\RR,\Gamma(\S\M|_{\Sigma_0})\big) \,. $$
\end{definition}
\begin{proof}[Proof of Lemma~\ref{lem:redDir}]
Since the map $\wp_{0,t}$ preserves the Clifford multiplication 
 \ie for any 
$v\in\Gamma(\T\M)$ and $\psi_0\in\Gamma(\S\M|_{\Sigma_t})$ we have
 $$\gamma_{\Sigma_0}(\kappa_{0,t} v) (\wp_{0,t}  \psi) =\wp_{0,t} \,\big(\gamma_{\Sigma_t}(v)\psi\big)\,,$$
 (see~\cite[Lemma 3.7]{defarg} for a proof),
then for every $(x,\eta)\in\T^*\Sigma_t$  we have 
	\begin{align*}
(\overline{\Dir}_t)_\mathrm{prin}(x,\eta) &= \wp_{0,t} (\hat{\Dir}_t)_\mathrm{prin}(x,\eta) \wp_{t,0} =  \wp_{0,t} \gamma_{\Sigma_t}(\eta^{\sharp_t}) \wp_{t,0} \\
&= \gamma_{\Sigma_0}(\kappa_{0,t}\eta^{\sharp_t})  = (\hat{\Dir}_0)_\mathrm{prin} (x,(\kappa_{0,t} \eta^{\sharp_t})^{\flat_0})  = (\hat{\Dir}_0)_\mathrm{prin} (x,\kappa_{0,t}\eta)\,,
	\end{align*}
	where here $\sharp_t$ is the musical isomorphism associated with $\hat h_t$. In the last equality we used that, for all $\eta\in \T_x^*\M$ and $v\in \T_x\M$, 
\begin{align*}
(\kappa_{0,t}\eta^{\sharp_t})^{\flat_0}(v)|_x
&=h_0(\kappa_{0,t}\eta^{\sharp_t},v)|_x
=\hat g_\M(\kappa_{0,t}\eta^{\sharp_t},v)|_{(0,x)}
=\hat g_\M(\eta^{\sharp_t},\kappa_{t,0}v)|_{(t,x)}
\\&=h_t(\eta^{\sharp_t},\kappa_{t,0}v)|_x
=\eta(\kappa_{t,0}v)|_x
=[\kappa_{0,t}\eta](v)|_x\,,
\end{align*}
where, with slight abuse of notation,  $\kappa_{0,t}\eta$ denotes the parallel transport of the covector $\eta$ along the unique integral curve of $\partial_t$ connecting $(t,x)$ and $(0,x)$. 
\end{proof}

We are finally in a position to state and prove the most important result of this subsection.
\begin{theorem}\label{thm:identif}
Let $(\M,g=-\beta^2dt^2+h_t)$ be a globally hyperbolic manifold.
The Cauchy problem 
	for the Lorentzian Dirac operator $\Dir_\M$ on $(\M,g)$
	is equivalent to the Cauchy problem  for reduced Dirac operator $\overline{\Dir}_\M$ for the metric $g$, namely,  
there exists a one-to-one correspondence $\overline{\psi}:=\beta V_t\psi \in C^\infty\big(\RR,\Gamma(\S\M|_{\Sigma_0})\big)$ between 	
	solutions $\psi\in\Gamma(\S\M)$ of 
	$$\begin{cases}
	\Dir_\M\psi=0\\
	\psi|_{\Sigma_0}=\psi_0
	\end{cases}
	$$
	and solutions $\overline\psi \in C^\infty\big(\RR,\Gamma(\S\M|_{\Sigma_0})\big)$ of
	$$\begin{cases}
	\overline\Dir_\M \overline\psi=0\\
	\overline\psi|_{\Sigma_0}=\beta\psi_0
	\end{cases}
	$$
for 	$\psi_0\in \Gamma(\S\M|_{\Sigma_0})$.
\end{theorem}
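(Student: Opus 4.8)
The plan is to \emph{chain} the two reductions performed above (Step~1 and Step~2). By Lemma~\ref{lem:1}, the Cauchy problem $\Dir_\M\psi=0$, $\psi|_{\Sigma_0}=\psi_0$, is equivalent to $\hat\Dir_\M\hat\psi=0$, $\hat\psi|_{\Sigma_0}=\beta|_{\Sigma_0}\psi_0$, via $\hat\psi=\beta\psi$, and on $(\M,\hat g=-dt^2+\hat h_t)$ the unit normal $\hat e_0=\partial_t$ is \emph{geodesic}. It therefore suffices to show that the Cauchy problem for $\hat\Dir_\M$ on $(\M,\hat g)$ is equivalent to that for $\overline\Dir_\M$ via $\overline\psi=V_t\hat\psi$: since $V_t$ is a scalar multiple of a bundle isometry, $V_t\hat\psi=\beta V_t\psi$, and the matching of Cauchy data will follow from $V_0=\Id$, which holds because $\rho|_{t=0}=1$ by \eqref{definition funny rho} and $\wp_{0,0}=\Id$.

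For the remaining equivalence, I would first note that $\hat\gamma_\M(\hat e_0)$ squares to $-\hat g(\hat e_0,\hat e_0)\Id=\Id$ and is hence invertible, so $\hat\Dir_\M\hat\psi=0$ iff $\hat\gamma_\M(\hat e_0)\hat\Dir_\M\hat\psi=0$. Applying the splitting \eqref{Eq: DM,Dbullet relation} to $(\M,\hat g)$ expresses $\hat\gamma_\M(\hat e_0)\hat\Dir_\M$ as a $2\times 2$ block-diagonal operator with entries $-\hat\nabla^{\S\M}_{\hat e_0}\mp i\hat\Dir_t+\tfrac32\hat H_t$, where $\hat H_t$ is the mean curvature of $\Sigma_t$ in $\hat g$. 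Conjugating by $V_t=\rho\,\wp_{0,t}$: since $\hat e_0=\partial_t$ is geodesic, $\wp_{0,t}$ turns $\hat\nabla^{\S\M}_{\hat e_0}$ into the ordinary $t$-derivative in the parallel frame, so writing a section as $\rho^{-1}\wp_{t,0}(\cdot)$ and differentiating gives
\begin{equation*}
V_t\big(-\hat\nabla^{\S\M}_{\hat e_0}+\tfrac32\hat H_t\big)V_t^{-1}=-\partial_t+\big(\partial_t\log\rho+\tfrac32\hat H_t\big),
\end{equation*}
whereas $V_t(\mp i\hat\Dir_t)V_t^{-1}=\mp i\overline\Dir_t$ by the definition of $\overline\Dir_t$ in Lemma~\ref{lem:redDir}.

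The heart of the proof --- and the step I expect to require the most care --- is that the zeroth-order term $\partial_t\log\rho+\tfrac32\hat H_t$ vanishes identically. This is a purely geometric identity: in local coordinates on $\Sigma$, relation \eqref{definition funny rho} reads $\sqrt{\det\hat h_t}=\rho^2\sqrt{\det\hat h_0}$, while computing the trace of the second fundamental form of the leaves of the Gaussian-type foliation of $(\M,\hat g)$ gives $\partial_t\log\sqrt{\det\hat h_t}=-3\hat H_t$ with the sign convention for $\hat H_t$ fixed after \eqref{Eq: DM,Dbullet relation}; combining the two yields $2\,\partial_t\log\rho=-3\hat H_t$. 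Hence $V_t\,\hat\gamma_\M(\hat e_0)\hat\Dir_\M\,V_t^{-1}=\mathrm{diag}\big(-\partial_t-i\overline\Dir_t,\,-\partial_t+i\overline\Dir_t\big)=-i\,\overline\Dir_\M$, so, $V_t$ being invertible, $\hat\Dir_\M\hat\psi=0\iff\overline\Dir_\M(V_t\hat\psi)=0$. Finally, $\psi\mapsto\overline\psi=\beta V_t\psi$ is a bijection on smooth sections, being a composition of the invertible maps $\psi\mapsto\beta\psi$ and $\hat\psi\mapsto V_t\hat\psi$; together with the chain of equivalences and the identification of Cauchy data via $V_0=\Id$, this establishes the asserted one-to-one correspondence between the two solution spaces.
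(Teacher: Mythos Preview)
Your proof is correct and follows essentially the same approach as the paper's own argument: chain Lemma~\ref{lem:1} with the conjugation by $V_t$, compute $V_t\hat\nabla^{\S\M}_{\partial_t}V_t^{-1}$ using that $\partial_t$ is geodesic, and eliminate the mean-curvature term via the identity $2\,\partial_t\log\rho=-3\hat H_t$ coming from \eqref{definition funny rho}. Your presentation is in fact slightly more explicit about the role of $V_0=\Id$ in matching the Cauchy data and about why $\hat\gamma_\M(\hat e_0)$ is invertible, but the substance is the same.
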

\begin{proof}
Since $\M$ is an even dimensional manifold, then $\S\M|_{\Sigma_t}=\S\Sigma_t\oplus\S\Sigma_t$ (see \eg~\cite{Bar1}), which implies that $\Gamma(\S\M)\simeq C^\infty\big(\RR,\Gamma(\S\M|_{\Sigma_t})\big)\simeq  C^\infty\big(\RR,\Gamma(\S\Sigma_t)\oplus \Gamma(\S\Sigma_t)\big)$.
A direct calculation leads to
\begin{align*}
	\partial_t(V_t\psi)
	=V_t\rho^{-1}\hat\nabla^{\S\M}_{\partial_t}(\rho\psi)\,,
\end{align*}
(recall~\eqref{definition funny rho} and~\eqref{Eq: identification isomorphism})
which,  in turn,  gives us
\begin{align*}
	V_t\hat\nabla^{\S\M}_{\partial_t}V_t^{-1}
	=\rho\,\partial_t\circ \rho^{-1}
	=\partial_t-\rho^{-1}[\partial_t,\rho]\,.
\end{align*}
At the same time, $\hat\nabla^{\S\M}_{\partial_t}\partial_t=0$ implies
\begin{align*}
	-nH_t=\operatorname{div}_M(\partial_t)
	=|h_t|^{-\frac{1}{2}}\partial_t|h_t|^{\frac{1}{2}}
	=2\rho^{-1}(\partial_t\rho)
	=2\rho^{-1}[\partial_t,\rho]\,,
\end{align*}
where $|h_t|\define \det h_t$.
Overall, combining~\eqref{Eq: DM,Dbullet relation} with the above identities, we get
\begin{align*}
	V_t\hat\Dir_\M V_t^{-1}
	=-\hat\gamma_\M(e_0)\begin{pmatrix}\partial_t +i \overline{\Dir}_t &0\\0& \partial_t -i \overline{\Dir}_t\end{pmatrix}\,.
\end{align*}
Hence $\hat\Dir_\M\psi=0$ is equivalent to
\begin{align*}
	\begin{pmatrix}\partial_t +i \overline{\Dir}_t &0\\0& \partial_t -i \overline{\Dir}_t\end{pmatrix}\overline{\psi}=0\,,
\end{align*}
with $\overline{\psi}\define V_t\psi\in C^\infty(\mathbb{R},\Gamma(\S\M|_\Sigma))$ which, on account of Lemma~\ref{lem:1},  concludes our proof.
\end{proof}

\subsection{The Cauchy evolution propagator}\label{sec:Cauchy ev op}
We are finally in a position to construct the {\em Cauchy evolution propagator} for the Dirac equation on our globally hyperbolic manifold $(\M,g)$.

\begin{assumption}
Further on,  we assume that $(\M,g)$ is Cauchy-compact, \ie $\M$ is foliated by closed Cauchy hypersurfaces.
\end{assumption}
\begin{remarks}
\noindent
\begin{enumerate}
\item
The compactness of $\Sigma$ ensures that $\overline{\Dir}_t$ has discrete spectrum accumulating to $\pm \infty$ and it automatically guarantees uniform estimates for the smoothing remainder in our propagator construction from Section~\ref{sec:Cauchy ev op}. Whilst this is not strictly necessary, and one may obtain similar results by imposing appropriate conditions on the metric (\eg, bounded geometry) or on the decay properties at infinity of elements of our function spaces, the amount of additional technical details needed for such generalisation is not balanced by a corresponding gain in terms of insight.  In the overall interest of clarity and readability, we refrain from removing this assumption in the current paper, with the plan to address more general settings elsewhere.

\item 
Let us point out that if $(\M,g)$ is a globally hyperbolic manifold with noncompact Cauchy hypersufaces $\Sigma$ then for any finite time interval $(t_1, t_2)\subset\RR$ there exists a globally hyperbolic manifold $\tilde{\M}$ with compact Cauchy surface $\tilde\Sigma$ such that the Cauchy problem~\eqref{Cauchysmooth} can be solved equivalently in $\tilde\M$. This can be seen as follows (see also~\cite{BaGreen}).
For sake of simplicity,  let us consider the homogeneous Cauchy data $(0,h)$ and let us define $\Sigma'$ to be the projection onto $\Sigma$ of the compact subset 
$J(\supp(h))\cap([t_1,t_2]\times\Sigma)$.
Then there exists a relatively compact open neighborhood $U$ of 
$\Sigma'$ in $\Sigma$ with smooth boundary $\partial U$.
Denoting the doubling of $U$ by $\tilde{\Sigma}$, we define $\tilde{\M}:=[t_1,t_2]\times\tilde{\Sigma}$.  By finite speed of propagation (see \eg~\cite[Proposition~3.4]{DiracMIT})  the support of $\psi$ is contained in $[t_1,t_2]\times \Sigma'$. Furthermore, since the solution is uniquely determined by the Cauchy data $(0,h)$,  any solution of the Cauchy problem in $(t_1,t_2)\times\tilde{\Sigma}$ 
is also a solution in $(t_1,t_2)\times \Sigma$.

The above argument tells us that if we are only interested in solving the Cauchy problem for the Dirac equation on a globally hyperbolic spacetime with noncompact Cauchy surface for finite times with initial data supported in an arbitrary but fixed relatively compact set, then it is enough to be able to solve the Cauchy problem on a Cauchy-compact spacetime. 
\end{enumerate}
\end{remarks}
In light of Section~\ref{sec:ident}, and of Theorem~\ref{thm:identif} in particular,  the study of the Cauchy problem for the Lorentzian Dirac operator $\Dir_\M$ on $(\M,g)$ reduces to the initial value problem for the hyperbolic systems
\begin{subequations}\label{eq:dirac reduced}
\begin{empheq}[left=\empheqlbrace]{align}
&- i \partial_t \phi_L + \overline \Dir_t \phi_L =0,\\
 & -i \partial_t \phi_R - \overline \Dir_t\phi_R =0,
\end{empheq}
\end{subequations}
subject to the initial conditions $\phi_L|_{\Sigma_0}=f_L\in C^\infty(\RR,\CC^2)$ and $\phi_R|_{\Sigma_0}=f_R\in C^\infty(\RR,\CC^2)$.
Since $\M$ is a 4-dimensional globally hyperbolic manifolds,  the bundle $\S\M$ is trivial and  $\overline \Dir_t$ is a $2\times 2$ self-adjoint matrix differential operator acting on sections of the trivial $\mathbb{C}^2$-bundle (2-columns of complex-valued scalar functions). 

\begin{notation}
\label{notation principal sumbol et al}
Before progressing, let us introduce some more notation.
\begin{enumerate}
\item 
We denote by $(\overline \Dir_s)_\mathrm{prin}$ the principal symbol of the operator $\overline \Dir_s$, viewed as an elliptic operator whose coefficients smoothly depend on $s$,  acting in $\Sigma$ (\cf Lemma~\ref{lem:redDir}).  A straightforward calculation shows that the eigenvalues of 
$(\overline \Dir_s)_\mathrm{prin}$
are
$$h^{(\pm)}(y,\eta;s):=\pm \sqrt{\hat h_s^{\alpha\beta}(y) \eta_\alpha\eta_\beta}\,.$$
We denote by $P^{(\pm)}(y,\eta;s)$ be the corresponding eigenprojections.
Of course,
\begin{equation*}\label{sum of  eigenprojections}
P^{(\pm)}(y,\eta;s)+P^{(\mp)}(y,\eta;s)=I
\end{equation*}
for every $s\in \RR$ and $(y,\eta)\in \T_y\Sigma_s\setminus \{0\}$,where $I$ is the $2\times 2$ identity matrix.

\item Further on, when we write$\mod \Psi^{-\infty}$ we mean that the pseudodifferential (resp.~Fourier integral) operator on the LHS of the equation that precedes it differs from the pseudo\-differential (resp.~Fourier integral) operator on the RHS by an integral operator with infinitely smooth kernel in all variables involved.
\end{enumerate}
\end{notation}

Reducing the original problem to the pair of first order systems \eqref{eq:dirac reduced} is particularly convenient, in that $(\overline \Dir_t)_\mathrm{prin}$ has \emph{simple} eigenvalues $h^{(\pm)}(t;x,\eta)=\pm \|\eta\|_{h_t}$.  This property is crucial in the propagator construction below.

\medskip

The task at hand is to construct,  in a global, explicit and invariant fashion,  an integral operator that maps initial data on a given Cauchy surface $\Sigma_s$ to full propagating solutions of equation~\eqref{eq:dirac reduced}.
More precisely,  given $s\in \mathbb{R}$, we will construct the (distributional) solution 
$
U_L(t;s)
$ and $
U_R(t;s)
$
of the operator-valued Cauchy problems
\begin{subequations}\label{eq:Cauchy probl U}
\begin{empheq}[left=\empheqlbrace]{align}
\label{propagator full}
\left(-i\partial_t+\overline  \Dir_t\right)U_L(t;s)=0 \mod \Psi^{-\infty}\\
\label{initial condition propagator full}
U_L(s;s)=\operatorname{Id}  \mod \Psi^{-\infty},
\end{empheq}
\end{subequations}
\begin{subequations}\label{eq:Cauchy probl U_R}
\begin{empheq}[left=\empheqlbrace]{align}
\label{propagator full Right}
\left(-i\partial_t-\overline  \Dir_t\right)U_R(t;s)=0 \mod \Psi^{-\infty}\\
\label{initial condition propagator full Right}
U_R(s;s)=\operatorname{Id}  \mod \Psi^{-\infty},
\end{empheq}
\end{subequations}
where $\operatorname{Id}$ is the identity operator in $L^2(\S\Sigma_s)\simeq L^2(\Sigma_s)\oplus L^2(\Sigma_s)$.
\begin{definition}
We call {\em Cauchy evolution propagator}  the Fourier integral operator $$U(t;s)=\begin{pmatrix}
 U_L(t;s)& 0 \\ 0 & U_R(t;s)
\end{pmatrix}$$ where $U_L(t;s)$ and $U_R(t;s)$ solve the operator-valued Cauchy problems~\eqref{eq:Cauchy probl U} and~\eqref{eq:Cauchy probl U_R}, respectively. 
\end{definition}

Without loss of generality, we shall focus on constructing $U_L$; $U_R$ can be obtained analogously. 
We will seek $U_L(t;s)$ in the form
\[
U_L(t;s)=U_L^{(+)}(t;s)+U_L^{(-)}(t;s) \mod \Psi^{-\infty},
\]
where
 $U_L^{(+)}(t;s)$ and $U_L^{(-)}(t;s)$ are required to satisfy the following conditions: 
\begin{enumerate}[(i)]
\item 
 \eqref{propagator full} individually,
 
 \item 
  the `joint' initial condition
\begin{equation}
\label{sum UL plus and minus gives identity}
U_L^{(+)}(s;s)+U_L^{(-)}(s;s)=\mathrm{Id} \mod \Psi^{-\infty}\,,
\end{equation}
\item
the `compatibility evolution conditions'
\begin{equation}
\label{initial condition equation 1}
U_L^{(\pm)}(t;t')U_L^{(\pm)}(t';s)=U_L^{(\pm)}(t;s) \mod \Psi^{-\infty}
\end{equation}
\begin{equation}
\label{initial condition equation 2}
U_L^{(\pm)}(t;t')U_L^{(\mp)}(t';s)=0 \mod \Psi^{-\infty},
\end{equation}
\end{enumerate} 
for all $t,t',s \in \RR$.

\begin{definition}
In order to conform to the Riemannian terminology,  we call  $U_L^{(+)}(t;s)$ (\resp $U_L^{(-)}(t;s)$) {\em the positive}  (\resp  {\em the negative}) {\em Dirac propagator} for the reduced Dirac operator $\overline{\Dir}_\M$. 
\end{definition}

Now, from \eqref{initial condition equation 1} we get 
\begin{equation*}
\label{initial condition equation 3}
U_L^{(\pm)}(t;t)U_L^{(\pm)}(t;s)=U_L^{(\pm)}(t;s)U_L^{(\pm)}(s;s) \mod \Psi^{-\infty},
\end{equation*} 
which, combined with~\eqref{propagator full},  implies
\begin{equation}
\label{initial condition equation 4}
-i\partial_t U_L^{(\pm)}(t;t)+[\overline\Dir_t,  U_L^{(\pm)}(t;t)]=0\mod \Psi^{-\infty}\,.
\end{equation}
Furthermore, \eqref{initial condition equation 1} and \eqref{initial condition equation 2} imply,  respectively,
\begin{align}
\label{initial condition equation 5}
& (U_L^{(\pm)}(t;t))^2=U_L^{(\pm)}(t;t) \mod \Psi^{-\infty},\\
\label{initial condition equation 6}
& U_L^{(\pm)}(t;t)U_L^{(\mp)}(t;t)=0 \mod \Psi^{-\infty}\,.
\end{align}

That one-parameter families $\{U^{(\pm)}_L(t;t)\}_{t\in \mathbb{R}}$ of pseudodifferential operators satisfying \eqref{sum UL plus and minus gives identity},  \eqref{initial condition equation 4}--\eqref{initial condition equation 6} 
exist is not at all obvious,  in that the above conditions yield a heavily overdetermined system of equations for the (matrix) symbol of $U^{(\pm)}_L(t;t)$.
In the next subsection we will show that such operators do, indeed,  exist, and construct their full symbols explicitly. 

\subsection{Lorentzian pseudodifferential projections}\label{sec:Lor pdo proj}

In this section we will construct time-dependent pseudodifferential projections which will serve as initial conditions for our propagator construction, in light of \eqref{sum UL plus and minus gives identity},  \eqref{initial condition equation 4}--\eqref{initial condition equation 6}. 

\subsubsection{Existence and characterisation}

We denote by $\Psi^k(\Sigma;\mathbb{C}^2)$ the space of polyhomogeneous pseudodifferential operators of order $k$ acting on 2-columns of complex valued scalar functions on $\Sigma$.  We also introduce refined notation for the principal symbol.  Namely, we denote by $(\cdot)_{\mathrm{prin},k}$ the principal symbol of the expression within brackets, regarded as an operator in $\Psi^{k}$.  To appreciate the need for this, consider two operators $A$ and $B$ of order $s$ with the same principal symbol. Then $A-B$ is, effectively, an operator of order $s-1$.  Hence,  $(A-B)_\mathrm{prin}=(A-B)_\mathrm{prin,s}=0$ but $(A-B)_\mathrm{prin,s-1}$ may not vanish. This refined notation will be used whenever there is risk of confusion.

The following is the main result of this subsection. Later in the paper, it will be specialised to the case $A_t=\pm \overline \Dir_t$.

\begin{theorem}
\label{theorem pseudodifferential projections}
Let $A_t\in C^\infty(\Sigma; \Psi^{1}(\Sigma,\mathbb{C}^2))$ be a one-parameter family of pseudodifferential operators of order $1$ acting on the trivial $\mathbb{C}^2$-bundle over $\Sigma$. Suppose $(A_t)_\mathrm{prin}$ has simple eigenvalues $h^{(\pm)}_A(x,\xi;t)$ and let $P^{(\pm)}_A(x,\xi;t)$ be the corresponding eigenprojections.

(a) 
There exist one-parameter families of pseudodifferential projections $$P_\pm(t)\in C^\infty(\mathbb{R};\Psi^0(\Sigma; \mathbb{C}^2))$$ satisfying
the following conditions:
\begin{equation}
\label{pseudodifferential projections equation 1}
(P_\pm(t))_\mathrm{prin}(x,\xi)=P^{(\pm)}_A(x,\xi;t),
\end{equation}
\begin{equation}
\label{pseudodifferential projections equation 2}
(P_\pm(t))^2=P_\pm(t) \mod \Psi^{-\infty},
\end{equation}
\begin{equation}
\label{pseudodifferential projections equation 3}
[A_t,P_\pm(t)]=i\,\partial_tP_\pm(t) \mod \Psi^{-\infty}.
\end{equation}

(b) The pseudodifferential projections $P_\pm(t)$ are uniquely determined, modulo $\Psi^{-\infty}$,  by conditions \eqref{pseudodifferential projections equation 1}--\eqref{pseudodifferential projections equation 3}. 

(c) The pseudodifferential projections $P_\pm(t)$ automatically satisfy
\begin{equation}
\label{pseudodifferential projections equation 4}
P_\pm(t)=(P_\pm(t))^* \mod \Psi^{-\infty},
\end{equation}
\begin{equation}
\label{pseudodifferential projections equation 5}
P_\pm(t)P_\mp(t)=0 \mod \Psi^{-\infty}
\end{equation}
and
\begin{equation}
\label{pseudodifferential projections equation 6}
P_+(t)+P_-(t)=\mathrm{Id} \mod \Psi^{-\infty},
\end{equation}
where $\mathrm{Id}$ is the identity operator on $L^2(\Sigma;\mathbb{C}^2)$.
\end{theorem}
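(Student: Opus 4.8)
The plan is to construct the projections $P_\pm(t)$ via a symbolic (homogeneous-degree) recursion and to establish uniqueness and the auxiliary properties by purely algebraic manipulations at the level of full symbols. The starting point is the observation that, since $(A_t)_\mathrm{prin}$ has \emph{simple} eigenvalues, the zeroth-order eigenprojections $P^{(\pm)}_A(x,\xi;t)$ are smooth in all variables (away from the zero section), and positively homogeneous of degree $0$ in $\xi$. I would therefore seek $P_\pm(t)$ as a polyhomogeneous pseudodifferential operator of order $0$ whose full symbol admits an asymptotic expansion $\sum_{k\ge 0} (P_\pm)_{-k}(t;x,\xi)$ with $(P_\pm)_0 = P^{(\pm)}_A$, and determine the lower-order terms $(P_\pm)_{-k}$ recursively so that \eqref{pseudodifferential projections equation 2} and \eqref{pseudodifferential projections equation 3} hold order by order.

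First I would set up the recursion for the idempotency condition \eqref{pseudodifferential projections equation 2} alone, exactly as in the Riemannian/static case treated in the authors' earlier work: writing $P_\pm(t)^2 - P_\pm(t)$ and expanding the symbol of the composition via the Leibniz formula, the degree $-k$ part yields an equation of the form $P_0 \,(P_\pm)_{-k} + (P_\pm)_{-k}\, P_0 - (P_\pm)_{-k} = (\text{known from }(P_\pm)_{-j},\,j<k)$, which, together with the time-derivative condition \eqref{pseudodifferential projections equation 3}, pins down both the ``diagonal'' part (commuting with $P_0$) and the ``off-diagonal'' part of $(P_\pm)_{-k}$. Concretely, decomposing $(P_\pm)_{-k} = P_0 (P_\pm)_{-k} P_0 + P_0 (P_\pm)_{-k} P_0^\perp + \dots$, the idempotency relation fixes the off-diagonal blocks and the diagonal block's ``scalar trace'' part, while \eqref{pseudodifferential projections equation 3} — whose degree $-k$ component reads $[(A_t)_1,(P_\pm)_{-k}] + \{\text{lower-order Poisson-type terms}\} = i\,\partial_t (P_\pm)_{-(k-1)}$ — fixes the remaining freedom (the component of the diagonal block proportional to $P^{(\pm)}_A$). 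Here simplicity of the eigenvalues is essential: it makes the relevant ``commutator with $(A_t)_1$'' operator invertible on the off-diagonal blocks (division by $h^{(+)}_A - h^{(-)}_A \ne 0$), so the recursion never stalls. This interplay is the main obstacle: one must check that the two families of equations are mutually consistent at every order (i.e. that the idempotency recursion does not over-determine what \eqref{pseudodifferential projections equation 3} is trying to set, and vice versa), and that the resulting symbols are genuinely polyhomogeneous with the right homogeneity. I expect this to follow from differentiating the idempotency identity in $t$ and comparing with \eqref{pseudodifferential projections equation 3}, but verifying it cleanly at each inductive step is the delicate part. Once the full symbol is constructed, Borel summation produces $P_\pm(t)\in C^\infty(\mathbb R;\Psi^0(\Sigma;\mathbb C^2))$ with \eqref{pseudodifferential projections equation 1}--\eqref{pseudodifferential projections equation 3} holding mod $\Psi^{-\infty}$, proving part (a).

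For part (b), uniqueness, I would argue by induction on the order of the symbol: if $P_\pm(t)$ and $P_\pm'(t)$ both satisfy \eqref{pseudodifferential projections equation 1}--\eqref{pseudodifferential projections equation 3}, set $R(t) = P_\pm(t) - P_\pm'(t)$, which has order $\le -1$; subtracting the idempotency relations gives $P_0 R_{-k} + R_{-k} P_0 = R_{-k} \bmod (\text{lower order})$ and subtracting \eqref{pseudodifferential projections equation 3} gives $[(A_t)_1, R_{-k}] = i\,\partial_t R_{-(k-1)} \bmod (\text{lower order})$; feeding in $R_{-j}=0$ for $j<k$ and using the same invertibility/eigenvalue-simplicity argument forces $R_{-k}=0$, so $R(t)\in\Psi^{-\infty}$. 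For part (c): property \eqref{pseudodifferential projections equation 4} follows by noting that $P_\pm(t)^*$ also satisfies \eqref{pseudodifferential projections equation 1} (because $(A_t)_\mathrm{prin}$ is, in the intended application, self-adjoint, so $P^{(\pm)}_A$ is an orthogonal projection and $(P^{(\pm)}_A)^* = P^{(\pm)}_A$), \eqref{pseudodifferential projections equation 2} (adjoint of an idempotent is idempotent), and \eqref{pseudodifferential projections equation 3} (take adjoints, using self-adjointness of $A_t$ and that $\partial_t$ commutes with $*$); then uniqueness from (b) gives $P_\pm(t)^* = P_\pm(t) \bmod \Psi^{-\infty}$. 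Property \eqref{pseudodifferential projections equation 6}: the operator $\mathrm{Id} - P_+(t) - P_-(t)$ has principal symbol $I - P^{(+)}_A - P^{(-)}_A = 0$, hence order $\le -1$; one then checks it satisfies the homogeneous version of the uniqueness recursion (it is annihilated by the same operators at each order, using $[A_t,\mathrm{Id}]=0=\partial_t\mathrm{Id}$ and the relations already established for $P_\pm$), forcing it into $\Psi^{-\infty}$ — alternatively, $Q(t):=\mathrm{Id}-P_-(t)$ satisfies \eqref{pseudodifferential projections equation 1}--\eqref{pseudodifferential projections equation 3} with the $+$ data, so $Q(t)=P_+(t)$ by (b). Finally \eqref{pseudodifferential projections equation 5}: from \eqref{pseudodifferential projections equation 6}, $P_+P_- = P_+(\mathrm{Id}-P_+) = P_+ - P_+^2 = 0 \bmod \Psi^{-\infty}$ by \eqref{pseudodifferential projections equation 2}, and symmetrically for $P_-P_+$.
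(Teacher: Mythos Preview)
Your overall strategy matches the paper's: symbolic recursion for (a), order-by-order contradiction for (b), and algebraic consequences for (c). The one genuine slip is in your description of which blocks are fixed by which condition in part (a). The idempotency equation at order $-k$ reads $P^{(\pm)}_A Q_{-k} + Q_{-k} P^{(\pm)}_A - Q_{-k} = R_{-k}$ with $R_{-k}$ known; decomposing $Q_{-k}$ into $P^{(\pm)}_A(\cdot)P^{(\pm)}_A$, $P^{(\mp)}_A(\cdot)P^{(\mp)}_A$ and off-diagonal blocks, one sees that the \emph{diagonal} blocks are determined (they equal $\pm P^{(\pm)}_A R_{-k} P^{(\pm)}_A$ and $\mp P^{(\mp)}_A R_{-k} P^{(\mp)}_A$) while the \emph{off-diagonal} blocks remain free, subject to a solvability condition on $R_{-k}$. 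It is then the commutator equation \eqref{pseudodifferential projections equation 3} that fixes the off-diagonal freedom via division by $h^{(+)}_A - h^{(-)}_A$. You have the roles reversed. This does not break the argument --- the two conditions together still determine $Q_{-k}$ uniquely --- but you should get the mechanism straight, since the solvability checks (the ``mutual consistency'' you flag as delicate) are organised around exactly this division of labour: one verifies $P^{(\pm)}_A R_{-k} P^{(\mp)}_A = 0$ for idempotency and $P^{(\pm)}_A T_{-k} P^{(\pm)}_A = 0$ for the commutator condition, each using the inductive hypothesis.

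For part (c) your route is slightly different from, and in places cleaner than, the paper's. The paper proves \eqref{pseudodifferential projections equation 4}, \eqref{pseudodifferential projections equation 5}, \eqref{pseudodifferential projections equation 6} by three separate order-by-order contradiction arguments in the style of (b). Your idea of deducing \eqref{pseudodifferential projections equation 6} from uniqueness (observing that $\mathrm{Id} - P_-(t)$ satisfies the defining conditions for $P_+(t)$) and then getting \eqref{pseudodifferential projections equation 5} as $P_+ P_- = P_+(\mathrm{Id} - P_+) = P_+ - P_+^2 \in \Psi^{-\infty}$ is shorter and perfectly valid. Note, however, that your argument for \eqref{pseudodifferential projections equation 4} requires $A_t = A_t^*$ (to conclude that $P_\pm(t)^*$ satisfies \eqref{pseudodifferential projections equation 3}); the theorem as stated does not assume this, though the paper's own proof also tacitly uses it, and it holds in the intended application $A_t = \pm\overline{\Dir}_t$.
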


\begin{remark}
The existence of time-dependent pseudodifferential projections satisfying the above conditions for the special case of the Dirac operator was proved by Gérard and Stoskopf in \cite{gerardDirac} via a different strategy,  by employing methods from linear adiabatic theory.  The approach followed here will (a) lead to an explicit algorithm for the construction of the full symbol of our projections and (b) allow one to understand how each condition defining pseudo\-differential projections affects each homogeneous component of the symbol. 
\end{remark}

\begin{proof}
The proof relies on an adaptation to the case at hand of the strategy devised in \cite{part1,diagonalization}.

(a) In order to prove existence, we will determine the structure of the full symbols of $P_\pm(t)$ explicitly.  We will achieve this by constructing a sequence of pseudodifferential operators $P_{\pm,-k}(t)$, $k=0,1,2,\dots$, satisfying 
\begin{equation}
\label{proof projections equation 1}
P_{\pm,-k-1}(t)-P_{\pm,-k}(t) \in C^\infty(\mathbb{R};\Psi^{-k-1}(\Sigma;\mathbb{C}^2)),
\end{equation}
\begin{equation}
\label{proof projections equation 2}
(P_{\pm,-k}(t))^2=P_{\pm,-k}(t)  \mod C^\infty(\mathbb{R};\Psi^{-k-1}(\Sigma;\mathbb{C}^2)),
\end{equation}
\begin{equation}
\label{proof projections equation 3}
[A_t,P_{\pm,-k}(t)]=i\,\partial_tP_{\pm,-k}(t) \mod C^\infty(\mathbb{R};\Psi^{-k-1}(\Sigma;\mathbb{C}^2)),
\end{equation}
for $k=0,1,2,\dots$. 

Let $P_{\pm,0}(t)=(P_{\pm,0}(t))^*\in C^\infty(\mathbb{R};\Psi^0(\Sigma;\mathbb{C}^2))$ be arbitrary pseudodifferential operators satisfying \eqref{pseudodifferential projections equation 1}.  The latter automatically satisfy \eqref{pseudodifferential projections equation 2} and \eqref{pseudodifferential projections equation 3} modulo $ C^\infty(\mathbb{R};\Psi^{-1}(\Sigma;\mathbb{C}^2))$. Let us construct $P_{\pm,-k}(t)$, $k=1,2,\dots$, by solving \eqref{proof projections equation 1}--\eqref{proof projections equation 3} recursively.

To this end,  suppose we have determined $P_{\pm,1-k}(t)$ and put 
\begin{equation}
\label{proof projections equation 4}
P_{\pm,-k}(t):=P_{\pm,1-k}(t)+Q_{\pm,-k}(t), \qquad k=1,2,\dots,
\end{equation}
where $Q_{\pm,-k}(t)=(Q_{\pm,-k}(t))^*\in C^\infty(\mathbb{R};\Psi^{-k}(\Sigma;\mathbb{C}^2))$ is an unknown pseudodifferential operator to be determined.

Formula \eqref{proof projections equation 4} implies that condition \eqref{proof projections equation 1} is automatically satisfied, whereas satisfying \eqref{proof projections equation 2} and \eqref{proof projections equation 3} reduces to solving
\begin{equation}
\label{proof projections equation 5}
\bigl((P_{\pm,1-k}(t)+Q_{\pm,k}(t))^2-P_{\pm,1-k}(t)-Q_{\pm,-k}(t)\bigr)_\mathrm{prin,-k}=0,
\end{equation}
\begin{equation}
\label{proof projections equation 6}
\bigl([A_t,P_{\pm,1-k}(t)+Q_{\pm,-k}(t)]-i\,\partial_t(P_{\pm,1-k}(t)+Q_{\pm,-k}(t))\bigr)_\mathrm{prin,1-k}=0,
\end{equation}
respectively.  Note that \eqref{proof projections equation 5} and \eqref{proof projections equation 6} give us a system of equations for the smooth matrix-function $(Q_{\pm,-k}(t))_\mathrm{prin}$. Also note that the LHS of \eqref{proof projections equation 6} is a pseudodifferential operator of order $1-k$ --- one order higher than the LHS of \eqref{proof projections equation 5} --- because $A_t$ is an operator of order $1$.

More explicitly, the system \eqref{proof projections equation 5}--\eqref{proof projections equation 6} reads (further on we drop the dependence on $t$, when this does not generate confusion,  for the sake of readability)
\begin{equation}
\label{proof projections equation 7}
P^{(\pm)}_A(Q_{\pm,-k})_\mathrm{prin}+(Q_{\pm,-k})_\mathrm{prin}P^{(\pm)}_A-(Q_{\pm,-k})_\mathrm{prin}=R_{\pm,-k},
\end{equation}
\begin{equation}
\label{proof projections equation 8}
[(A_t)_\mathrm{prin},(Q_{\pm,-k})_\mathrm{prin}]=M_{\pm,-k},
\end{equation}
where
\begin{equation*}
\label{proof projections equation 9}
R_{\pm,-k}:=-((P_{\pm,1-k})^2-P_{\pm,1-k})_{\mathrm{prin},-k},
\end{equation*}
\begin{equation*}
\label{proof projections equation 10}
M_{\pm,-k}:=([P_{\pm,1-k},A_t]+i\,\partial_tP_{\pm,1-k})_{\mathrm{prin},1-k}.
\end{equation*}

Direct inspection tells us that \eqref{proof projections equation 7} has a solution only if 
\begin{equation}
\label{proof projections equation 11}
P^{(+)}_AR_{\pm,-k}P^{(-)}_A=0.
\end{equation}
The solvability condition \eqref{proof projections equation 11} is satisfied. Indeed, for the upper choice of sign we have
\begin{multline*}
\label{proof projections equation 12}
P^{(+)}_AR_{+,-k}P^{(-)}_A
\\
=
- (P_{+,1-k})_{\mathrm{prin},0} ((P_{+,-k+1})^2-P_{+,1-k})_{\mathrm{prin},-k}(\mathrm{Id}-P_{+,1-k})_{\mathrm{prin},0}
\\
=
-[P_{+,1-k}((P_{+,1-k})^2-P_{+,1-k})(\mathrm{Id}-P_{+,1-k})]_{\mathrm{prin},-k}
\\
=
[((P_{+,1-k})^2-P_{+,1-k})^2]_{\mathrm{prin},-k}
=
0,
\end{multline*}
where in the last step we used the inductive assumption.  A similar argument yields \eqref{proof projections equation 11} for the lower choice of sign.

Then the general solution of \eqref{proof projections equation 7} reads
\begin{equation}
\label{proof projections equation 13}
(Q_{\pm,-k})_\mathrm{prin}=-R_{\pm,-k}+P^{(\pm)}_AR_{\pm,-k}+R_{\pm,-k}P^{(\pm)}_A+ W_{\pm,-k}+(W_{\pm,-k})^*,
\end{equation}
where $W_{\pm,-k}$ is an arbitrary smooth matrix function satisfying
\begin{equation}
\label{proof projections equation 14}
W_{\pm,-k}=P^{(\pm)}_AW_{\pm,-k}P^{(\mp)}_A,
\end{equation}
as one can easily establish by substituting \eqref{proof projections equation 13} into \eqref{proof projections equation 7}, and using \eqref{proof projections equation 11} and the fact that $R_{\pm,-k}$ is Hermitian.

Let
\begin{equation*}
\label{proof projections equation 15}
Q_{\pm,-k}:=-R_{\pm,-k}+P^{(\pm)}_AR_{\pm,-k}+R_{\pm,-k}P^{(\pm)}_A.
\end{equation*}
Then \eqref{proof projections equation 8} can be equivalently rewritten as
\begin{equation}
\label{proof projections equation 16}
(h^{(\pm)}_A-h^{(\mp)}_A)(W_{\pm,-k}-(W_{\pm,-k})^*)=T_{\pm,-k},
\end{equation}
where 
\begin{equation}
\label{proof projections equation 17}
T_{\pm,-k}:=M_{\pm,-k}+[Q_{\pm,-k},(A_t)_\mathrm{prin}].
\end{equation}

The identity \eqref{proof projections equation 14} implies that \eqref{proof projections equation 16} admits a solution only if
\begin{equation}
\label{proof projections equation 18}
P^{(\pm)}_AT_{\pm,-k}P^{(\pm)}_A=0
\end{equation}
and
\begin{equation*}
\label{proof projections equation 19}
(T_{\pm,-k})^*=-T_{\pm,-k}.
\end{equation*}
It is easy to see that the latter is satisfied.
Let us check that the solvability condition \eqref{proof projections equation 18} is also satisfied. 

Let $\widetilde{Q}_{\pm,-k}=(\widetilde{Q}_{\pm,-k})^*\in C^\infty(\mathbb{R};\Psi^{-k}(\Sigma);\mathbb{C}^2)$ be an arbitrary pseudodifferential operator such that $(\widetilde{Q}_{\pm,-k})_\mathrm{prin}=Q_{\pm,-k}$. Then we have
\begin{equation}
\label{proof projections equation 20}
(P_{\pm,1-k}+\widetilde{Q}_{\pm,-k})^2=P_{\pm,1-k}+\widetilde{Q}_{\pm,-k}  \mod C^\infty(\mathbb{R};\Psi^{-k-1}(\Sigma;\mathbb{C}^2))
\end{equation}
and we can recast \eqref{proof projections equation 17} as
\begin{equation}
\label{proof projections equation 21}
T_{\pm,-k}=([P_{\pm,1-k}+\widetilde{Q}_{\pm,-k},A_t]+i\,\partial_tP_{\pm,1-k})_{\mathrm{prin},1-k}.
\end{equation}
Hence,  \eqref{proof projections equation 21} and \eqref{proof projections equation 20} imply
\begin{multline*}
\label{proof projections equation 22}
P^{(\pm)}_AT_{\pm,-k}P^{(\pm)}_A
\\
=
(P_{\pm,1-k}+\widetilde{Q}_{\pm,-k})_{\mathrm{prin},0}([P_{\pm,1-k}+\widetilde{Q}_{\pm,-k},A_t])_{\mathrm{prin},-k+1}(P_{\pm,1-k}+\widetilde{Q}_{\pm,-k})_{\mathrm{prin},0}
\\
+
(P_{\pm,1-k})_{\mathrm{prin},0}(i\,\partial_tP_{\pm,1-k})_{\mathrm{prin},1-k}(P_{\pm,1-k})_{\mathrm{prin},0}
\\
=
i(P_{\pm,1-k}(\partial_tP_{\pm,1-k})P_{\pm,1-k})_{\mathrm{prin},1-k}
\\
=
i\bigl(P_{\pm,1-k}\partial_t(P_{\pm,1-k}P_{\pm,1-k})-P_{\pm,1-k}P_{\pm,1-k}\partial_t(P_{\pm,1-k})\bigr)_{\mathrm{prin},1-k}
\\
=
0.
\end{multline*}

The unique solution to \eqref{proof projections equation 16} is then
\begin{equation}
\label{proof projections equation 18bis}
W_{\pm,-k}=\dfrac{P^{(\pm)}_AT_{\pm,-k}P^{(\pm)}_A}{(h^{(\pm)}_A-h^{(\mp)}_A)},
\end{equation}
where uniqueness follows from the fact that the homogeneous equation
\begin{equation*}
(h^{(\pm)}_A-h^{(\mp)}_A)(W_{\pm,-k}-(W_{\pm,-k})^*)=0
\end{equation*}
complemented by condition \eqref{proof projections equation 14} admits the trivial solution only.

All in all, we have proved that 
\begin{equation*}
\label{proof projections equation 19bis}
P_\pm(t)\sim P_{\pm,0}+\sum_{k=1}^{+\infty} Q_{\pm,-k}(t),
\end{equation*}
with $Q_{\pm,-k}(t)$ constructed iteratively in accordance with \eqref{proof projections equation 13}, \eqref{proof projections equation 18bis},
satisfy \eqref{pseudodifferential projections equation 1}--\eqref{pseudodifferential projections equation 3},
thus establishing existence.  Smoothness with respect to the parameter $t$ of the symbol follows in a straightforward manner from the smoothness in $t$ of $P^{(\pm)}_A$ and the above construction.

(b) Suppose we have two sets of projections $P_\pm(t)$ and $\widetilde P_\pm(t)$ satisfying \eqref{pseudodifferential projections equation 1}--\eqref{pseudodifferential projections equation 3} and define
\begin{equation*}
\label{proof projections equation 20bis}
L_\pm(t):=P_\pm(t)-\widetilde P_\pm(t).
\end{equation*}
Arguing by contradiction, suppose there exists a positive integer $k$ such that
\begin{equation}
\label{proof projections equation 21bis}
L_\pm(t) \in C^\infty(\mathbb{R};\Psi^{-k}(\Sigma;\mathbb{C}^2))\qquad \text{but}\qquad L_\pm(t) \notin C^\infty(\mathbb{R};\Psi^{-k-1}(\Sigma;\mathbb{C}^2)).
\end{equation}
Formula \eqref{proof projections equation 21bis} implies that
\begin{equation}
\label{proof projections equation 22bis}
(L_\pm(t))_{\mathrm{prin},-k}\ne 0.
\end{equation}
The fact that $P_\pm(t)=L_\pm(t)+\widetilde P_\pm(t)$ satisfies \eqref{pseudodifferential projections equation 1}--\eqref{pseudodifferential projections equation 3} translates into the following set of equations:
\begin{subequations}
\begin{equation}
\label{proof projections equation 23}
P^{(\pm)}_A(L_\pm)_\mathrm{prin}+(L_\pm)_\mathrm{prin}P^{(\pm)}_A-(L_\pm)_\mathrm{prin}=0,
\end{equation}
\begin{equation}
\label{proof projections equation 24}
(A_t)_\mathrm{prin} (L_\pm)_{\mathrm{prin}}-(L_\pm)_{\mathrm{prin}}(A_t)_\mathrm{prin}=0.
\end{equation}
\end{subequations}
Equation \eqref{proof projections equation 23} implies that
\begin{equation}
\label{proof projections equation 25}
(L_\pm)_\mathrm{prin}=P^{(+)}_A(L_\pm)_\mathrm{prin}P^{(-)}_A+P^{(-)}_A(L_\pm)_\mathrm{prin}P^{(+)}_A.
\end{equation}
Substituting \eqref{proof projections equation 25} into \eqref{proof projections equation 24} we get
\begin{equation*}
\label{proof projections equation 26}
(h^{(+)}_A-h^{(-)}_A)(P^{(+)}_A(L_\pm)_\mathrm{prin}P^{(-)}_A-P^{(-)}_A(L_\pm)_\mathrm{prin}P^{(+)}_A)=0.
\end{equation*}
But the latter only admits the trivial solution $(L_\pm)_\mathrm{prin}=0$, which contradicts \eqref{proof projections equation 22bis}.

(c)
Lastly, let us show that \eqref{pseudodifferential projections equation 1}--\eqref{pseudodifferential projections equation 3} imply \eqref{pseudodifferential projections equation 4}--\eqref{pseudodifferential projections equation 6}.

Put
\begin{equation*}
\label{proof projections equation 27}
B_\pm(t)=P_\pm(t)-(P_\pm(t))^*
\end{equation*}
and suppose 
\begin{equation}
\label{proof projections equation 28}
B_\pm(t) \in C^\infty(\mathbb{R};\Psi^{-k}(\Sigma;\mathbb{C}^2))\qquad \text{but}\qquad B_\pm(t) \notin C^\infty(\mathbb{R};\Psi^{-k-1}(\Sigma;\mathbb{C}^2))
\end{equation}
for some $k\in \mathbb{N}$.  Then, arguing as in part (b),  one obtains that $(B_\pm(t))_\mathrm{prin}=0$, because it satisfies \eqref{proof projections equation 23} and \eqref{proof projections equation 24},  thus contradicting \eqref{proof projections equation 28}.
Therefore $B_\pm\in C^\infty(\mathbb{R};\Psi^{-\infty}(\Sigma;\mathbb{C}^2))$, which gives us \eqref{pseudodifferential projections equation 4}.

Next, define
\begin{equation*}
\label{proof projections equation 29}
C_\pm(t):=P_\pm(t)P_\mp(t)
\end{equation*}
and suppose there exists an integer $k$ such that
\begin{equation}
\label{proof projections equation 30}
C_\pm(t) \in C^\infty(\mathbb{R};\Psi^{-k}(\Sigma;\mathbb{C}^2))\qquad \text{but}\qquad C_\pm(t) \notin C^\infty(\mathbb{R};\Psi^{-k-1}(\Sigma;\mathbb{C}^2)).
\end{equation}
Properties \eqref{pseudodifferential projections equation 1}--\eqref{pseudodifferential projections equation 3} imply
\begin{subequations}
\begin{equation}
\label{proof projections equation 31}
P^{(\pm)}_A(C_\pm)_{\mathrm{prin},-k}-(C_\pm)_{\mathrm{prin},-k}=0,
\end{equation}
\begin{equation}
\label{proof projections equation 32}
(C_\pm)_{\mathrm{prin},-k}P^{(\mp)}_A-(C_\pm)_{\mathrm{prin},-k}=0
\end{equation}
and
\begin{equation}
\label{proof projections equation 33}
[A_t, P_\pm P_\mp]=i\,\partial_t (P_\pm P_\mp) \mod C^\infty(\mathbb{R};\Psi^{-\infty}(\Sigma;\mathbb{C}^2)),
\end{equation}
which yields
\begin{equation}
\label{proof projections equation 34}
[(A_t)_\mathrm{prin}, (C_\pm)_{\mathrm{prin},-k}]=0.
\end{equation}
\end{subequations}
The system of equations \eqref{proof projections equation 31}, \eqref{proof projections equation 32} and \eqref{proof projections equation 34} admits only the trivial solution $(C_\pm)_{\mathrm{prin},-k}=0$, thus contradicting \eqref{proof projections equation 30}. Therefore $C_\pm\in C^\infty(\mathbb{R};\Psi^{-\infty}(\Sigma;\mathbb{C}^2))$, which gives us \eqref{pseudodifferential projections equation 5}.

Finally,  put
\begin{equation*}
\label{proof projections equation 35}
F(t):=\mathrm{Id}-P_+(t)-P_-(t)
\end{equation*}
and, arguing by contradiction, suppose there exists an integer $k$ such that
\begin{equation}
\label{proof projections equation 30bis}
F(t) \in C^\infty(\mathbb{R};\Psi^{-k}(\Sigma;\mathbb{C}^2))\qquad \text{but}\qquad J_\pm(t) \notin C^\infty(\mathbb{R};\Psi^{-k-1}(\Sigma;\mathbb{C}^2)).
\end{equation}

Properties \eqref{pseudodifferential projections equation 2} and \eqref{pseudodifferential projections equation 5} imply
\begin{equation*}
\label{proof projections equation 31bis}
(P_++P_-)F\in C^\infty(\mathbb{R};\Psi^{-\infty}(\Sigma;\mathbb{C}^2)),
\end{equation*}
so that
\begin{equation}
\label{proof projections equation 32bis}
((P_++P_-)F)_{\mathrm{prin},-k}=0.
\end{equation}
But
\begin{equation}
\label{proof projections equation 33bis}
((P_++P_-)F)_{\mathrm{prin},-k}=(P_++P_-)_\mathrm{prin}\, F_{\mathrm{prin}, -k}=F_{\mathrm{prin}, -k}.
\end{equation}
Formulae \eqref{proof projections equation 32bis} and \eqref{proof projections equation 33bis} imply $F_{\mathrm{prin}, -k}=0$, which contradicts \eqref{proof projections equation 30bis}.  Therefore $F_\pm\in C^\infty(\mathbb{R};\Psi^{-\infty}(\Sigma;\mathbb{C}^2))$, which gives us \eqref{pseudodifferential projections equation 6}. This concludes the proof.
\end{proof}

Clearly,  one can always choose $P_\pm(t)$ so that condition \eqref{pseudodifferential projections equation 4} is satisfied exactly, not only modulo a smoothing operator, by suitably adjusting the smoothing error. In what follows,  without loss of generality, we will take $P_\pm(t)$ to be self-adjoint.

\begin{remark}
\label{remark spectral projections}
Note that, in general,  the pseudodifferential projections from Theorem~\ref{theorem pseudodifferential projections} for the choice $A_t=\overline \Dir_t$ do \emph{not} coincide with the spectral projections $\theta(\pm \overline{\Dir}_t)$, not even modulo $\Psi^{-\infty}$.  Here $\theta$ is the Heaviside step function. Indeed, it follows from \cite[Theorem~2.7]{part1} that the latter are pseudodifferential operators smoothly depending on $t$ uniquely determined by 
\eqref{pseudodifferential projections equation 1}, \eqref{pseudodifferential projections equation 2} and
\begin{equation}
\label{remark spectral projections equation 1}
[\overline{\Dir}_t, P_{\pm}(t)]=0 \mod \Psi^{-\infty}.
\end{equation}
Compare \eqref{remark spectral projections equation 1} with \eqref{pseudodifferential projections equation 3}.
The two, of course, coincide in the ultrastatic setting, \ie  $g=-dt^2+h$, where the (reduced) Dirac operator and pseudodifferential projections are time-independent.
\end{remark}

\begin{remark}
Observe that Theorem~\ref{theorem pseudodifferential projections} can be effortlessly generalised to families of operators of arbitrary order acting on the trivial $\mathbb{C}^m$-bundle, $m\ge 2$,  over $\Sigma$, so long as their principal symbols have simple eigenvalues.  We formulate the result for operators of order $1$ acting on 2-columns of scalar functions for the sake of clarity, so that we can more easily specialise the statement to the operators $\pm \overline \Dir_t$.
\end{remark}

\subsubsection{Algorithmic construction}
\label{Algorithmic construction}

The arguments from the proof of Theorem~\ref{theorem pseudodifferential projections} can be summarised in the form of a concise algorithm for the construction of the full symbol of the pseudodifferential projections $P_\pm(t)$, given below.

\begin{enumerate}
\item 
Choose arbitrary pseudodifferential operators $P_{\pm,0}(t)$ of order zero satisfying
\[
[P_{\pm,0}(t)]_\mathrm{prin}=P^\pm_A(y,\eta;t).
\]

\item
Assuming we know $P_{\pm,-k+1}(t)$, compute the following quantities:
\begin{enumerate}[(i)]
\item 
$R_{\pm,-k}(t):=-((P_{\pm,-k+1}(t))^2-P_{\pm,-k+1}(t))_{\mathrm{prin},-k}$,

\item
$S_{\pm,-k}(t):=-R_{\pm,-k}(t)+P^\pm_A R_{\pm,-k}(t)+R_{\pm,-k}(t)P^\pm_A $,

\item
$$T_{\pm,-k}(t):=([P_{\pm,-k+1}(t),A_t]+i\partial_t P_{\pm,-k+1}(t))_{\mathrm{prin},-k+1}+[S^{(\pm)}_k(t),(A_t)_\mathrm{prin}].$$
\end{enumerate}
Note that $R_{\pm,-k}(t)$,  $S_{\pm,-k}(t)$ and $T_{\pm,-k}(t)$ are smooth functions of $t$ valued in $2\times 2$ smooth matrix-functions on $\T^*\Sigma\setminus \{0\}$,  positively  homogeneous in momentum of degree $-k$, $-k$ and $-k+1$, respectively.

\item 
Choose pseudodifferential operators $Q_{\pm,k}(t)$ of order $-k$ such that
\begin{equation*}
(Q_{\pm,-k}(t))_\mathrm{prin}=S_{\pm,-k}(t)
+\frac{P^{(+)}_AT_{\pm,-k}(t)P^{(-)}_A-P^{(-)}_AT_{\pm,-k}(t) P^{(+)}_A}{h^{(+)}_A-h^{(-)}_A}
\end{equation*}
and set $P_{\pm,-k}(t):=P_{\pm,-k+1}(t)+Q_{\pm,k}(t)$. 

\item
Then
\[
P_\pm(t)\sim P_{\pm,0}(t)+\sum_{j=1}^{+\infty} Q_{\pm,-k}(t)\,.
\]
\end{enumerate}

\begin{remark}
Note that the above algorithm is independent of the choice of a particular quantisation for our pseudodifferential operators.
\end{remark}

\subsection{Construction of the propagators}
\label{Construction of the propagators}

Our strategy is to construct 
 the  integral (Schwartz) kernel of the positive and negative propagators $U_L^{(\pm)}$
in the form of two oscillatory integrals 
\begin{multline}
\label{oscillatory integral algorithm 1}
\mathcal{I}_{\varphi^\pm}(\mathfrak{a}):=\frac{1}{(2\pi)^{3}}\int_{\T'_y\Sigma} e^{i\varphi^\pm(t,x;s,y,\eta)}\mathfrak{a}^\pm(t;s,y,\eta)
\\
\times \chi^\pm(t,x;s,y,\eta)\,w^\pm(t,x;s,y,\eta)\,d\eta 
\end{multline}
where
\begin{itemize}
\item
the quantities $\mathfrak{a}^\pm \in C^\infty(\mathbb{R};S^0_\mathrm{ph}(\T'\Sigma_s;\mathrm{Mat}(2,\mathbb{C})))$ are polyhomogeneous matrix-valued symbols of order zero on $\T'\Sigma_s$ which depend smoothly on $t$ and $s$ to be determined, 
\begin{equation*}
\label{definition mathfrak a}
\mathfrak{a}^\pm \sim \sum_{k=0}^{+\infty} \mathfrak{a}^\pm_{-k}, \qquad \mathfrak{a}_{-k}(t;s,y,\lambda \eta)=\lambda^{-k} \, \mathfrak{a}_{-k}(t;s,y, \eta) \quad \forall\lambda>0,
\end{equation*}
where $\sim$ stands for asymptotic expansion, see \cite[Section~3.3]{shubin};

\item the functions $\varphi^\pm$ are distinguished geometric complex-valued phase functions defined in Subsection~\ref{The Levi-Civita phase functions};

\item
the functions $\chi^\pm\in C^\infty(\M \times \T'\Sigma_s)$ are infinitely smooth cut-offs satisfying
\begin{enumerate}[(i)]
\item
$\chi^\pm(t,x;s,y,\eta)=0$ on $\{(t,x;s,y,\eta)\ |\ \|\eta\|_{h_s}\le 1/2\},$

\item
$\chi^\pm(t,x;s,y,\eta)=1$ on the intersection of $\{(t,x;s,y,\eta)\ |\ \|\eta\|_{h_s}\ge 1\}$ and a conical neighbourhood of $\{(t,x^\pm(t;s,y,\eta);s,y,\eta)\}$,

\item
$\chi^\pm(t,x;s,y,\lambda \eta)=\chi^\pm(t,x;s,y, \eta)$ for all $\lambda\ge 1$ on $$\{(t,x;s,y,\eta)\ |\ \|\eta\|_{h_s}\ge 1\};$$
\end{enumerate}

\item
the weights $w^\pm$ are defined in accordance with
\begin{equation}
\label{definition weight w pm}
w^\pm(t,x;s,y,\eta):=[\rho_{h_s}(y)\rho_{h_t}(x)]^{-1/2}[\left(\det \varphi^\pm_{x^\mu\eta_\nu} \right)^2]^{1/4},
\end{equation}
where $\rho_{h_s}(y):=\sqrt{\det (h_s)_{\alpha\beta}(y)}$ and the branch of the complex root is chosen in such a way that $w^\pm(s,y;s,y,\eta)=[\rho_{h_s}(y)]^{-1}$.
\end{itemize}

\begin{remark}
Observe that the quantity 
\[
[\left(\det \varphi_{x^\mu\eta_\nu} \right)^2]^{1/4}
\]
is a $1/2$-density in $x$ and a $(-1/2)$-density in $y$, as one can easily establish by straightforward calculations. The powers of the Riemannian density in \eqref{definition weight w pm} are chosen in such a way that 
\eqref{definition weight w pm} is a scalar function in $x$ and a $(-1)$-density in $y$, 
so that
\eqref{oscillatory integral algorithm 1} is an scalar function in both $x$ and $y$.
\end{remark}

\subsubsection{The Levi-Civita phase functions}
\label{The Levi-Civita phase functions}

The first step consists in introducing geometric complex-valued phase functions for \eqref{oscillatory integral algorithm 1}.  Indeed, the adoption of distinguished phase functions will allow us to uniquely determine $\mathfrak{a}^\pm$ as invariantly defined scalar matrix-functions.

\medskip

Let $\iota_t: \Sigma_t \to \M$ be the natural embedding of $\Sigma_t$ into $\M$, so that $\hat h_t=\iota^*_t \hat g$.  Here $\iota^*_t$ denotes the pullback along $\iota_t$. 

Let $Y=(s,y)\in \RR \times \Sigma$ be a point of $\M$. For $\eta\in \T_y^*\Sigma \setminus\{0\}$ we denote by $\eta_+$ the unique future pointing null covector in $\T^*_Y\M$ such that 
$
\iota^*_s \eta_+:=\eta
$.
Similarly, for each given $\eta\in \T_y^*\Sigma \setminus\{0\}$ we denote by $\eta_-$ the unique past pointing null covector in $\T^*_Y\M$ such that 
$
\iota^*_s \eta_+:=\eta
$.
Let us put
\begin{equation*}
\hat{\eta}_\pm:=\dfrac{\eta_\pm}{\|\eta\|_{h_s}},
\end{equation*}
where $\|\eta\|_{h_s}:=\sqrt{h^{\alpha\beta}_s(y)\eta_\alpha\eta_\beta}$. See also Figure~\ref{fig:conoluce}.
\begin{figure}[h!]
\centering
\includegraphics[scale=0.32]{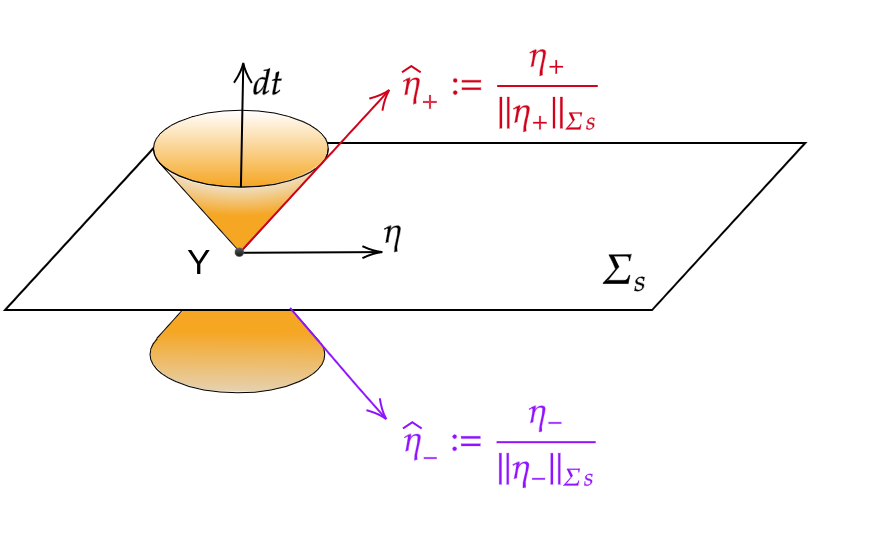}
\caption{Future and past pointing null covectors $\hat \eta_{\pm}$.}
\label{fig:conoluce}
\end{figure}

\begin{definition}[Levi-Civita flow]
\label{definition levi civita flow}
We define the \emph{positive (resp.~negative) Levi-Civita flow} with initial condition $Y=(s,y)\in \M$ to be the map
\begin{equation}
\label{levi civita flow map}
\tau \mapsto (\tilde X^\pm(\tau; s,y,\eta), \tilde \Xi^\pm(\tau;s,y,\eta)),
\end{equation}
where 
\begin{enumerate}[(i)]
\item
$\tau \mapsto \tilde X^\pm(\tau;s,y,\eta)$ is the unique geodesic parameterised by proper time originating at $Y$ with initial cotangent vector $\hat{(\pm \eta)}_\pm$ (see also Figure~\ref{fig:Flow});

\item
$\tilde \Xi^\pm(\tau;s,y,\eta)$ is the parallel transport of $\eta_\pm$ from $Y$ to $\tilde X^\pm(\tau;s,y,\eta)$ along $\tilde X^\pm(\,\cdot\,;s,y,\eta)$.
\end{enumerate}
\end{definition}

By construction, $\tilde X^\pm$ is positively homogeneous in $\eta$ of degree 0 whereas $\tilde \Xi^\pm$ is positively homogeneous in $\eta$ of degree 1, 
\begin{equation}
\label{positive homoegeity Levi Civita flow}
(\tilde X^\pm(\tau;s,y,\lambda\eta),\tilde \Xi^\pm(\tau;s,y,\lambda\eta))=(\tilde X^\pm(\tau;s,y,\eta),\lambda\,\tilde \Xi^\pm(\tau;s,y,\eta)) \quad \forall \lambda>0.
\end{equation}

\begin{figure}[h!]
\centering
\includegraphics[scale=0.28]{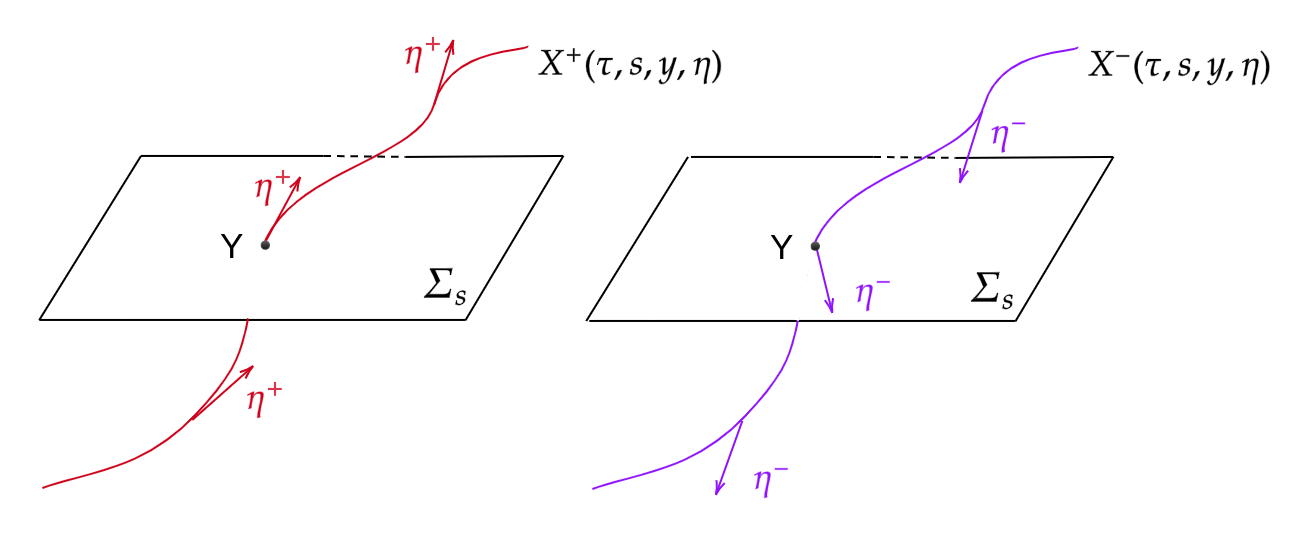}
\caption{Levi Civita flows.}
\label{fig:Flow}
\end{figure}

For practical purposes, it is more convenient to reparameterise the Levi-Civita flow using the global time coordinate $t$ given by Theorem~\ref{thm: Sanchez}, which is always possible. Indeed, if one defines
\[
t^\pm(\tau;s,y,\eta):=t(\tilde X^\pm(\tau;s,y,\eta)),
\]
it is not hard to see that $\eta\in \T_y\Sigma \setminus \{0\}$ implies $\frac{dt^\pm}{d\tau}\ne 0$. We denote by
\[
t\mapsto (X^\pm(t;s,y,\eta),\Xi^\pm(t;s,y,\eta))
\]
the Levi-Civita flow reparameterised by $t$.  Of course, the new parameterisation does not affect property \eqref{positive homoegeity Levi Civita flow}. Furthermore, we have
\begin{equation*}
\label{initial condition Levi Civita flow}
(X^\pm(s;s,y,\eta), \Xi^\pm(s;s,y,\eta))=(Y,\eta_\pm).
\end{equation*}
By means of the Levi-Civita flow, we can define two distinguished phase functions.

\begin{definition}
\label{definition levi civita phase functions}
We call positive ($+$) and negative ($-$) Lorentzian Levi-Civita phase functions the infinitely smooth complex-valued functions
\[
\varphi^\pm: \M \times \RR \times \T'\Sigma \to \mathbb{C}
\]
defined in accordance with 
\begin{multline}
\label{lonrentzian levi civita pm equation}
\varphi^\pm(t,x;s,y,\eta):=-\langle \Xi^\pm(t;s,y,\eta), \left.\operatorname{grad}_Z\sigma(X,Z)\right|_{Z=X^\pm(t;s,y,\eta)} \rangle
\\
+ i\,\|\eta\|_{h_s}\sigma(X,X^\pm(t;s,y,\eta))
\end{multline}
for $X=(t,x)$ in a normal geodesic neighbourhood of $X^\pm(t;s,y,\eta)$ and smoothly continued elsewhere in such a way that $\operatorname{Im} \varphi^\pm \ge 0$ and $\partial_\eta \varphi^\pm\ne0$.  Here $\operatorname{grad}$ stands for gradient and $\sigma$ is the Ruse-Synge world function \cite[Sec.~3]{PPV}.
\end{definition}

The positive and negative Levi-Civita phase functions encode information about propagation of singularities for the reduced Dirac equation \eqref{eq:dirac reduced}.

\begin{proposition}
\label{proposition stationary points of the Levi-Civita phase functions}
Let us denote the critical sets of the Levi-Civita phase functions $\varphi^\pm$ by
\begin{equation*}
\label{critical set phi pm}
\mathcal{C}_{\varphi^\pm}:=\{(t,x;s,y,\eta)\in \M \times\mathbb{R}\times \T^*\Sigma \setminus\{0\}\ |\ \partial_\eta \varphi^\pm(t,x;s,y,\eta)=0\}.
\end{equation*}
Then,  $\mathcal{C}_{\varphi^\pm}$ coincide with the submanifolds $\Phi^\pm\subset \M \times\mathbb{R}\times \T^*\Sigma$ generated by the Levi-Civita flow, namely
\begin{equation*}
\mathcal{C}_{\varphi^\pm}=\Phi^\pm:=\{(t,x^\pm(t;s,y,\eta);s,y,\eta)\ |\ t,s\in \mathbb{R},\ (y,\eta)\in \T^*\Sigma \setminus\{0\}\}.
\end{equation*}
\end{proposition}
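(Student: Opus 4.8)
The plan is to establish the two set inclusions $\Phi^\pm\subseteq\mathcal{C}_{\varphi^\pm}$ and $\mathcal{C}_{\varphi^\pm}\subseteq\Phi^\pm$ directly, by differentiating the phase \eqref{lonrentzian levi civita pm equation} in $\eta$ and invoking the standard coincidence limits of the Ruse--Synge world function (see~\cite{PPV}): $\sigma(X,X)=0$, $\operatorname{grad}_Z\sigma(X,Z)|_{Z=X}=0$, and the coincidence limit of the second $Z$-derivative of $\sigma$ equals the metric $g$ at that point. It is convenient to recall that, since the Levi-Civita flow is reparameterised by the Cauchy temporal function $t$, the points $X=(t,x)$ and $X^\pm(t;s,y,\eta)=(t,x^\pm(t;s,y,\eta))$ both lie on the same spacelike leaf $\Sigma_t$.

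First I would prove $\Phi^\pm\subseteq\mathcal{C}_{\varphi^\pm}$. Differentiating \eqref{lonrentzian levi civita pm equation} with respect to $\eta_j$ and setting $X=X^\pm(t;s,y,\eta)$, the first two coincidence identities annihilate every term in which $\operatorname{grad}_Z\sigma$ or $\sigma$ occurs undifferentiated in $Z$; the only surviving contribution comes from differentiating $\operatorname{grad}_Z\sigma(X,Z)|_{Z=X^\pm(t;s,y,\eta)}$ with respect to the $\eta$-dependence of the evaluation point $X^\pm$, and there the chain rule brings in the $Z$-Hessian of $\sigma$, whose coincidence limit is $g$, so that the pairing with $\Xi^\pm$ reduces, on $\Phi^\pm$, to
\[
\partial_{\eta_j}\varphi^\pm=-g\!\left((\Xi^\pm)^\sharp,\,\partial_{\eta_j}X^\pm\right).
\]
To see that this vanishes I would use that, by construction, $(\Xi^\pm)^\sharp$ is proportional (with a factor constant along the geodesic) to the velocity of the underlying null geodesic $\tilde X^\pm$, while $\partial_{\eta_j}X^\pm$, once one passes from the affine parameter back to $t$, is the sum of a multiple of that velocity and of the Jacobi field $J_j=\partial_{\eta_j}\tilde X^\pm$ along the geodesic. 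Nullity of the geodesic kills the first piece; for the second, $g(\dot{\tilde X}^\pm,J_j)$ is affine in the geodesic parameter by the symmetries of the curvature tensor, it vanishes at the initial point (there $J_j=0$, since $Y=(s,y)$ does not depend on $\eta$), and its derivative there equals $\tfrac12\partial_{\eta_j}$ of the $g$-norm squared of the initial geodesic velocity $(\hat{(\pm\eta)}_\pm)^\sharp$, which is identically zero because that velocity is null for every $\eta$. Hence $g(\dot{\tilde X}^\pm,J_j)\equiv0$ and $\partial_{\eta_j}\varphi^\pm=0$ on $\Phi^\pm$.

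For the reverse inclusion I would use the homogeneity of the imaginary part. Since $\Xi^\pm$ and $\operatorname{grad}_Z\sigma$ are real, \eqref{lonrentzian levi civita pm equation} gives, wherever the explicit formula is used, $\operatorname{Im}\varphi^\pm=\|\eta\|_{h_s}\,\sigma\!\left(X,X^\pm(t;s,y,\eta)\right)$; by \eqref{positive homoegeity Levi Civita flow} the map $\eta\mapsto X^\pm(t;s,y,\eta)$ is positively homogeneous of degree $0$, so $\operatorname{Im}\varphi^\pm$ is positively homogeneous of degree $1$ in $\eta$. By Definition~\ref{definition levi civita phase functions} the continuation of $\varphi^\pm$ away from the normal neighbourhood of $X^\pm$ is chosen with $\partial_\eta\varphi^\pm\neq0$, so every critical point lies where the explicit formula applies; at such a point $\partial_{\eta_j}\operatorname{Im}\varphi^\pm=0$ for all $j$, whence Euler's identity forces $\operatorname{Im}\varphi^\pm=\sum_j\eta_j\,\partial_{\eta_j}\operatorname{Im}\varphi^\pm=0$, that is $\sigma(X,X^\pm)=0$ since $\eta\neq0$ yields $\|\eta\|_{h_s}>0$. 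As $X$ lies in a normal neighbourhood of $X^\pm$ and both points sit on the spacelike leaf $\Sigma_t$, on which $\sigma$ is strictly positive off the diagonal, this forces $X=X^\pm$, i.e. $x=x^\pm(t;s,y,\eta)$ and $(t,x;s,y,\eta)\in\Phi^\pm$.

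The delicate step will be the first inclusion, specifically the covariant bookkeeping needed to collapse, on $\Phi^\pm$, the $\eta_j$-derivative of \eqref{lonrentzian levi civita pm equation} to the single metric pairing $-g((\Xi^\pm)^\sharp,\partial_{\eta_j}X^\pm)$ --- this requires careful handling of the differentiation of the vector field $\operatorname{grad}_Z\sigma$ along the $\eta$-dependent base point $X^\pm$ and of the coincidence limits of the first and second $Z$-derivatives of $\sigma$ --- together with the Jacobi-field identity $g(\dot{\tilde X}^\pm,\partial_{\eta_j}\tilde X^\pm)\equiv0$. Once these are established, the reverse inclusion is comparatively soft, resting only on the degree-$1$ homogeneity of $\operatorname{Im}\varphi^\pm$ in $\eta$ and on the spacelike character of the Cauchy hypersurfaces $\Sigma_t$.
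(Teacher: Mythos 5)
Your proof is correct and follows the same overall two-inclusion structure as the paper, but with meaningfully different execution at the two key steps. For $\Phi^\pm\subseteq\mathcal{C}_{\varphi^\pm}$, the paper differentiates \eqref{phase function zero on flow} and invokes \eqref{phase function grad equal xi} together with ``Definition~\ref{definition levi civita flow}'' to conclude --- essentially asserting, without details, that the pairing $\xi^\pm_\alpha\,\partial_{\eta_j}(x^\pm)^\alpha=\Xi^\pm_c\,\partial_{\eta_j}(X^\pm)^c$ vanishes. You reach the same pairing by differentiating the explicit formula \eqref{lonrentzian levi civita pm equation} through the coincidence limits \eqref{ruse-synge property 1}--\eqref{ruse-synge property 2}, which is equivalent, and then you \emph{prove} the vanishing: split $\partial_{\eta_j}X^\pm$ into a reparametrisation multiple of $\dot{\tilde X}^\pm$ plus the Jacobi field $J_j$, kill the first piece by nullity of the geodesic, and kill the second by observing that $g(\dot{\tilde X}^\pm,J_j)$ is affine in the affine parameter with both initial value and initial slope equal to zero (the slope because $g$ of the initial velocity with itself is identically zero as a function of $\eta$). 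This is a genuine and welcome addition; note it tacitly uses the reading $\hat{(\pm\eta)}_\pm=\pm\hat\eta_\pm$, so that $\Xi^\pm(s)=\eta_\pm=\pm\|\eta\|_{h_s}\,\hat{(\pm\eta)}_\pm$ is a constant multiple of the initial covelocity --- this is the interpretation consistent with Lemma~\ref{lemma properties levi civita phase functions}(b), and it is worth making that explicit. For the reverse inclusion your argument is genuinely different from the paper's: the paper Taylor-expands $\varphi^\pm$ around $x=x^\pm$ and invokes non-degeneracy \eqref{phase function non degenerate} to exclude nearby critical points, then falls back on the global non-vanishing of $\partial_\eta\varphi^\pm$ away from the geodesic neighbourhood; you instead combine the degree-$1$ homogeneity of $\operatorname{Im}\varphi^\pm$ (Euler's identity at a critical point gives $\operatorname{Im}\varphi^\pm=0$) with strict positivity of the Ruse--Synge function on a spacelike (hence acausal) leaf $\Sigma_t$ to force $x=x^\pm$ directly, wherever the explicit formula holds. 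Your route is cleaner in that it yields the conclusion throughout the geodesic neighbourhood at one stroke, with no neighbourhood-matching step, while the paper's local Taylor argument is what naturally feeds into the later use of \eqref{phase function non degenerate}; both are valid.
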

\begin{proof}
The proof relies on standard arguments,  see \eg \cite{LSV,SaVa}. The inclusion $\Phi^\pm\subseteq \mathcal{C}_{\varphi^\pm}$ is obtained by differentiating \eqref{phase function zero on flow} with respect to $\eta$, and using \eqref{phase function grad equal xi} and Definition~\eqref{definition levi civita flow}. Now, by performing a Taylor expansion of $\varphi^\pm$ in the variable $x$ around $x=x^\pm$, it is not hard to see that there exist neighbourhoods $\mathcal{U}^\pm$ of $\Phi^\pm$ such that
\[
\mathcal{C}_{\varphi^\pm}\cap (\mathcal{U}\setminus \Phi^\pm)=\emptyset.
\]
The latter and the fact that $\varphi^\pm$ are chosen in such a way that $\partial_\eta \varphi^\pm\ne 0$ for $X$ outside of a geodesic neighbourhood of $X^\pm$ completes the proof.
\end{proof}

The basic properties of the positive and negative Lorentzian Levi-Civita phase functions are summarised by the following Lemma.
\begin{lemma}
\label{lemma properties levi civita phase functions}
\begin{enumerate}[(a)]
\item
The phase functions $\varphi^\pm$ are positively homogeneous in $\eta$ of degree 1:
\[
\varphi^\pm(t,x;s,y,\lambda\eta)=\lambda\,\varphi^\pm(t,x;s,y,\eta)
\]
for every $\lambda>0$.

\item
The positive and negative Lorentzian Levi-Civita phase functions are related as
\[
\varphi^+(t,x;s,y,\eta)=-\overline{\varphi^-(t,x;s,y,-\eta)}.
\]

\item
We have
\begin{equation}
\label{phase function zero on flow}
\varphi^\pm(t,x^\pm;s,y,\eta)=0,
\end{equation}
\begin{equation}
\label{phase function grad equal xi}
\left.\operatorname{grad}_x\varphi^\pm (t,x;s,y,\eta)\right|_{x=x^\pm}=\xi^\pm,
\end{equation}
\begin{equation}
\label{phase function non degenerate}
\left.\det \left(\frac{\partial^2\varphi^\pm(t,x;s,y,\eta)}{\partial x^\alpha\partial \eta_\beta}\right)\right|_{x=x^\pm}\ne 0,
\end{equation}
where $$x^\pm(t;s,y,\eta):=\iota^{-1}_t\left( X^\pm(t;s,y,\eta) \right)$$ and $$\xi^\pm(t;s,y,\eta):=\iota^*_t \left( \Xi^\pm(t;s,y,\eta) \right).$$
\end{enumerate}
\end{lemma}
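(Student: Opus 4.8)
The plan is to establish the four assertions (a)--(d) of Lemma~\ref{lemma properties levi civita phase functions} in sequence, exploiting the geometric definition of $\varphi^\pm$ in terms of the Levi-Civita flow and the classical properties of the Ruse-Synge world function $\sigma$.

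First I would prove the homogeneity statement (a). By the homogeneity property \eqref{positive homoegeity Levi Civita flow} of the reparameterised Levi-Civita flow, one has $X^\pm(t;s,y,\lambda\eta)=X^\pm(t;s,y,\eta)$ and $\Xi^\pm(t;s,y,\lambda\eta)=\lambda\,\Xi^\pm(t;s,y,\eta)$ for all $\lambda>0$. Inserting this into the defining formula \eqref{lonrentzian levi civita pm equation}: the first term $-\langle \Xi^\pm, \operatorname{grad}_Z\sigma(X,Z)|_{Z=X^\pm}\rangle$ scales linearly because $\Xi^\pm$ does and the point $X^\pm$ is unchanged; the second term $i\|\eta\|_{h_s}\sigma(X,X^\pm)$ scales linearly because $\|\lambda\eta\|_{h_s}=\lambda\|\eta\|_{h_s}$ and again $X^\pm$ is unchanged. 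Homogeneity on the region where $\varphi^\pm$ is smoothly continued follows because the continuation can (and should) be performed compatibly with the scaling, \ie one continues the homogeneous-degree-1 germ. For (b), the relation $\varphi^+(t,x;s,y,\eta)=-\overline{\varphi^-(t,x;s,y,-\eta)}$ follows from the observation that replacing $\eta\mapsto-\eta$ swaps the roles of future- and past-pointing null covectors, i.e. $(-\eta)_-=-\eta_+$, so the positive Levi-Civita flow with data $-\eta$ traces the same spacetime curve as the negative one, with cotangent vector $\Xi^-(t;s,y,-\eta)=-\Xi^+(t;s,y,\eta)$ and $x^-(t;s,y,-\eta)=x^+(t;s,y,\eta)$; complex conjugation then flips the sign of the imaginary part (the $i\|\eta\|_{h_s}\sigma$ term), while $\sigma$ is real-valued and symmetric, which reproduces the claimed identity after matching signs.

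Next I would turn to (c). The vanishing \eqref{phase function zero on flow}: on the flow one sets $X=X^\pm(t;s,y,\eta)$, so $\sigma(X,X^\pm)=\sigma(X^\pm,X^\pm)=0$, killing the imaginary term; and $\operatorname{grad}_Z\sigma(X,Z)|_{Z=X=X^\pm}=0$ since $\sigma$ has a critical point (a double zero) on the diagonal, killing the real term — hence $\varphi^\pm(t,x^\pm;s,y,\eta)=0$. For \eqref{phase function grad equal xi}, I differentiate \eqref{lonrentzian levi civita pm equation} in $x$ and evaluate at $x=x^\pm$. Using the standard Hamilton-Jacobi identity $\operatorname{grad}_X\big(\langle p, \operatorname{grad}_Z\sigma(X,Z)\rangle\big)$ evaluated along a geodesic: recall that $X\mapsto -\operatorname{grad}_Z\sigma(X,Z)$, with $Z$ fixed, generates the geodesic through $Z$, and its $X$-derivative at $X=Z$ is the identity, so the $x$-derivative of the first term at $x^\pm$ returns $\iota_t^*\Xi^\pm=\xi^\pm$; the contribution of the imaginary term vanishes at $x=x^\pm$ because it is $O(\sigma)$ and $\sigma$ together with its first $X$-derivative vanishes on the diagonal, so differentiating once and evaluating on the flow gives zero. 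Thus $\operatorname{grad}_x\varphi^\pm|_{x=x^\pm}=\xi^\pm$, as required. Finally \eqref{phase function non degenerate}: this is the nondegeneracy of the mixed Hessian $\varphi^\pm_{x^\alpha\eta_\beta}$ on the flow. The cleanest route is to note that, along the flow, the real part of $\varphi^\pm$ is a (real) phase function generating the bicharacteristic flow of $h^{(\pm)}$, for which nondegeneracy of the mixed Hessian is the classical statement that the Lagrangian manifold $\Phi^\pm$ projects diffeomorphically (locally) onto the base; equivalently, one can compute directly from the van Vleck-Morette determinant $\det(-\partial^2\sigma/\partial X^i\partial Z^j)\neq 0$ in a normal neighbourhood, which is nonvanishing precisely because we are inside a geodesically convex chart, and use the chain rule relating $\partial^2\varphi^\pm/\partial x^\alpha\partial\eta_\beta$ to this determinant times the Jacobian $\partial\Xi^\pm/\partial\eta$, the latter being invertible by the homogeneity-degree-1 property in $\eta$ combined with transversality.

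The main obstacle I anticipate is \eqref{phase function non degenerate}: keeping careful track of how the reparameterisation from proper time $\tau$ to the global time $t$ interacts with the differentiations, and verifying that neither the imaginary part nor the $t$-reparameterisation Jacobian degrades the rank of the mixed Hessian on $\Phi^\pm$. The bookkeeping for the $\eta$-derivatives of $\Xi^\pm$ (which carries the homogeneity-1 scaling) versus the $x$-derivatives of $\operatorname{grad}_Z\sigma$ must be organised so that the resulting block structure is manifestly invertible; this is where I would lean most heavily on the analogous Riemannian computations in \cite{LSV, SaVa} and the world-function identities in \cite{PPV}, adapting them to the Lorentzian signature and the null-covector initial data $\hat\eta_\pm$.
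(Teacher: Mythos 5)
Your treatment of (a), (b), and the first two claims of (c) is essentially correct and closely parallels what the paper does (the paper simply declares (a) and (b) ``straightforward'' and establishes \eqref{phase function zero on flow}--\eqref{phase function grad equal xi} by differentiating the definition and invoking the standard world-function identities $\sigma(X,X)=0$, $\nabla_a\sigma(X,Y)|_{X=Y}=0$ and $\nabla_a\nabla'_b\sigma(X,Y)|_{X=Y}=-g_{ab}$). One minor slip in (b): your sentence about which flow ``traces the same spacetime curve as the negative one'' is garbled, but the two identities you actually use, $x^-(t;s,y,-\eta)=x^+(t;s,y,\eta)$ and $\Xi^-(t;s,y,-\eta)=-\Xi^+(t;s,y,\eta)$, are the correct ones and do the job.

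The genuine gap is in your proposed proof of the nondegeneracy \eqref{phase function non degenerate}. Both routes you sketch are wrong for the same reason: they only work away from caustics, whereas \eqref{phase function non degenerate} must hold \emph{globally in $(t,s,y,\eta)$}, which is precisely the point of taking $\varphi^\pm$ complex-valued. Saying that ``the real part of $\varphi^\pm$ is a real phase function generating the bicharacteristic flow, for which nondegeneracy of the mixed Hessian is the classical statement that $\Phi^\pm$ projects diffeomorphically onto the base'' is exactly the statement that \emph{fails} at caustics for real phase functions; if it were true there would be no need for the complex construction of Laptev--Safarov--Vassiliev at all. Similarly, the fallback via the van Vleck determinant being nonzero ``because we are inside a geodesically convex chart'' restricts $x$ to a normal neighbourhood of $x^\pm$, but \eqref{phase function non degenerate} is evaluated \emph{at} $x=x^\pm$ for arbitrarily large $t-s$, i.e.\ along the whole flow, not within one convex chart. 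The argument the paper uses is different and caustic-proof: one computes, using the same world-function identities already invoked for \eqref{phase function grad equal xi}, that
\begin{equation*}
\left.\operatorname{Im}\partial_{x^\alpha}\partial_{x^\beta}\varphi^\pm\right|_{x=x^\pm}
=\|\eta\|_{h_s}\,g_{\alpha\beta}(X^\pm),
\end{equation*}
which is positive definite for all $(t,s,y,\eta)$ with $\eta\neq 0$; one then invokes \cite[Corollary~2.4.5]{SaVa}, which states that positive-definiteness of $\operatorname{Im}\varphi_{xx}$ on the critical set $\mathcal{C}_{\varphi^\pm}=\Phi^\pm$ (this is where Proposition~\ref{proposition stationary points of the Levi-Civita phase functions} enters) forces $\det\varphi^\pm_{x^\alpha\eta_\beta}\neq 0$ there. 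This is the mechanism by which the imaginary part rescues nondegeneracy at caustics, and it is not reducible to the real-phase or convex-neighbourhood arguments you propose.
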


\begin{proof}
Properties (a) and (b) follow in a straightforward manner from Definitions~\ref{definition levi civita flow} and~\ref{definition levi civita phase functions}.

As to property (c), it is well known \cite[Sec.~4.1]{PPV} that the Ruse-Synge world function satisfies
\begin{equation}
\label{ruse-synge property 1}
\sigma(X,X)=0, \qquad \nabla_{a}\sigma(X,Y)|_{X=Y}=0,
\end{equation}
\begin{equation}
\label{ruse-synge property 2}
\nabla_{a} \nabla_{b} \,\sigma(X,X)=-\left. \nabla_{a} \nabla'_{b} \,\sigma(X,Y)\right|_{X=Y}=g(a,b)(X),
\end{equation}
where the prime in $\nabla'$ indicates that the Levi-Civita connection acts on the second variable.

Hence, \eqref{phase function zero on flow} follows immediately from \eqref{ruse-synge property 1}.

Differentiating \eqref{lonrentzian levi civita pm equation} with respect to $x^\alpha$ and using \eqref{ruse-synge property 1}--\eqref{ruse-synge property 2}, we obtain
\begin{align*}
\left.\partial_{x^\alpha}\varphi^\pm\right|_{x=x^\pm}
&
=-\left.\Xi^\pm_a g^{ab}(X^\pm) \nabla_\alpha\nabla'_b\,\sigma(X,Z)\right|_{Z=X=(t,x^\pm)}
\\
&+ i\,\|\eta\|_{h_s}
\left.\nabla_a\sigma(X,X^\pm(t;s,y,\eta))\right|_{X=(t,x^\pm)}
\\
&
=
\Xi^\pm_a g^{ab}(X^\pm)g_{\alpha b}(X^\pm)=\Xi^\pm_a \delta_\alpha{}^a
=\xi^\pm_\alpha,
\end{align*} 
which gives us \eqref{phase function grad equal xi}.

Finally,  using once again \eqref{ruse-synge property 1} and \eqref{ruse-synge property 2},  a straightforward computation yields
\begin{equation}
\label{phi xx positive definite}
\left. \operatorname{Im} \partial_{x^\alpha x^\beta}\varphi^\pm \right|_{x=x^\pm}=i\,\|\eta\|_{h_s}g_{\alpha\beta}(X^\pm).
\end{equation}
Formula \eqref{phi xx positive definite} tells us that $\left. \operatorname{Im} \partial_{x^\alpha x^\beta}\varphi^\pm \right|_{x=x^\pm}$ is positive definite. Then \eqref{phase function non degenerate} follows from \cite[Corollary~2.4.5]{SaVa} and Proposition~\ref{proposition stationary points of the Levi-Civita phase functions}.
\end{proof}

Our Levi-Civita phase functions $\varphi^\pm$ warrant a number of remarks:
\begin{enumerate}
\item 
 $\varphi^\pm$ are \emph{complex-valued}, as opposed to real-valued, as customary in the classical constructions of hyperbolic parametrices.  This is crucial in ensuring,  in view of the results from \cite{LSV} (see also \cite{wave,lorentzian,dirac,review}), that we can represent the integral kernel of $U^{(\pm)}(t;s)$ as a single oscillatory integral globally in spacetime.  In particular,  the complexity ensures that condition \eqref{phase function non degenerate} holds for all values of the argument.  Real-valued phase functions fail to satisfy \eqref{phase function non degenerate} in the presence of caustics.

\item
$\varphi^\pm$ are geometric in nature, in that they are fully specified by the Lorentzian metric structure, at least in a neighbourhood of the Levi-Civita flow.

\item
The way one continues $\varphi^\pm$ outside of such neighbourhood does not affect the singular part of the oscillatory integral. Different choices of smooth continuations result in an error $=0 \mod \Psi^{-\infty}$,  as one can show by a standard
(non)stationary phase argument.
\end{enumerate}

\begin{remark}
Observe that in the Riemannian setting --- namely, when $\hat \Dir_t$ is independent of $t$,  in other words, the ultrastatic case --- the flow \eqref{levi civita flow map} is nothing but Hamiltonian flow of $\pm(\sqrt{-\Delta})_\mathrm{prin}$, which satisfies
\[
x^-(t;y,\eta)=x^+(t;y,-\eta),
\]
\[
\xi^-(t;y,\eta)=-\xi^+(t;y,-\eta).
\]
Note that a Riemannian version of our Levi-Civita phase functions were used in \cite{wave} and \cite{dirac}, whereas an analogue of $\varphi^+$ in the Lorentzian setting appeared in \cite{lorentzian}.
\end{remark}


\subsubsection{The algorithm}\label{algorithm}

Our next task is to write down an algorithm to determine the matrix-functions $\mathfrak{a}^\pm$ so that the oscillatory integrals $\mathcal{I}_{\varphi^\pm}(\mathfrak{a})$ defined in~\eqref{oscillatory integral algorithm 1} are the (Schwartz) kernel of $U_L^{(\pm)}$.
\medskip

{\bf Step 1}.  Set $\chi^\pm $ equal to $0$ and act with the operator $-i\partial_t+\overline \Dir_t$ on the oscillatory integrals~\eqref{oscillatory integral algorithm 1}. This produces new oscillatory integrals
$
\mathcal{I}_{\varphi^\pm}(b^\pm),
$
where
\begin{equation}
\label{amplitude bpm}
b^\pm(t,x;s,y,\eta)=e^{-i\varphi^\pm}[w^\pm]^{-1}
(-i\partial_t+\overline \Dir_t)\left(
e^{i\varphi^\pm}\mathfrak{a}^\pm\,w^\pm
\right).
\end{equation}
Note that property \eqref{phase function non degenerate} ensures that $w^\pm$ do not vanish in a neighbourhood of the Levi-Civita flow.

\

{\bf Step 2}.  The matrix-functions \eqref{amplitude bpm}, unlike the symbols $\mathfrak{a}^\pm$,  depend on the variable $x$. Step 2 consists in removing the dependence of $b^\pm$ on $x$ by means of a procedure known as \emph{reduction of the amplitude}.

To begin with, let us observe that one can equivalently recast the phase functions $\varphi^\pm$ using only ``Cauchy surface information'' as
\begin{multline}
\label{lonrentzian levi civita pm equation equivalent}
\varphi^\pm(t,x;s,y,\eta)=-\frac12\langle \xi^\pm(t;s,y,\eta), \left.\operatorname{grad}_z\operatorname{dist}^2_{\Sigma_t}(x,z)\right|_{z=x^\pm(t;s,y,\eta)} \rangle
\\
+ \frac i2\,\|\eta\|_{h_s}\operatorname{dist}^2_{\Sigma_t}(x,x^\pm(t;s,y,\eta)),
\end{multline}
compare with \eqref{lonrentzian levi civita pm equation}. Here the expression \eqref{lonrentzian levi civita pm equation equivalent} holds for $x$ in a neighbourhood of $x^\pm(t;s,y,\eta)$.  The essential idea to exclude the dependence on $x$ is to expand $b^\pm$ in power series in $x$ about $x=x^\pm$ and integrate by parts.  One has
\begin{equation}
\label{expansion for b}
b^\pm=\left. b^\pm \right|_{x=x^\pm}+(x-x^\pm)^\alpha c^\pm_\alpha=\left. b^\pm \right|_{x=x^\pm}+\partial_{\eta^\alpha}\varphi^\pm \,\tilde c^\pm_\alpha,
\end{equation}
for some covectors $c^\pm=c^\pm(t,x;y,\eta)$ and $\tilde c^\pm=\tilde c^\pm(t,x;y,\eta)$.  In writing \eqref{expansion for b} we are using the fact that $\left.\partial_\eta \varphi^\pm\right|_{x=x^\pm}=0$, which follows from \eqref{phase function zero on flow}. Of course, $\tilde c^\pm$ can be written explicitly in terms of $c^\pm$ and $\varphi^\pm$.
Plugging \eqref{expansion for b} into the expression for $\mathcal{I}_{\varphi^\pm}(b^\pm)$, and formally integrating by parts one obtains
\begin{multline}
\label{amplitude reduction euristics}
\mathcal{I}_{\varphi^\pm}(b^\pm)=\frac{1}{(2\pi)^{3}}\int_{\T'_y\Sigma} e^{i\varphi^\pm} b^\pm \bigg|_{x=x^\pm} \hspace{-4mm} w^\pm \,d\eta \\
 +\frac{i}{(2\pi)^{3}}\int_{\T'_y\Sigma} e^{i\varphi^\pm} (w^\pm)^{-1} \left( \frac{\partial}{\partial \eta_\alpha} \tilde c^\pm_\alpha\, w^\pm \right) w^\pm\,d\eta.
\end{multline}
Now, the the amplitude of the first integral in the RHS of \eqref{amplitude reduction euristics} is $x$-independent, whereas that of the second integral is $x$-dependent but of one degree lower as a polyhomogeneous symbol. Repeated iterations of the above argument allow one to turn $\mathcal{I}_{\varphi^\pm}(b^\pm)$ into an oscillatory integral with $x$-independent amplitude plus an oscillatory integral whose amplitude is a symbol of order $-\infty$. 

More precisely,  put
\begin{equation*}
L_\alpha^{\pm}:=\left[(\partial^2_{x\eta}\varphi^\pm)^{-1}\right]_\alpha{}^\beta \dfrac{\partial}{\partial x^\beta}
\end{equation*}
and define
\begin{subequations}\label{operators mathfrak S with j}
\begin{gather*}
\mathfrak{S}_0^{\pm}:=\left.\left( \,\cdot\, \right)\right|_{x=x^\pm(t;s,y,\eta)}\,, \label{mathfrak S0 with j}\\
\mathfrak{S}_{-k}^{\pm}:=\mathfrak{S}_0^{\pm} \left[ i \, [w^\pm]^{-1} \frac{\partial}{\partial \eta_\beta}\, w^{\pm} \left( 1+ \sum_{1\leq |\boldsymbol{\alpha}|\leq 2k-1} \dfrac{(-\partial_\eta\varphi^\pm)^{\boldsymbol{\alpha}}}{\boldsymbol{\alpha}!\,(|\boldsymbol{\alpha}|+1)}\,L^{\pm}_{\boldsymbol{\alpha}} \right) L^{\pm}_\beta  \right]^k\,,\label{mathfrak Sk with j}
\end{gather*}
\end{subequations}
where $\boldsymbol{\alpha}\in \mathbb{N}^3$, $|\boldsymbol{\alpha}|=\sum_{j=1}^3 \alpha_j$ and $(-\partial_\eta\varphi^{\pm})^{\boldsymbol{\alpha}}:=(-1)^{|\boldsymbol{\alpha}|}\, \prod_{j=1}^3(\partial_{\eta_j}\varphi^\pm)^{\alpha_j}$. When acting on a function positively homogeneous in momentum, the operators $\mathfrak{S}^\pm_{-k}$ excludes the dependence on $x$ whilst decreasing the degree of homogeneity by $k$.

The {\emph{amplitude-to-symbol operators}
\begin{equation*}
\mathfrak{S}^\pm:=\sum_{k=0}^{+\infty} \mathfrak{S}^\pm_k
\end{equation*}
map the $x$-dependent amplitudes $\mathfrak{b}^\pm(t,x;s,y,\eta)$ to the $x$-independent symbols
\begin{equation*}
\mathfrak{b}^\pm(t;s,y,\eta)\sim \sum_{j=-1}^{+\infty} \mathfrak{b}^\pm_{-j}(t;s,y,\eta), \qquad \mathfrak{b}^\pm_{-j}:=\sum_{k+s=j} \mathfrak{S}^{\pm}_{-k}b^\pm_{-s},
\end{equation*}
and we have
\begin{equation}
\label{identity amplitude to symbol oscillatory integrals}
\mathcal{I}_{\varphi^\pm}(b^\pm)=\mathcal{I}_{\varphi^\pm}(\mathfrak b^\pm) \mod C^\infty,
\end{equation}
where $\mod C^\infty$ means that LHS and RHS only differ by an infinitely smooth function.
We refer the reader to \cite[Appendix~A]{wave} for detailed exposition and proofs, up to appropriate straightforward adaptations.

\

{\bf Step 3}.  Equations \eqref{amplitude bpm} and \eqref{identity amplitude to symbol oscillatory integrals} imply that for \eqref{oscillatory integral algorithm 1} to be the Schwartz kernel of $U(t,s)$ (recall \eqref{propagator full}) one needs to have
\begin{equation}
\label{transport equation formula 0}
\mathfrak{b}^\pm(t;s,y,\eta)=0.
\end{equation}
Imposing condition \eqref{transport equation formula 0} degree of homogeneity by degree of homogeneity results in a hierarchy of transport equations --- matrix ordinary differential equations in the variable $t$ for the homogeneous components (matrix functions)  $\mathfrak{a}^\pm_{0}, \mathfrak{a}^\pm_{-1}, \ldots,$ of $\mathfrak{a}^\pm$. More explicitly, these equations read
\begin{subequations}
\begin{equation}
\label{zeroth transport equation}
\left.b^\pm_1\right|_{x=x^\pm(t;s,y,\eta)}=0\,,
\end{equation}
\begin{equation}
\label{first transport equation}
\mathfrak{S}^\pm_{-1}b^\pm_{1}+\mathfrak{S}^\pm_0 b^\pm_0=0\,,
\end{equation}
\begin{equation}
\label{second transport equation}
\mathfrak{S}^\pm_{-2}b^\pm_{1}+\mathfrak{S}^\pm_{-1} b^\pm_0+\mathfrak{S}^\pm_{0} b^\pm_{-1}=0\,,
\end{equation}
\begin{equation*}
\label{transport equation dots}
\dots\,.
\end{equation*}
\end{subequations}
We call \eqref{zeroth transport equation} the \emph{zeroth} transport equation,  \eqref{first transport equation} the \emph{first} transport equation,  \eqref{second transport equation} the \emph{second} transport equations, and so on.  Initial conditions for our transport equations are a delicate matter, as explained in Subsections~\ref{sec:Cauchy ev op} and~\ref{sec:Lor pdo proj}, and are obtained by requiring that
\begin{equation*}
U_L^{(\pm)}(s;s)=P_\pm(s) \mod \Psi^{-\infty},
\end{equation*}
where $P_\pm(s)$ are the pseudodifferential operators given by Theorem~\ref{theorem pseudodifferential projections} for $A_t=\overline \Dir_t$.

\

It only remains to reconcile the algorithmic construction of $P_\pm$ from Subsection~\ref{Algorithmic construction} with our particular representation of pseudodifferential operators $U_L^{(\pm)}(s;s)$ as oscillatory integrals of the form 
\begin{equation*}
\label{particular representation of U(s,s)}
U_L^{(\pm)}(s;s)=\int_{\Sigma} \left.\mathcal{I}_{\varphi^\pm}(\mathfrak{a}^\pm) \right|_{t=s}\rho_{h_s}(y)\,dy
\mod \Psi^{-\infty}\,.
\end{equation*}

On the one hand, in an arbitrary coordinate system, the same for $x$ and $y$, we can represent the Schwartz kernel of $P_\pm(s)$ as
\begin{equation}
\label{initial condition expansion 5}
\frac{1}{(2\pi)^3}\int_{\T'_y\Sigma_s} e^{i(x-y)^\alpha\eta_\alpha} \,\mathfrak{u}^\pm(s,y,\eta)\,d\eta
\end{equation}
where $\mathfrak{u}^\pm$ are determined via the algorithm given in Subsection~\ref{Algorithmic construction} for the choice of the right quantisation.  Observe that $\mathfrak{u}^\pm$ are \emph{not} invariantly defined, but depend on the choice of local coordinates.

On the other hand, our propagator construction involves representing the Schwartz kernel of $U_L^{(\pm)}(s;s)$ as
\begin{equation}
\label{integral kernel U(s;s)}
\frac{1}{(2\pi)^{3}}\int_{\T'_y\Sigma_s} e^{i\phi(x;s,y,\eta)} \mathfrak{a}^\pm(0;s,y,\eta)\, \chi_0(x;s,y,\eta)\,w_0(x;s,y,\eta)\,d\eta,
\end{equation}
where
\begin{itemize}
\item
the function $\phi(x;s,y,\eta):=\varphi^+(0,x;s,y,\eta)=\varphi^-(0,x;s,y,\eta)$ is the \emph{time-independent} Levi-Civita phase function,

\item
$\chi_0$ is a smooth cut-off localising the integration in a neighbourhood of the diagonal and away from the zero section

\item
and
\[
w_0(x;s,y,\eta):=[\rho_{h_s}(y)\rho_{h_s}(x)]^{-1/2}[\det{}^2 \,\partial_{x^\mu}\partial_{\eta_\nu}\phi(x;s,y,\eta)]^{1/4}.
\] 
Here the branch of the complex root is chosen in such a way that $w_0(y;s,y,\eta)=[\rho_{h_s}(y)]^{-1}$.
\end{itemize}

Working in the coordinate system chosen above, the same for $x$ and $y$, we have
\begin{equation*}
\label{initial condition expansion 1}
\phi(x;s,y,\eta)=(x-y)^\alpha\eta_\alpha+O(|x-y|^2)
\end{equation*}
as $\operatorname{dist}_{\Sigma_s}(x,y)\to 0$. Here and further on in this section tensor indices are raised and lowered with respect to the metric $h_s$. Hence,
\begin{equation}
\label{initial condition expansion 2}
e^{i\phi(x;s,y,\eta)}=e^{i(x-y)^\alpha\eta_\alpha}(1+O(|x-y|^2))
\end{equation}
and
\begin{equation}
\label{initial condition expansion 3}
w_0(x;s,y,\eta)=1+O(|x-y|).
\end{equation}
Substituting \eqref{initial condition expansion 2} and \eqref{initial condition expansion 3} into \eqref{integral kernel U(s;s)} and setting $\chi_0=1$, we obtain
\begin{equation*}
\label{integral kernel U(s;s) expanded}
\frac{1}{(2\pi)^{3}}\int_{\T'_y\Sigma_s} e^{i(x-y)^\alpha\eta_\alpha} \mathfrak{a}^\pm(0;s,y,\eta) (1+r_0(x;s,y,\eta))\,d\eta,
\end{equation*}
where $r_0(x;s,y,\eta)=O(|x-y|)$.  Excluding the $x$-dependence in the amplitude of \eqref{integral kernel U(s;s) expanded} by means of the operator
\begin{equation*}
\label{simplified amplitude to symbol initial condition}
\mathcal{S}(\cdot):=\left.\left[\exp \left(i \dfrac{\partial^2}{\partial x^\alpha\partial \eta_\alpha}\right) \right]\right|_{x=y}
\end{equation*}
we arrive at 
\begin{multline}
\label{integral kernel U(s;s) expanded reduced}
\frac{1}{(2\pi)^{3}}\int_{\T'_y\Sigma_s} e^{i\phi(x;s,y,\eta)} \mathfrak{a}^\pm(0;s,y,\eta)\, \chi_0(x;s,y,\eta)\,w_0(x;s,y,\eta)\,d\eta
\\
=\frac{1}{(2\pi)^{3}}\int_{\T'_y\Sigma_s} e^{i(x-y)^\alpha\eta_\alpha}\, \tilde{\mathfrak{a}}^{\pm}(s,y,\eta) \,d\eta \mod C^\infty
\end{multline} 
where the asymptotic expansions of $\tilde{\mathfrak{a}}^{\pm}(s,y,\eta)$ and $\mathfrak{a}^{\pm}(0;s,y,\eta)$ are related as
\begin{equation*}
\label{initial condition expansion 4}
\tilde{\mathfrak{a}}^{\pm}_{-k}=\left.\mathfrak{a}^{\pm}_{-k}\right|_{t=0}+\text{terms involving derivatives in }\eta\text{ of }\left.\mathfrak{a}^\pm_{-0}\right|_{t=0}, \ldots, \left.\mathfrak{a}^\pm_{-k+1}\right|_{t=0}.
\end{equation*}
Then,  by comparing \eqref{initial condition expansion 5} and \eqref{integral kernel U(s;s) expanded reduced}, we see that our initial conditions $\mathfrak{a}^{\pm}_{-k}(0;s,y,\eta)$, $k=0,1,2,\ldots$, are determined algebraically by iteratively imposing
\begin{equation*}
\label{initial condition expansion 6}
\tilde{\mathfrak{a}}^{\pm}_{-k}(s,y,\eta)=\mathfrak{u}^\pm_{-k}(s,y,\eta), \qquad k=0,1,2,\ldots.
\end{equation*}

\begin{remark}
Even though the intermediate steps depend on the choice of local coordinates, the final outcome are invariantly defined smooth matrix functions $\mathfrak{a}^{\pm}_{-k}(0;s,y,\eta)$. This is an important advantage of representing our pseudodifferential operators in the form \eqref{integral kernel U(s;s)}. See also \cite[Section~4]{dirac} for further discussions on the matter.
\end{remark}

\section{Hadamard states and Feynman propagators}\label{sec:appl}

In this last section we will exploit the results from the previous sections to construct Hadamard states for quantum Dirac fields.  This will be done within the framework of the so-called {\em algebraic approach to quantum field theory} (AQFT), in which  the quantisation of a free field theory is realised as a two-step procedure.
First, one assigns to a classical physical system (described either by the solutions space or by the space of classical observables)
 a unital $*$-algebra $\mathcal{A}$, whose elements are interpreted as observables of the system at hand.
Then,  one determines the admissible physical states of the system by identifying a suitable subclass of the linear, positive and normalised functionals $\omega\colon\mathcal{A}\to\mathbb{C}$ on $\mathcal A$.


\subsection{The algebra of Dirac solutions}\label{sec:Dir alg sol}
Let us endow $\sol_{sc }(\overline\Dir_\M)$,  the space of spacelike compact solutions of the reduced Dirac operator $\overline\Dir_\M$ for the metric $g$ (\cf Proposition~\ref{prop:Green}),
with the positive definite Hermitian scalar product
\begin{align}\label{eq:Herm prod}
 \scalar{\cdot}{\cdot}= \int_{\Sigma}\fiber{{\rm tr}_\Sigma\cdot}{\gamma_\M(e_0){\rm tr}_\Sigma\cdot} d\vol_{\Sigma}\,, 
 \end{align}
where ${\rm tr}_\Sigma$ is the map which assigns to a given solution the corresponding initial datum on $\Sigma$,  $\gamma_\M(e_0)$ is the Clifford multiplication by the global vector field $e_0$ defined in accordance with \eqref{Eq: Sigma unit normal}.
One can show that the scalar product~\eqref{eq:Herm prod} does not depend on the choice of the Cauchy hypersurface $\Sigma$ --- see  \eg~\cite[Lemma 3.17]{CQF1}.

As in Section~\ref{sec:Dirac}, let us denote the adjuction map by $\Upsilon$ and set
\begin{align}\label{Hilb sol}
	\scrH^\oplus := \overline{\Big( \sol_{sc }(\overline\Dir_\M)\oplus \Upsilon\sol_{sc }(\overline\Dir_\M) , \scalar{\cdot}{\cdot}_{\oplus}\Big)}^{\| \cdot\|_{\oplus}}
	\,,
\end{align}
where $\scalar{\cdot}{\cdot}_{\oplus}$ is the natural scalar product on the direct sum induced by~\eqref{eq:Herm prod}, and $\|\cdot\|_{\oplus}$ is norm induced by $(\,,\,)_{\oplus}$.  
Moreover, let $\Theta\colon \scrH^\oplus\to \scrH^\oplus$ be the antilinear involution defined by 
\begin{equation}\label{invol}
\Theta(\Phi_1\oplus\Upsilon\Phi_2):=(\Phi_2)\oplus\Upsilon\Phi_1\,.
\end{equation}

\begin{definition}\label{def:alg Dirac sol}
The \textit{algebra of Dirac solutions} is the unital complex $*$-algebra $\mathcal{A}$ freely generated by the abstract quantities $\Xi(\Phi)$, $\Phi\in \scrH^\oplus$,  and the unit $1_{\mathcal{A}}$, together with the following relations:
\begin{itemize}
\item[(i)] Linearity: $\Xi(\alpha \Phi_1 + \beta \Phi_2) =\alpha \Xi(\Phi_1) + \beta\Xi(\Phi_2)$,
\item[(ii)] Hermiticity: $\Xi(\Phi_1)^*=\Xi(\Theta \Phi_1)$,
\item[(iii)] Canonical anti-commutation relations (CARs):
$$ 
\Xi(\Phi_1) \cdot\Xi(\Phi_2) + \Xi(\Phi_2)\cdot \Xi(\Phi_1) =0,
$$ 
$$
\Xi(\Phi_1) \cdot\Xi(\Phi_2)^* + \Xi(\Phi_2)^*\cdot \Xi(\Phi_1) = \scalar{\Phi_1}{\Phi_2} \,1_{\mathcal{A}} \,,
$$
\end{itemize}
for all $\Phi_1,\Phi_2\in \scrH^\oplus$ and $\alpha,\beta\in\CC$.
\end{definition}
In fact, $\mathcal{A}$ can be completed in a unique way to a $C^*$-algebra \cite{araki},  with $C^*$-norm induced by the Hilbert structure of $ \scrH^\oplus$.
With slight abuse of notation,  we shall henceforth regard $\mathcal{A}$ as a $C^*$-algebra.

\begin{remark}\label{rmk:algObs}
We should mention that the algebra of Dirac solutions $\mathcal{A}$ cannot, strictly speaking,  be considered an algebra of observables,  as spacelike separated observables are required to commute and elements of  $\mathcal{A}$ do not fulfil such requirement.  A good candidate for an algebra of observables is the subalgebras  $\mathcal{A}_{\text{obs}} \subset \mathcal{A}$ composed by even elements, namely the subalgebra formed by  linear combinations of products of an even number of generators,  which are invariant under the action of $\Spin_0(1,n)$ (extended to $\mathcal{A}$).
We refer the reader to~\cite{DHP} for further details.
\end{remark}

\subsection{Quasifree Hadamard states}\label{sec:Hadam}

The second step in the quantisation of a free field theory consists in the identification of (algebraic) states.  Once that a state is specified, the Gelfand--Naimark--Segal (GNS) construction guarantees the existence of a representation of the quantum field algebra
as (in general,  unbounded) operators defined in a common dense subspace of some Hilbert
space. We will not worry here about the explicit construction of such representation, but limit ourselves to recall some basic definitions needed later on (see~\cite{IgorValter} for
a general discussion also pointing to several open questions).

\begin{definition}
Given a complex unital $*$-algebra $\mathcal{A}$ we call  \textit{(algebraic) state} any linear functional $\omega:\mathcal A \to \CC$ that is positive, \ie $\omega(\aa^*\aa)\geq 0$ for any $\aa\in\mathcal A $, and normalised, \ie $\omega(1_\mathcal{A})=1$. 
\end{definition}

Since a generic element of the algebra of Dirac solutions can be written as a polynomial in the generators,  in order to specify a state it suffices
to prescribe its action on monomials,  the so-called {\it $n$-point functions}:
\begin{equation}
\label{n point function}
\omega_n(\f_1,\dots, \f_n):= \omega(\Xi(\f_1)\cdots\Xi(\f_n))\,.
\end{equation}

In this paper, we restrict our attention to the subclass of so-called \emph{quasifree} states,  fully determined by their 2-point distributions.

\begin{definition}\label{def:quasifree}
A state $\omega$ on $\mathcal{A}$ is \textit{quasifree} if its $n$-point functions satisfy
	\begin{align*}
		\omega_n(\f_1,\dots,\f_n)
		=\begin{cases}
			0 & n\textrm{ odd}\\
			\sum\limits_{\sigma \in S'_n} (-1)^{\text{\rm{sign}}(\sigma)} \prod\limits_{i=1}^{n/2}
			\;\omega_2(\f_{\sigma(2i-1)},
			\f_{\sigma(2i)}) & n \textrm{ even}
		\end{cases}\,,
	\end{align*}
	where~$S'_n$ denotes the set of ordered permutations of $n$ elements.
\end{definition}


Unlike a free quantum field theory in Minkowski spacetime, where the unique Poincaré-invariant state -- known as Minkowski vacuum -- stands out as a distinguished element in the space of all states, on a general curved
spacetime, which may not have (geometric) symmetries at all,  there is no clear way of identifying a natural state.
A widely accepted criterion to select physically meaningful states is the \textit{Hadamard condition} \cite{Radzikowski-96,Sahlmann-Verch-01}.
The latter, among other useful properties, ensures the finiteness of the quantum fluctuations of the expectation
value of observable. Furthermore,  it allows one to construct Wick polynomials \cite{HW1} and other observable quantities (such as the stress energy tensor) by means of a covariant
scheme \cite{HM}, encompassing a locally covariant ultraviolet renormalization \cite{HW} (see also \cite{IgorValter} for a recent pedagogical review).
These states have also been employed in the study of the Blackhole radiation \cite{gerardUnruh, MP}, in cosmological models  \cite{DFMR},  in applications to spacetime models \cite{FMR1,FMR2},  to name but a few examples.

 For the sake of convenience, we recall below the Hadamard condition in the form a microlocal condition on the wavefront set of the 2-point distribution after \cite{Radzikowski-96},  rather than the 
equivalent geometric version based on the (local) Hadamard parametrix \cite{review,M}.

\

For the remainder of the paper,  $\omega$ will denote a quasifree states on the algebra of Dirac solutions $\mathcal{A}$.
The map $ (\Phi_1,\Phi_2) \mapsto \omega_2(\Phi_1,\Phi_2)$ can be extended by linearity to the space of finite  linear combinations of sections $\Phi_1 \otimes  \Phi_2 \in
\Gamma_c\big((\S\M|_\Sigma\oplus\S^*\M|_\Sigma)\otimes(\S\M|_\Sigma\oplus\S^*\M|_\Sigma )\big)$.
If we impose continuity with respect to the usual topology on the space of compactly supported test sections  we can uniquely extend $2$-point function to a  distribution 
in  $\Gamma'_c\big((\S\M|_\Sigma\oplus\S^*\M|_\Sigma )\otimes (\S\M|_\Sigma\oplus\S^*\M|_\Sigma ) \big)$ which we shall hereafter denote by the same symbol $\omega_2$.\\
Any quasifree state ${\omega}:\mathcal{A}\to\CC$ is defined by its {\em Cauchy surface covariances} $\lambda^\pm$ via the identities
\begin{equation}
\label{relation hadamard state and cauchy surface covariances}
\omega_2(\Phi_1, \Theta\Phi_2)=\lambda^+(\Phi_1, \Phi_2), \qquad  \omega_2(\Theta\Phi_2, \Phi_1)=\lambda^-(\Phi_1, \Phi_2),
\end{equation}
where $\lambda^\pm$ are bidistribution satisfying
\begin{itemize}
\item[(i)]    $\lambda^+(\Phi_1, \Phi_2) + \lambda^-( \Phi_2, \Phi_1)=\scalar{\Phi_1}{\Phi_2}_\oplus 1_{\mathcal{A}}$ ,
\item[(ii)]    $\lambda^\pm(\Phi_1, \Phi_1) \geq 0$,
\end{itemize}
for all $\Phi_1,\Phi_2 \in\scrH^\oplus$.
\medskip

\begin{definition}\label{prop:equiv states}
Given Cauchy surface covariances $\lambda^\pm$ on the algebra of Dirac solutions,  we call \emph{spacetime covariances} the bidistributions $\Lambda^\pm \in \Gamma'_c\big((\S\M\oplus\S^*\M )\otimes (\S\M\oplus\S^*\M ) \big)$ defined as
$$ \Lambda^\pm(\f_1, \f_2) := \lambda^\pm( \Phi_1, \Phi_2), $$
where $\Phi_1$, $\Phi_2$ are the unique sections of the spinor bundle satisfying $\Phi_1=\overline\G^\oplus \f_1$, $\Phi_2=\overline\G^\oplus \f_2$,  $\overline\G^\oplus:=\overline\G\oplus \overline\G\Upsilon^{-1}$,  and $\overline \G$ is the causal propagator for the reduced Dirac operator $\overline \Dir_M$.
\end{definition}

%
%

We are finally in a position to state the Hadamard condition for $\omega$ in terms of a wavefront set condition for the corresponding spacetime covariances.  This formulation was introduced in~\cite{gerard0} and it is equivalent to Definition~\ref{def: Hadamard state orig}.  We adopt here the standard convection that the wavefront set of a vector-valued distribution is  the union of the wavefront sets of its components in an arbitrary but fixed local frame.

\begin{definition}\label{def: Hadamard state}
 A bidistribution $\omega_2 \in \Gamma_{c}'\big((\S\M|_\Sigma\oplus \S^*\M|_\Sigma)\otimes (\S\M|_\Sigma\oplus \S^*\M|_\Sigma)\big)$ is called of \textit{Hadamard form} if and only if the associated spacetime covariances $\Lambda^\pm$ (see Definition~\ref{prop:equiv states} and equation~\eqref{relation hadamard state and cauchy surface covariances}) have the following wavefront sets:
$$\operatorname{WF}(\Lambda^\pm)=\{(X,Y,k_X,-k_Y)\in \T^*(\M\times\M)\backslash\{0\}|\ (X,k_X)\sim(Y,k_Y),\ \pm k_X\rhd 0\},
$$
where $(X,k_X)\sim(Y,k_Y)$ means that $X$ and $Y$ are connected by a lightlike geodesic and $k_Y$ is the parallel transport of $k_X$ from $X$ to $Y$ along said geodesic, whereas $\pm k_X\rhd 0$ means that the covector $\pm k_X$ is future pointing.
 \end{definition}

\subsection{Construction of Hadamard states}
\label{sec:proof1}

An abstract characterisation of quasifree states on a generic CAR algebra was obtained by Araki~\cite{araki}.
\begin{theorem}
\label{Araki's Theorem}
Let $\Theta$ be an involution on a Hilbert space $\scrH$ and let $Q\in \mathscr{B}(\scrH)$ be such that
\begin{gather}\label{Araki's cond}
0\leq Q=Q^*\leq 1,\qquad
Q+\Theta Q\Theta = \text{Id}_\scrH.
\end{gather}
Then the identity
\begin{gather}\label{Araki's cond I}
\omega\big(\Xi(\Theta \Phi_1),\Xi(\Phi_2)\big)=\scalar{\Phi_1}{Q\Phi_2}_\oplus\qquad\forall \Phi_1,\Phi_2\in \scrH,
\end{gather}
defines a quasifree state on the CAR algebra $\mathcal{A}$ generated by elements of $\scrH$. Conversely, for every quasifree states $ \omega$ on $\mathcal{A}$ there exists a bounded linear operator  $Q$ on $\scrH$ such that  \eqref{Araki's cond} and \eqref{Araki's cond I} are satisfied.
\end{theorem}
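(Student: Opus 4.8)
The plan is to prove the two implications separately, starting with the converse, which is the easier one.

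\emph{Converse.} Let $\omega$ be a quasifree state on $\mathcal A$ and set $\omega_2(\Phi_1,\Phi_2):=\omega\big(\Xi(\Phi_1)\Xi(\Phi_2)\big)$; by Definition~\ref{def:quasifree} the whole state is encoded in $\omega_2$. Consider on $\scrH$ the form $B(\Phi_1,\Phi_2):=\omega\big(\Xi(\Phi_1)^*\Xi(\Phi_2)\big)=\omega_2(\Theta\Phi_1,\Phi_2)$, which is sesquilinear because $\Xi$ is linear and $\Theta$ antilinear. Putting $\Phi_1=\Phi_2=\Phi$ in the anticommutation relations of Definition~\ref{def:alg Dirac sol} gives $\Xi(\Phi)\Xi(\Phi)^*+\Xi(\Phi)^*\Xi(\Phi)=\scalar{\Phi}{\Phi}\,1_{\cal A}$, and evaluating $\omega$ (which is positive, so both summands map into $[0,\infty)$) yields $0\le B(\Phi,\Phi)=\omega\big(\Xi(\Phi)^*\Xi(\Phi)\big)\le\|\Phi\|^2$. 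Hence $B$ is a bounded positive Hermitian form and, by Riesz, $B(\Phi_1,\Phi_2)=\scalar{\Phi_1}{Q\Phi_2}$ for a unique $Q\in\mathscr B(\scrH)$ with $0\le Q=Q^*\le 1$, which is \eqref{Araki's cond I} and the first relation in \eqref{Araki's cond}. For the second relation, feed the anticommutator $\Xi(\Theta\Phi_1)\Xi(\Phi_2)+\Xi(\Phi_2)\Xi(\Theta\Phi_1)=\scalar{\Theta\Phi_1}{\Theta\Phi_2}\,1_{\cal A}$ into $\omega$ and rewrite both terms through $Q$ using the definition of $B$, $\Theta^2=\mathrm{Id}$ and antiunitarity of $\Theta$; after the routine bookkeeping of complex conjugations this gives exactly $Q+\Theta Q\Theta=\mathrm{Id}_\scrH$.

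\emph{Direct implication.} Here the substantive point is positivity of the functional defined by \eqref{Araki's cond I}. The plan is to build a genuine GNS-type triple by hand via an antisymmetric (fermionic) Fock-space construction in the spirit of Araki--Wyss. Since $0\le Q\le 1$, the operators $C:=Q^{1/2}$ and $S:=(\mathrm{Id}-Q)^{1/2}$ are bounded and self-adjoint with $C^2+S^2=\mathrm{Id}$. One first purifies $Q$: using $Q+\Theta Q\Theta=\mathrm{Id}$ one builds, on a doubled one-particle space (for instance $\scrH\oplus\scrH$ equipped with a suitably extended antiunitary involution), a \emph{basis projection} compressing to $Q$; let $\mathcal F_{\mathrm a}$ be the antisymmetric Fock space over its range and $\Omega$ its vacuum. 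On $\mathcal F_{\mathrm a}$ one defines $\pi(\Xi(\Phi))$ as the sum of a creation operator built from $S\Phi$ and an annihilation operator built from $C\Phi$ and $\Theta$. One then checks that $\pi$ respects every relation of Definition~\ref{def:alg Dirac sol}: linearity is immediate; the canonical anticommutation relations follow from the CAR of the Fock creation/annihilation operators together with $C^2+S^2=\mathrm{Id}$; and the Hermiticity relation $\pi(\Xi(\Phi))^*=\pi(\Xi(\Theta\Phi))$ follows from $Q+\Theta Q\Theta=\mathrm{Id}$ and the chosen doubling. Thus $\pi$ extends to a $*$-representation of $\mathcal A$, and $\omega(a):=\scalar{\Omega}{\pi(a)\Omega}$ is linear, satisfies $\omega(1_{\cal A})=\|\Omega\|^2=1$, and $\omega(a^*a)=\|\pi(a)\Omega\|^2\ge0$ --- that is, a state.

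It then remains to identify this $\omega$ with the one in the statement. Computing $\scalar{\Omega}{\pi(\Xi(\Theta\Phi_1))\pi(\Xi(\Phi_2))\Omega}$ --- only the contraction of one creation against one annihilation survives on the vacuum --- reproduces $\scalar{\Phi_1}{Q\Phi_2}$, i.e.\ \eqref{Araki's cond I}; and since $\Omega$ is the Fock vacuum, fermionic Wick's theorem yields the higher $n$-point functions of $\omega$ as the signed sum over pairings of Definition~\ref{def:quasifree}, so $\omega$ is quasifree, and a quasifree state with a given two-point function is unique. I expect the main obstacle to be precisely the construction of $\pi$ in the direct implication: one must pick the doubled one-particle space, the extension of $\Theta$, and the precise formula for $\pi(\Xi(\Phi))$ so that \emph{simultaneously} the anticommutation relations close with the correct right-hand side $\scalar{\Phi_1}{\Phi_2}\,1_{\cal A}$ \emph{and} the $\Theta$-Hermiticity relation holds --- it is the compatibility identity $Q+\Theta Q\Theta=\mathrm{Id}$, not the bound $0\le Q\le1$ alone, that makes this possible, and pinning down the conjugations and the action of $\Theta$ on the doubled space is the delicate part. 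Everything else (the boundedness estimates, polarization, the vacuum expectation computation, Wick's theorem) is routine.
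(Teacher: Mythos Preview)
The paper does not prove this theorem: it is stated as a quotation of Araki's classical result, with a citation to \cite{araki} and no argument given. So there is nothing in the paper to compare your proposal against.

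That said, your sketch is a faithful outline of the standard proof one finds in Araki's original paper and in later expositions. The converse is handled correctly: positivity of $\omega$ together with the CAR yields the bound $0\le B(\Phi,\Phi)\le\|\Phi\|^2$, Riesz produces $Q$, and the anticommutator identity combined with antiunitarity of $\Theta$ gives $Q+\Theta Q\Theta=\mathrm{Id}$. For the direct implication you correctly identify the Araki--Wyss doubling/Fock-space construction as the engine for positivity, and you are right that the delicate point is setting up the doubled one-particle space and the formula for $\pi(\Xi(\Phi))$ so that both the CAR and the $\Theta$-Hermiticity close simultaneously, with $Q+\Theta Q\Theta=\mathrm{Id}$ being the compatibility that makes this work. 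If you were to flesh this out, the one place to be careful is that your $\Theta$ is antilinear but need not be antiunitary in general; in the paper's concrete setting \eqref{invol} it is, and this is implicitly used when you invoke ``antiunitarity of $\Theta$'' and when you pass complex conjugations across $\Theta$ in the anticommutator computation.
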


\begin{definition} 
We call {\em basis projection} any operator $\Pi$ on $\scrH$ satisfying conditions \eqref{Araki's cond}. 
\end{definition}

Let now $\scrH^\oplus$ the Hilbert space defined in~\eqref{Hilb sol} and let $\Theta$ the involution defined in~\eqref{invol}. As an immediate corollary, we observe that to construct a basis projection $\Pi$ for $\scrH^\oplus$ it is enough to construct an orthonormal projector $\Pi$ on the pre-Hilbert space $\sol_{sc}(\overline\Dir_\M)$.

\begin{corollary}\label{Araki's corollary}
Let $\Upsilon$ be the adjunction map defined as in Section~\ref{sec:Dirac}
and $\Pi$ a orthonormal projector on the pre-Hilbert space $\sol_{sc}(\overline\Dir_\M)$. Then the operator
$P := \Pi \oplus (\text{Id}_\scrH - \Upsilon \Pi\Upsilon^{-1} )$
is a basis projection on $\scrH^\oplus$. 
\end{corollary}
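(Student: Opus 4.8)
The plan is to verify directly that $P$ satisfies the two defining conditions \eqref{Araki's cond} of a basis projection on $\scrH^{\oplus}$; the assertion will then follow from the definition of basis projection together with Araki's Theorem~\ref{Araki's Theorem}. Recall that $\scrH^{\oplus}$ is the completion of $\sol_{sc}(\overline{\Dir}_{\M})\oplus\Upsilon\,\sol_{sc}(\overline{\Dir}_{\M})$ with respect to the direct-sum scalar product $\scalar{\cdot}{\cdot}_{\oplus}$ induced by \eqref{eq:Herm prod}, and that the two summands are, by construction, mutually orthogonal closed subspaces.

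The first preliminary step I would carry out is to record that the adjunction map $\Upsilon$ of Section~\ref{sec:Dirac} restricts to a conjugate-linear isometric bijection $\sol_{sc}(\overline{\Dir}_{\M})\to\Upsilon\,\sol_{sc}(\overline{\Dir}_{\M})$. Fibrewise, $\Upsilon$ is a conjugate-linear isometry for the $\Spin_{0}(1,3)$-invariant spin inner product, and combining this with the (conjugate-)linearity properties of the pairing in \eqref{eq:Herm prod} shows that $\Upsilon$ is anti-unitary between the two pre-Hilbert summands; it therefore extends to an anti-unitary map between their completions, so that $\Upsilon^{-1}=\Upsilon^{*}$ in the sense of conjugate-linear adjoints. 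With this in hand, the first condition $0\le P=P^{*}\le 1$ is straightforward: since $\Pi$ is an orthonormal projector, $\Pi=\Pi^{2}=\Pi^{*}$, and using the composition rule $(ABC)^{*}=C^{*}B^{*}A^{*}$ (valid also when $A,C$ are antilinear and $B$ is linear) together with $\Upsilon^{*}=\Upsilon^{-1}$ one gets $(\Upsilon\Pi\Upsilon^{-1})^{*}=\Upsilon\Pi\Upsilon^{-1}$ and $(\Upsilon\Pi\Upsilon^{-1})^{2}=\Upsilon\Pi^{2}\Upsilon^{-1}=\Upsilon\Pi\Upsilon^{-1}$. Hence $\Upsilon\Pi\Upsilon^{-1}$ is an orthogonal projection on $\overline{\Upsilon\,\sol_{sc}(\overline{\Dir}_{\M})}$, and so is its complement $\mathrm{Id}_{\scrH}-\Upsilon\Pi\Upsilon^{-1}$; being the orthogonal direct sum of two orthogonal projections acting on orthogonal summands, $P$ (after the obvious extension by continuity to $\scrH^{\oplus}$) is itself an orthogonal projection, which yields $0\le P=P^{*}\le 1$.

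For the second condition $P+\Theta P\Theta=\mathrm{Id}_{\scrH^{\oplus}}$ I would substitute the definition \eqref{invol} of $\Theta$ and compute on an arbitrary vector $\Phi_{1}\oplus\Upsilon\Phi_{2}$: applying $\Theta$, then $P$ (using $\Upsilon^{-1}\Upsilon=\mathrm{Id}$ to collapse the middle conjugation), and then $\Theta$ once more; comparing with $P(\Phi_{1}\oplus\Upsilon\Phi_{2})$ one sees that the two contributions are complementary in each summand, so that their sum reduces slotwise to $\Pi+(\mathrm{Id}-\Pi)=\mathrm{Id}$ and returns $\Phi_{1}\oplus\Upsilon\Phi_{2}$. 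The only subtlety here is to track the conjugate-linearity of $\Theta$ and $\Upsilon$ correctly, though since the scalar signs introduced by $\Theta$ are real they pass through the computation harmlessly. Once both relations in \eqref{Araki's cond} are established, $P$ is a basis projection on $\scrH^{\oplus}$, as claimed.

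I expect the main obstacle to be purely a matter of careful bookkeeping with antilinearity: using the conjugate-linear notion of adjoint consistently, remembering that a composition of an even number of antilinear maps is linear, and in particular checking rigorously that $\Upsilon$ is a genuine anti-isometry between the two summands (the step that actually uses the concrete structure of the spinor bundle and the invariance of the spin inner product). All remaining manipulations are elementary algebra of orthogonal projections.
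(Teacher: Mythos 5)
Your plan — verify the two conditions in \eqref{Araki's cond} directly — is the right one, and in fact the paper gives no proof of this corollary at all, so there is no separate paper argument to compare against. Your handling of the first condition is sound: $\Upsilon$ is a conjugate-linear isometry between the two summands by the $\Spin_0(1,3)$-invariance of the fibre product, hence $\Upsilon\Pi\Upsilon^{-1}$ and its complement are orthogonal projections on $\overline{\Upsilon\,\sol_{sc}(\overline\Dir_\M)}$, and $P$ is an orthogonal projection on $\scrH^\oplus$, giving $0\le P=P^*\le 1$.

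Where you are too cavalier is the second condition. You write that ``the scalar signs introduced by $\Theta$ are real they pass through the computation harmlessly'' — but if you actually substitute the formula \eqref{invol} as printed, $\Theta(\Phi_1\oplus\Upsilon\Phi_2)=(-\Phi_2)\oplus\Upsilon\Phi_1$, and compute on $\Phi_1\oplus\Upsilon\Phi_2$, you get
\begin{align*}
P(\Phi_1\oplus\Upsilon\Phi_2)&=\Pi\Phi_1\oplus\Upsilon(\Phi_2-\Pi\Phi_2),\\
\Theta P\Theta(\Phi_1\oplus\Upsilon\Phi_2)&=(\Pi\Phi_1-\Phi_1)\oplus\bigl(-\Upsilon\Pi\Phi_2\bigr),
\end{align*}
whose sum is $(2\Pi\Phi_1-\Phi_1)\oplus\bigl(\Upsilon\Phi_2-2\Upsilon\Pi\Phi_2\bigr)$, \emph{not} $\Phi_1\oplus\Upsilon\Phi_2$. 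The sign does not pass through harmlessly: it spoils the complementarity you invoke. The resolution is to notice that the formula \eqref{invol} cannot be right as printed: with the minus sign one checks directly that $\Theta^2=-\mathrm{Id}$, so $\Theta$ is not an involution — contradicting both the text preceding \eqref{invol} (``the antilinear involution'') and the hypothesis of Araki's Theorem~\ref{Araki's Theorem}. Dropping the minus sign, $\Theta(\Phi_1\oplus\Upsilon\Phi_2):=\Phi_2\oplus\Upsilon\Phi_1$, one has $\Theta^2=\mathrm{Id}$, and then the slotwise computation closes exactly as you anticipate: $\Theta P\Theta(\Phi_1\oplus\Upsilon\Phi_2)=(\Phi_1-\Pi\Phi_1)\oplus\Upsilon\Pi\Phi_2$, so $P+\Theta P\Theta=\mathrm{Id}_{\scrH^\oplus}$. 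You should carry out this short computation explicitly rather than asserting it in outline, and you should flag the sign in \eqref{invol} as a typo; otherwise a careful reader following your sketch with the paper's literal $\Theta$ will conclude, incorrectly, that the corollary is false.
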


Corollary~\ref{Araki's corollary} is the linking point between Hadamard states and the results from the rest of the paper. Indeed,  we observe that the pseudodifferential projections from Theorem~\ref{theorem pseudodifferential projections} can be modified,  by adding infinitely smoothing operators,  in such a way that conditions \eqref{pseudodifferential projections equation 1}, \eqref{pseudodifferential projections equation 2},  and \eqref{pseudodifferential projections equation 4}--\eqref{pseudodifferential projections equation 6} are satisfied exactly, not only modulo $\Psi^{-\infty}$. To see this, consider the operator\footnote{Recall that we assume pseudodifferential projections to be self-adjoint.}
\begin{equation}
\label{opeartor exact projections}
\mathcal{P}(t):=P_+(t)+2P_-(t).
\end{equation}
The operator \eqref{opeartor exact projections} is a self-adjoint pseudodifferential operator of order zero. It has two points of essential spectrum, $1$ and $2$,  plus possibly isolated eigenvalues of finite multiplicity.
Let $\Gamma_j(t)$, $j=1,2$, be positively oriented contours in the complex plane chosen in such a way that
\begin{itemize}
\item $\Gamma_j(t)$ encircles the point $j$,
\item $\Gamma_j(t)$ does not intersect any isolated eigenvalue of finite multiplicity of $\mathcal{P}(t)$ and
\item $\Gamma_1(t) \cup \Gamma_2(t)$ encircles the whole spectrum of $\mathcal{P}(t)$.
\end{itemize}
Then, by the elementary properties of Riesz projections,  the operators
\begin{equation*}
\widetilde{P}_+(t):=\frac{1}{2\pi i}\int_{\Gamma_1}(\mathcal{P}(t)-\lambda\, \mathrm{Id})^{-1}\,d\lambda,
\end{equation*}
\begin{equation*}
\widetilde{P}_-(t):=\frac{1}{2\pi i}\int_{\Gamma_2}(\mathcal{P}(t)-\lambda\, \mathrm{Id})^{-1}\,d\lambda,
\end{equation*}
satisfy conditions \eqref{pseudodifferential projections equation 1}, \eqref{pseudodifferential projections equation 2},  and \eqref{pseudodifferential projections equation 4}--\eqref{pseudodifferential projections equation 6} exactly,
and
\begin{equation*}
\widetilde{P}_\pm(t)=P_\pm(t) \mod \Psi^{-\infty}.
\end{equation*}
The latter implies, in particular, that $\widetilde{P}_\pm$ satisfy \eqref{pseudodifferential projections equation 3}. Condition \eqref{pseudodifferential projections equation 3} cannot, in general, be satisfied exactly by adding infinitely smoothing corrections.

\begin{definition}
\label{definition left and right projections}
We define $P_{\pm,L}(t)$ and $P_{\pm,R}(t)$ to be families of pseudodifferential operators given by Theorem~\ref{theorem pseudodifferential projections} for $A_t=\overline \Dir_t$ and  $A_t=-\overline  \Dir_t$, respectively,  (compare \eqref{eq:Cauchy probl U} and \eqref{eq:Cauchy probl U_R}) modified in such a way that they satisfy conditions \eqref{pseudodifferential projections equation 1}, \eqref{pseudodifferential projections equation 2},  and \eqref{pseudodifferential projections equation 4}--\eqref{pseudodifferential projections equation 6} exactly and we set 
$$\Pi_\pm(t):=\begin{pmatrix}
P_{\pm,L}(t) & 0\\
0 & P_{\pm,R}(t)
\end{pmatrix}\,.$$
\end{definition}

We finally have at our disposal all the ingredients to prove Theorem~\ref{thm:Hadamard}.

\begin{proof}[Proof of Theorem~\ref{thm:Hadamard}.]
On account of the properties of the operators $P_{\pm,L}(t)$,  $P_{\pm,R}(t)$ it follows that $\Pi_\pm$ are pseudodifferential orthonormal projection, whose full symbol is explicitly determined by the full symbols of $P_{\pm,L}(t)$ and $P_{\pm,R}(t)$, in turn obtained via the algorithm from Subsection~\ref{Algorithmic construction}.  Formula~\eqref{eq:Cauchy surf cov} is then obtained by applying Corollary~\ref{Araki's corollary}. That $\tilde\lambda^\pm$ are of Hadamard form follows at once from Proposition~\ref{proposition stationary points of the Levi-Civita phase functions} and~\cite[Proposition 3.8]{gerardDirac}. 
\end{proof}

\begin{remark}
Different ways of distributing the isolated eigenvalues of $\mathcal{P}$ between $\Gamma_1$ and $\Gamma_2$ yield different orthogonal projections and, ultimately, different Hadamard states.
\end{remark}

\subsection{Construction of Feynman propagator}
\label{sec:proof2}

\begin{proof}[Proof of Theorem~\ref{thm:Feynman}]
Let $U^{(\pm)}:=U^{(\pm)}_L\oplus U^{(\pm)}_R$ be the positive/negative  Dirac propagators for the reduced  Dirac operator.  The combination of Proposition~\ref{proposition stationary points of the Levi-Civita phase functions} with~\cite[Proposition 3.8]{gerardDirac} implies that the wavefront set of  $U^{(\pm)}$ is of the Hadamard form.  Then it is not hard to check that  
\[
\G_F(t,s):= U^{(+)}(t,s) - \theta(s-t) U(t,s)
\]
is a Feynman propagator,  since $-\theta(s-t) U(t,s)$ is the `time kernel' of the advanced Green operator. 
\end{proof}

\color{black}

\end{document}